\documentclass[11pt,reqno]{amsart}
\usepackage{amsmath, amssymb, amsthm}
\usepackage{url}
\usepackage[breaklinks]{hyperref}
\usepackage{hyperref}
\usepackage{amssymb}
\usepackage{xcolor}

\usepackage{a4wide}

\renewcommand{\Im}{\operatorname{Im}}

\renewcommand{\Re}{\operatorname{Re}}
\renewcommand{\Im}{\operatorname{Im}}

\newcommand{\ZZ}{\mathbb{Z}}
\newcommand{\SL}{\operatorname{SL}}

\renewcommand{\(}{\left\(}
\renewcommand{\)}{\right\)}
\renewcommand{\[}{\left\[}
\renewcommand{\]}{\right\]}

\numberwithin{equation}{section}
\theoremstyle{plain}
\newtheorem{theorem}{Theorem}[section]
\newtheorem{lemma}[theorem]{Lemma}
\newtheorem{remark}[]{Remark}
\newtheorem{definition}[theorem]{Definition}

\newtheorem{corollary}[theorem]{Corollary}
\newtheorem{proposition}[theorem]{Proposition}
\newtheorem{example}[]{Example}
\usepackage{amsmath}
\usepackage{mathtools}
\usepackage{enumitem}

  \title{Riesz means of quadratic class numbers}
\author{Olivia Beckwith}
\address{Department of Mathematics, Tulane University, New Orleans, LA 70118}
\email{obeckwith@tulane.edu}

\author{Tushar Karmakar}
\address{Department of Mathematics, Tulane University, New Orleans, LA 70118}
\email{tkarmakar@tulane.edu}

\makeatletter
\@namedef{subjclassname@2020}{%
	\textup{2020} Mathematics Subject Classification}
\makeatother

\begin{document}
	\maketitle

	\begin{abstract}
	We prove an asymptotic formula for a weighted Riesz mean of Hurwitz class numbers and real quadratic class numbers.	 To do this, we introduce $L$-functions for weight $\frac{1}{2}$ sesquiharmonic Maass forms of moderate growth and prove a formula for the Riesz means of the corresponding generalized mock modular forms, generalizing a recent result of the first author with Diamantis, Gupta, Rolen, and Thalagoda for mock modular forms. We then apply this formula to a sesquiharmonic Maass form that was first introduced by Duke, Imamo\={g}lu, and T\'{o}th. 
		\end{abstract}

\section{Introduction}
 The study of the growth of class numbers of quadratic number fields dates back to Gauss' conjectures. In the case of negative discriminants, Gauss Conjectures' were resolved throughout the twentieth century in several celebrated works \cite{GoldfeldGrossZagier, GrZa, Heegner, Heilbronn, Siegel, Stark}, while for positive discriminants, Gauss' conjectures remain open. 
 
 The richness of the study of class numbers owes in part to the connections between class numbers and other important objects in analytic number theory, especially $L$-functions and modular forms. The works of Heilbronn \cite{Heilbronn} and Siegel \cite{Siegel} relied on Dirichlet's class number formula for the value of quadratic Dirichlet $L$-series at 1. The connections to modular forms go back to Gauss, who related class numbers to coefficients of the third power of the Jacobi theta function. More precisely, these coefficients are given in terms of Hurwitz class numbers $H(n)$, which are defined in terms of binary quadratic forms of negative discriminant and have been studied extensively in the literature: their recursive relations such as the Kronecker-Hurwitz relations \cite{CohenClassNumbers}, traces of Hecke operators acting spaces of modular forms \cite{mertens2016eichler}, divisibility properties \cite{beckwith2017indivisibility, beckwith2020class, ankeny1955divisibility, ono1999indivisibility, BRR3}, elliptic curves \cite{Schoof}, and mathematical physics \cite{Manschot}.
 
Many of these works rely the mock modularity of the generating function of $H(n)$, known due to work by Zagier \cite{zagier75}. Unlike the usual modular forms, mock modular forms lack the usual modular symmetry, but instead have nonholomorphic modular completions called harmonic Maass forms, which consist of a sum of a holomorphic form and a nonholomorphic generating series involving the incomplete gamma function. The study of such functions can be traced back to back to Ramanujan's seminal discovery of mock theta functions, but was developed in its modern formulation in the works of Zwegers \cite{Zwegers} and Bruinier and Funke \cite{brufu02}. An overview of the theory of harmonic Maass forms and their applications in number theory, partition statistics, representation theory, and various areas of mathematical physics can be found in \cite{thebook}. 

Positive discriminant analogues $\mathrm{Tr_d(1)}$ of Hurwitz class numbers were introduced by Duke, Imamo\={g}lu and T\'{o}th \cite{DukeImamogluToth} and shown to be coefficients of the holomorphic part of a nonholomorphic modular form belonging to the so-called class of sesquiharmonic Maass forms. These functions have since appeared in the setting of the smallest parts function in partition theory \cite{Bringmann-2008}, non-critical values of modular $L$-functions \cite{BDR}, and theta lifts \cite{BFI}. While the Fourier expansions of harmonic Maass forms involving two infinite generating series (a holomorphic series and a generating series involving the incomplete gamma function), the Fourier series for sesquiharmonic Maass forms include a third generating series involving an additional special function. 

In this paper, we focus on the class of weight $\frac{1}{2}$ sesquiharmonic Maass forms of moderate growth (see the next section for precise definitions). This class includes the function $\hat{Z}(\tau)$ defined in \cite{DukeImamogluToth} and the higher level generalizations introduced in \cite{BeckwithMono-I}. They were used in \cite{allen-beckwith-sharma} to give a half-integral weight Eisenstein analogue of a result of \cite{mertens2016} on shifted convolution sums. Functions in this class have Fourier expansions of the form
\begin{align}\label{eq:fourier}
	F(\tau)&= \left(d_F(0) +d_F(1)\log v+d_F(2)y^{\frac{1}{2}} \right)+ \sum_{\substack{n >0} } {c_F(n) q^{n}} \nonumber \\
&+ \sum_{\substack{n < 0} } {b_F(n) \beta(-4nv)q^n} + \sum_{n \ge 1}{a_F(n)\alpha(4nv)q^n},
 \end{align}
where throughout we use the standard notation $\tau = u + iv \in \mathbb{H}$, $q= e^{2 \pi i \tau}$, and for $y > 0$, the special functions $\alpha(y)$ and $\beta (y)$ are given by
\begin{align}\label{expression-alphabeta}
\alpha{(y)} := \frac{\sqrt{y}}{4\pi} {\int_{0}^\infty t^{-1/2}e^{-{\pi} yt}\log(1+t)dt}, \quad  \beta{(y)} := \frac{1}{\sqrt{\pi}} \int_{{\pi}y}^\infty {t^{1/2 -1} e^{-t}} dt.
\end{align}

A central feature of the classical theory of holomorphic modular forms is the associated $L$-series, whose analytic continuation and functional equation and (for Hecke eigenforms) Euler products are useful in studying the Fourier coefficients and related sums. Nonholomorphic generating series pose an obstruction to these methods. The problem of defining $L$-functions for harmonic Maass forms has been studied recently by \cite{ShanSingh22}, who restricted to the class of harmonic Maass forms of moderate growth, and Diamantis, Lee, Raji, and Rolen \cite{DLRR}, who employed test functions to deal with both the nonholomorphic obstruction and the exponential growth of harmonic Maass forms. Here we extend the method of Shankadhar and Singh and define $L$-functions for sesquiharmonic Maass forms of weight $\frac{1}{2}$ with moderate growth. With $F$ given as in \eqref{eq:fourier}, we define the completed Dirichlet series $\Lambda_{N}(F,s)$ associated to $F$ as follows:
\begin{align}\label{eq:complete-L-function}
\Lambda_{N}(F,s)= \left(\frac{\sqrt{N}}{2 \pi}\right)^{s} \left[\Gamma(s)L^{+}(F,s) + \frac{1}{\sqrt{\pi}} W_{\frac{1}{2}}(s)L^{-}(F,s)+ Z(s)L^{*}(F,s)\right],
\end{align}
where the special functions $W_{\frac{1}{2}}(s)$ and $Z(s)$ are defined in \eqref{eq:W-def} and \eqref{eq:Z-def} in the next section section, and
$$L^{+}(F,s)= \sum_{n=1}^{\infty} \frac{c_{F}(n)}{n^{s}}, \quad L^{-}(F,s)= \sum_{n=1}^{\infty} \frac{b_{F}(-n)}{n^{s}}, \quad L^{*}(F,s)= \sum_{n=1}^{\infty} \frac{a_{F}(n)}{n^{s}}.$$
We show that these series converge absolutely for sufficiently large $Re(s)$.

Our first main result is the meromorphic continuation and functional equation of $\Lambda_N (F,s)$. The residues and functional equation involves the function
$$
G (\tau ) = F |_{\frac{1}{2}} W(N) := N^{\frac{1}{4}} (N \tau)^{-\frac{1}{2}} F(-1/N \tau).
$$
As we will prove in the next section, $G(\tau)$ also has a Fourier expansion of the form \eqref{eq:fourier}, and we denote the coefficients of $G(\tau)$ using the same notation, with the subscript $G$. 

\begin{theorem}\label{thm:functional-equation}
Let $F \in V^{mg}_{\frac{1}{2}}(N,\chi )$ and let $G = F |_{\frac{1}{2}} W(N)$. Then $\Lambda_{N}(F,s)$ has a meromorphic continuation to the whole complex plane with simple poles at $s = -\frac{1}{2}$ and $1$, with residues $-N^{-\frac{1}{4}}d_{F}(2)$ and $ i^{\frac{1}{2}}  N^{-\frac{1}{4}}d_{G}(2)$ respectively. Additionally, $\Lambda_N(F,s)$ has a pole at $0$ and a pole at $\frac{1}{2}$ with residues $(-d_{F}(0) + d_{F}(1) \log {\sqrt{N}})$ and $i^{\frac{1}{2}} ( d_{G}(0) -d_{G}(1) \log {\sqrt{N}})$, respectively. The pole at $0$ is a double pole unless $d_F(1) = 0$, and the pole at $\frac{1}{2}$ is a double pole unless $d_G(1) = 0$. 
Moreover, we have the functional equation
\begin{align}\label{function-equation2}
 \Lambda_{N}(F,s)= i^{\frac{1}{2}} \Lambda_{N}\left(G,\frac{1}{2}-s\right).
\end{align}
\end{theorem}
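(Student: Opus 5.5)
The plan is to follow Hecke's method, in the form Shankadhar and Singh used for harmonic Maass forms of moderate growth. The idea is to express $\Lambda_N(F,s)$ as a Mellin transform along the imaginary axis of $F$ minus its constant term, and then split the integral at the fixed point $v = 1/\sqrt{N}$ of $W(N)$.

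\textbf{Step 1: termwise Mellin transforms.} For $\Re(s)$ large I would compute the Mellin transform of each of the three Fourier series in \eqref{eq:fourier}, restricted to $\tau = iv$. For the holomorphic part,
$$\int_0^\infty c_F(n)e^{-2\pi n v}v^{s-1}\,dv = c_F(n)(2\pi n)^{-s}\Gamma(s).$$
For the nonholomorphic parts, substituting $v \mapsto v/(2\pi n)$ turns the transforms of $\beta(4nv)e^{2\pi n v}$ and $\alpha(4nv)e^{-2\pi n v}$ into $(2\pi n)^{-s}$ times universal functions of $s$. These should be exactly $\pi^{-1/2}W_{\frac12}(s)$ and $Z(s)$, as defined in \eqref{eq:W-def} and \eqref{eq:Z-def}. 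This is where the normalizations in \eqref{eq:complete-L-function} come from.

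To interchange sum and integral, I need absolute convergence. I will use the coefficient bounds for moderate-growth forms, namely polynomial growth of $c_F(n)$, $b_F(-n)$ and $a_F(n)$. The asymptotics $\beta(y)\ll y^{-1/2}e^{-\pi y}$ and $\alpha(y)\ll y^{-1/2}$ as $y\to\infty$ give exponential decay of each term. Together these give
$$\Lambda_N(F,s)=\int_0^\infty \big(F(iv)-d_F(0)-d_F(1)\log v-d_F(2)v^{1/2}\big)\,(\sqrt N v)^{s}\,\frac{dv}{v}.$$
To confirm the normalization $(\sqrt N/2\pi)^s$, I rescale $v\mapsto \sqrt N v$ or absorb the factor directly.

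\textbf{Step 2: split and transform.} I split the integral at $v=1/\sqrt N$. On $(1/\sqrt N,\infty)$ the integrand decays exponentially, so that piece is entire. On $(0,1/\sqrt N)$ I substitute $v\mapsto 1/(Nv)$. The relation $F(i/(Nv)) = N^{-1/4}(iNv)^{1/2}G(iv)$, which comes from $G=F|_{\frac12}W(N)$ and the involution property of $W(N)$, introduces the factor $i^{1/2}$. I then subtract the constant term of $G$, namely $d_G(0)+d_G(1)\log v+d_G(2)v^{1/2}$, so that the remaining integral is again entire. What is left is a set of elementary integrals, computed on the appropriate half-planes:
$$\int_1^\infty w^{a}\,dw/w=-1/a,\qquad \int_1^\infty w^{a}\log w\,dw/w = 1/a^2.$$

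Collecting terms should give:
\begin{itemize}
\item simple poles at $s=-\tfrac12$ and $s=1$ from the $v^{1/2}$ terms of $F$ and $G$ respectively;
\item a pole at $s=0$ from $d_F(0)$ and $d_F(1)\log v$, with the $\log v$ term producing a double pole;
\item a pole at $s=\tfrac12$ from $d_G(0)$ and $d_G(1)\log v$, again double when $d_G(1)\neq0$.
\end{itemize}
The $\log\sqrt N$ terms in the residues arise from writing $\log v=\log(\sqrt N v)-\log\sqrt N$.

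\textbf{Step 3: functional equation.} The resulting expression is a sum of entire integrals and rational terms. Applying the same computation with the roles of $F$ and $G$ exchanged, using $G|W(N)=F$ up to the appropriate sign and power of $i$, should give the same expression with $s\mapsto \frac12-s$ times $i^{-1/2}$. This yields \eqref{function-equation2}.

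\textbf{Main obstacle.} I expect the delicate part to be Step 1 for the $\alpha$-term. It requires verifying that the Mellin transform of $\alpha(4v)e^{-2\pi v}$ converges and matches $Z(s)$ with the right constant, and controlling $\alpha$ near $0$, where $\alpha(y)\sim c\sqrt y$ up to a logarithm. Getting the branch of $i^{1/2}$ and the exact residue constants consistent (for example $-N^{-1/4}d_F(2)$) will also require careful bookkeeping of the factor $(\sqrt N)^s$ at the split point.
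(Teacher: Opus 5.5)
Your proposal is correct and follows the paper's proof essentially step for step. You identify $\Lambda_N(F,s)$ with $\int_0^\infty F^*(it/\sqrt N)\,t^{s-1}\,dt$ via termwise Mellin transforms, split at the fixed point $t=1$ of $W(N)$, and invert using $F(i/(Nv))=N^{-1/4}(iNv)^{1/2}G(iv)$. You then subtract the constant term of $G$ to read off the poles at $-\frac12,0,\frac12,1$, and swap the roles of $F$ and $G$ using $G|_{\frac12}W(N)=i^{-1}F$ to get the functional equation.

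One harmless slip in your side remark: near $0$ one has $\alpha(y)\sim -\frac{1}{4\pi}\log y$, not $c\sqrt y$ up to a logarithm. Since Step 1 is only needed for $\Re(s)$ large, this does not affect the argument.
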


We use these $L$-functions to prove a summation formula for coefficients of sesquiharmonic Maass forms, given below in Theorem \ref{thm:sumformula-sesqui}. Our method extends a recent preprint \cite{bdgrt}, who built on an older paper of Chandrasekharan and Narasimhan \cite{ChandNaras61}. In \cite{ChandNaras61}, a summation formula was obtained for a general class of Dirichlet series by applying Perron's formula and evaluating the contour integrals appearing explicitly in terms of Bessel functions. When applied to mock modular forms as in \cite{bdgrt}, the nonholomorphic part of the associated harmonic Maass forms contributes an additional term to the formula. In the case of sesquiharmonic Maass forms, we find that two additional terms appear in the formula, coming from the two non-holomorphic generating series in \eqref{eq:fourier}.  As a corollary of Theorem \ref{thm:sumformula-sesqui}, we obtain the following asymptotic formula for a certain weighted average of Hurwitz class numbers and the positive discriminant quadratic traces $\mathrm{Tr_d(1)}$ introduced by \cite{DukeImamogluToth}: 
\begin{theorem} \label{thm:asymptotic-formula-example}
For $\rho > \frac{3}{2}$ and any $\epsilon>0$, we have 
\begin{align}\label{eq:asymptotic-formula for h*(n)}
\sum_{n\leq x} \left(1-\frac{n}{x}\right)^{\rho} &\left( \mathrm{Tr_{n}(1)} + \frac{ H(n) ((x/n) -1)^{\frac{1}{2}}  }{\sqrt{2n} (\rho + \frac{1}{2})} { }_2F_1\left(1, \frac{1}{2}, \rho + \frac{3}{2}; \frac{1}{2} \left( 1 - \frac{x}{n} \right) \right)  \right) \nonumber \\
& = \frac{ \pi x }{6 \sqrt{2} (\rho+1)} + \mathcal{O}(x^{\frac{1}{2}+\epsilon}),
\end{align}
where $ { }_2F_1$ is the hypergeometric function given by the Gauss series:
$$
{ }_2F_1 ( a,b,c;z) := \frac{\Gamma(c)}{\Gamma(a) \Gamma(b)}  \sum_{s=0}^{\infty} \frac{\Gamma(a+s) \Gamma(b+s) }{\Gamma (c+s )} z^s .
$$
\end{theorem}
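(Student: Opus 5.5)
We will deduce Theorem~\ref{thm:asymptotic-formula-example} from the general Riesz-mean summation formula (Theorem~\ref{thm:sumformula-sesqui}), applied to the Duke--Imamo\={g}lu--T\'oth sesquiharmonic form $\hat{Z}(\tau)$ of weight $\frac12$ on $\Gamma_0(4)$ in the Kohnen plus space. The first step is to record the Fourier expansion of $\hat{Z}$ in the shape \eqref{eq:fourier}. The holomorphic coefficients $c_F(n)$ are (up to a normalizing constant) $\mathrm{Tr}_n(1)$ for $n>0$ a discriminant. The coefficients $a_F(n)$ of the $\alpha(4nv)$ series are multiples of the Hurwitz class numbers $H(n)$, since $\xi_{1/2}\hat Z$ is essentially Zagier's weight $\frac32$ Eisenstein series. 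The $b_F$ coefficients (of $\beta$) correspond to $H(n)$ evaluated at negative indices in Zagier's completion. We also record the constant term $d_F(0)+d_F(1)\log v+d_F(2)v^{1/2}$. Because $\hat Z$ lies in the plus space, $\hat Z|W(4)$ is an explicit multiple of $\hat Z(\tau/4)$ (or of its variant on the other cusp). This gives $G$ and its coefficients $d_G(j)$, as needed in Theorem~\ref{thm:functional-equation}.

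Next we plug these data into Theorem~\ref{thm:sumformula-sesqui}. The Riesz sum $\frac{1}{\Gamma(\rho+1)}\sum_{n\le x}(x-n)^\rho c_F(n)$ should equal the following pieces.
\begin{enumerate}
\item A sum of residues of $\Lambda_N(F,s)x^{s+\rho}/(\Gamma(s+\rho+1)\cdots)$ at $s=1,\frac12,0,-\frac12$.
\item Two correction sums involving $a_F(n)$ and $b_F(n)$, each weighted by an explicit kernel coming from the Mellin transforms $Z(s)$ and $W_{1/2}(s)$.
\item A Bessel-type series in the coefficients of $G$.
\end{enumerate}
We will compute the kernel attached to the $a_F(n)=\text{const}\cdot H(n)$ series for $n\le x$. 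It is an integral of the form $\frac{1}{2\pi i}\int Z(s)\Gamma(s)^{-1}\cdots (x/n)^{s+\rho}ds$, and we expect it to evaluate, via the Mellin--Barnes representation of ${}_2F_1$, to
\[
\frac{(x-n)^\rho((x/n)-1)^{1/2}}{\sqrt{2n}(\rho+\frac12)}\,{}_2F_1\!\left(1,\tfrac12,\rho+\tfrac32;\tfrac12\left(1-\tfrac{x}{n}\right)\right).
\]
Moving it to the left side then produces exactly the weighted combination in \eqref{eq:asymptotic-formula for h*(n)}. The $b_F$ terms involve negative indices and, for $n>x$, the part of the $a_F$ terms coming from the other half of the contour. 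We expect these to be absorbed into the error term, or to vanish by the support of the kernel.

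For the main term, the pole at $s=1$ has residue proportional to $d_G(2)$. The constant term of $\hat Z$ at the other cusp contains $v^{1/2}$ with coefficient related to $\zeta$-values, and this produces $x/(\rho+1)$ times a constant. We will check that the constant is $\pi/(6\sqrt2)$, using $H(0)=-\frac1{12}$ and the factor $\pi$ arising from the Eisenstein normalization. The remaining poles at $s=\frac12,0,-\frac12$ contribute $O(x^{1/2}\log x)$ at most. For the error, we estimate the Bessel-type series $\sum_n c_G(n) n^{-\rho/2-1/4}x^{\rho/2+1/4}J_{\rho+1/2}(4\pi\sqrt{nx})$ and its analogues for $a_G,b_G$. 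Using $J_\nu(y)\ll y^{-1/2}$ and $\mathrm{Tr}_n(1),H(n)\ll n^{1/2+\epsilon}$, the series converges absolutely once $\rho>\frac32$, giving $O(x^{\rho/2+1/4-1/4})$. We then divide by $x^\rho$ to normalize to $(1-n/x)^\rho$. Getting $x^{1/2+\epsilon}$ exactly requires the Chandrasekharan--Narasimhan style bound, or a truncation at $n\le x$ using the Riesz smoothing. We expect this error estimation, together with the explicit hypergeometric evaluation of the $Z(s)$-kernel, to be the main obstacle. We will attempt the kernel by shifting the contour left and summing residues at the poles of $\Gamma$-factors in $Z(s)$, matching the resulting series termwise with the Gauss series for ${}_2F_1$.
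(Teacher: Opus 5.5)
\textbf{Verdict: genuine gap.} Your overall architecture matches the paper's: apply Theorem~\ref{thm:sumformula-sesqui} (via Theorem~\ref{asymptotic-formula}) to $\widehat{Z}_+$ with $N=4$, take the main term from the residue at $s=1$, then move one correction sum to the left and absorb the other into the error. For the main term, $i^{1/2}d_G(2)=\frac16$, and the $\pi$ comes from $(2\pi x/\sqrt N)^s$ at $s=1$, not from an Eisenstein normalization.

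\textbf{The two nonholomorphic series are interchanged.} This breaks both steps that produce the statement. By Theorem~\ref{thm:DIT-form}, the Hurwitz class numbers sit on the $\beta$-series, $b_F(-n)=H(n)/\sqrt n$. The $\alpha$-series is supported on squares: $a_F(n^2)=2$ and $a_F(n)=0$ otherwise. This is also what $\xi_{1/2}$ forces: the $\beta$-terms map to the holomorphic part $\sum H(n)q^n$ of Zagier's Eisenstein series, and the $\alpha$-terms map to its nonholomorphic part, supported on $q^{-n^2}$.

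\textbf{First consequence: the ${}_2F_1$ weight cannot come from the $Z(s)$-kernel.} That kernel is $h_\rho(n,x)$ of Proposition~\ref{thm:finite-sum-formula}, namely $(1-\frac nx)^\rho$ times $\int_0^1\log(1+(1-t)z)\,t^{\rho-\frac12}(1-t)^{-\frac12}\,dt$ with $z=\frac12(\frac xn-1)$. It vanishes to first order in $z$ as $n\to x$ and grows like $\log z$ as $n/x\to 0$. The target weight is $(1-\frac nx)^\rho z^{1/2}(\rho+\frac12)^{-1}(1+O(z))$ and stays bounded. So no residue or Mellin--Barnes matching can identify the two. In the paper the ${}_2F_1$ comes from the incomplete-beta kernel $g_\rho(n,x)$ of \eqref{eq:g-rho-def}, which is attached to $W_{\frac12}(s)$. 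That kernel is rewritten using three facts:
\begin{enumerate}
\item $1-I_w(\frac12,\rho+\frac12)=I_{1-w}(\rho+\frac12,\frac12)$;
\item the hypergeometric representation of the incomplete beta function;
\item a Pfaff transformation, which sends $y=\frac{x-n}{x+n}$ to $\frac{y}{y-1}=\frac12(1-\frac xn)$.
\end{enumerate}

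\textbf{Second consequence: the wrong sum is discarded.} Absorbing the $\beta$-series ($b_F$) contribution into the error would throw away the $H(n)$-sum, which is not small. Its weight is positive and $\asymp 1$ for $n\le x/2$, and $\sum_{n\le x}H(n)/\sqrt n\asymp x$, the same order as the main term. What actually goes into the error is the square-supported sum $\frac{x^\rho}{2\pi i}\sum_{n\le\sqrt x}2h_\rho(n^2,x)$. This is $O(x^{\rho+\frac12+\epsilon})$ by Lemma~\ref{thm:finite-sum-bound}, because $L^*(F,s)=2\zeta(2s)$ converges absolutely on $\Re(s)=\frac12+\epsilon$.

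\textbf{Source of the $x^{\frac12+\epsilon}$ error.} Together with the pole at $s=\frac12$, the square-supported sum above is the true source of the error term. The Bessel series are only $O(x^{\rho/2+1/4})$ and become negligible after dividing by $x^\rho/\Gamma(\rho+1)$. So no Chandrasekharan--Narasimhan refinement or truncation is needed.

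\textbf{Growth input and the range of $\rho$.} You need $\mathrm{Tr}_n(1)\ll n^{\epsilon}$ and $H(n)/\sqrt n\ll n^{\epsilon}$, and likewise for $G$. Then all the $\mu$'s equal $\epsilon$, $\rho_0=2+2\epsilon$, and Theorem~\ref{thm:sumformula-sesqui} applies for $\rho>\frac32$. With your exponent $\frac12+\epsilon$, the theorem would only give $\rho>\frac52$.
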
 
 
\subsection{Outline of the paper} We begin with the background of modular forms and the theory of sesquiharmonic Maass forms in Section \ref{sec:background}. Here we give the Fourier expansion of sesquiharmonic Maass form $F$ of weight $\frac{1}{2}$ with polynomial growth and study the behavior of the nonconstant coefficients. Next in Section \ref{sec:L-functions}, we prove Theorem \ref{thm:functional-equation} and some growth bounds for the special functions appearing in \eqref{eq:complete-L-function}. Section \ref{sec:summation-formula} consists of the summation formula for weight $\frac{1}{2}$ sesquiharmonic Maass forms and, as a corollary, an asymptotic related to the summation formula. Then in Section \ref{sec:example}, we apply the summation formula and its corollary to prove Theorem \ref{thm:asymptotic-formula-example}.

 \section*{Acknowledgements}
The first author was partially supported by National Science Foundation Grant DMS-2401356 and Simons Foundation Grant \#953473.  The authors thank Nikolaos Diamantis for helpful comments on an earlier version of this manuscript.  
 
 \section{Modular Forms Background}\label{sec:background}
The Jacobi theta function is defined by
\begin{gather*}
  \theta(\tau) := \sum_{n=-\infty}^\infty q^{n^2}
\end{gather*}
where we use the notation 
\begin{gather*}
q:=e(\tau)=e^{2\pi i\tau},\  \ \ \tau \in \mathbb{H}, \ \ \ \Re(\tau) = u, \ \ \Im(\tau) =v.
\end{gather*}
The multiplier system for the theta function is given by
\begin{gather*}
  \nu_{\theta}\left( \begin{pmatrix} a & b \\ c & d \end{pmatrix} \right)
:=
  (c\tau + d)^{-\frac12} \, \frac{\theta(\gamma \tau)}{ \theta(\tau)}
=
  \left( \frac{c}{d} \right)  \epsilon_d^{-1},
\end{gather*}
where
\begin{gather*}
\epsilon_d = \begin{cases} 
1 & d\equiv 1 \pmod{4} , \\
i & d \equiv 3 \pmod{4}.
\end{cases}
\end{gather*}

For any $\gamma = \begin{bmatrix} a & b \\ c & d \end{bmatrix} \in \SL_2(\ZZ)$ we define the $|_k \gamma$ operator acting on functions $f: \mathbb{H} \to \mathbb{C}$ by
$$
f |_k \gamma (z) = (cz+ d)^{-k} f(\gamma z).
$$
For $k \in \mathbb{Z}$, this defines a group action of $\SL_2(\ZZ)$ on $\{f : \mathbb{H} \to \mathbb{C}\}$. 

We let $\Gamma_0(N)$ denote the usual congruence subgroup of $\SL_2(\ZZ)$, consisting of matrices in $\SL_2(\ZZ)$ whose lower left entry is divisible by $N$. Given a Dirichlet character $\chi$ modulo $N$ and any $\gamma \in \Gamma_0(N)$, $\chi(\gamma)$ denotes $\chi$ evaluated at the lower right entry of $\gamma$.

We have the {\it weight $k$ hyperbolic Laplace operator} is given by
    \begin{align*}
    \Delta_k := -v^2\left(\frac{\partial^2}{\partial u^2}+\frac{\partial^2}{\partial v^2}\right) + ikv\left(\frac{\partial}{\partial u} + i\frac{\partial}{\partial v}\right),
    \end{align*}
    and this operator decomposes as 
    \begin{align} \label{eq:Deltasplitting}
    \Delta_{k} = - \xi_{2-k} \xi_{k},
    \end{align}
    where $\xi_k$ acts on functions $f(\tau)$ by $$\xi_k(f)(\tau)=2iy^k\overline{\frac{\partial f}{\partial \overline\tau}}.$$
    In particular, holomorphic functions are automatically annihilated by $\Delta_k$.
    Further, this operator intertwines with the slash operator in ``dual weights'' $k$ and $2-k$ in the sense that for all $f\colon \mathbb H\rightarrow\mathbb C$ and all $\gamma\in\Gamma,$
    $$
    \xi_k(f|_k\gamma)=\left(\xi_k(f)\right)|_{2-k}\gamma   .
    $$ 
 
In the following definitions, given a cusp $\alpha = (a,b) \in \Gamma_0(N) \backslash \mathbb{P}^1 (\mathbb{Q})$, we let $\sigma_{\alpha} \in \SL_2(\mathbb{Z})$ be such that $\sigma_{\alpha} ( \infty ) = \alpha$. 

\begin{definition}
  Let $N \in \mathbb{Z}$ such $4 | N$ and $\chi$ be a character on $\Gamma_{0}(N)$. Also let $F$ be a real analytic function on $\mathbb{H}$. 
\begin{enumerate}[label=(\alph*)]
\item We say that $F$ is a holomorphic modular form of weight $k$ on $\Gamma_{0}(N)$ with character $\chi$ if it satisfies the following conditions:
	\begin{enumerate}[label=(\roman*)]
	\item  For all $\gamma \in \Gamma_{0}(N), F|_k \gamma = \chi(\gamma) F.$
	\item $\xi_{k}(F)=0$.
	\item  $F|_k \sigma_{\alpha} (\tau)$ has at most polynomial growth in $v$ as $v \to \infty$ for all $\alpha \in  \Gamma_0(N) \backslash \mathbb{P}^1 (\mathbb{Q})$ (i.e. $F$ has polynomial growth at all cusps).
	\end{enumerate}
\item We say that $F$ is a harmonic Maass form of weight $k$ on $\Gamma_{0}(N)$ with character $\chi$ if it satisfies the following conditions:
	\begin{enumerate}[label=(\roman*)]
	\item For all $\gamma \in \Gamma_{0}(N), 
	F|_k \gamma =
	\begin{cases} 
  	  \chi(\gamma)F, & \text{if } k \in \mathbb{Z} \\
    	 \chi(\gamma)\nu_{\theta}(\gamma)^{2k}F, & \text{if } k \in \frac{1}{2} + \mathbb{Z}.
	\end{cases}$
	\item $\Delta_{k}(F)=0$.
	\item  $F|_k \sigma_{\alpha} (\tau)$ has at most linear exponential growth as $v \to \infty$ for all $\alpha \in  \Gamma_0(N) \backslash \mathbb{P}^1 (\mathbb{Q})$.
	\end{enumerate}
\item We say that $F$ is a sesquiharmonic Maass form of weight $k$ with respect to $\Gamma_{0}(N)$ with character $\chi$ if it satisfies the following properties:
 	\begin{enumerate}[label=(\roman*)]
	  \item For all $\gamma \in \Gamma_{0}(N), 
	F|_k \gamma =
	\begin{cases} 
  	  \chi(\gamma)F, & \text{if } k \in \mathbb{Z} \\
    	 \chi(\gamma)\nu_{\theta}(\gamma)^{2k}F, & \text{if } k \in \frac{1}{2} + \mathbb{Z}.
	\end{cases}$
	\item $\Delta_{k}(F)$ lies in $M_{k}(N, \chi)$
	\item  $F|_k \sigma_{\alpha} (\tau)$ has at most linear exponential growth as $v \to \infty$ for all $\alpha \in  \Gamma_0(N) \backslash \mathbb{P}^1 (\mathbb{Q})$.
	\end{enumerate}
\end{enumerate}
\end{definition}
We denote the $\mathbb{C}$-vector spaces of functions satisfying the conditions in (a), (b), and (c) respectively by $M_k(N, \chi)$, $H_k(N,\chi)$, and $V_k(N,\chi)$, respectively. Next we define the subspace $V_k(N,\chi )$ consisting of sesquiharmonic Maass forms with moderate growth by
\begin{align}
	V^{mg}_k(N,\chi) := \left\{ F \in V_k(4N,\chi) : F \mbox{ has polynomial growth at all cusps} \right\}.
\end{align}

\subsection{Fourier Expansions with $k = \frac{1}{2}$}
To describe the Fourier expansion of these functions when $k=\frac{1}{2}$, we will use the following special functions defined in \cite{DukeImamogluToth}:

\begin{align}\label{expression-alpha}
\alpha{(y)} := \frac{\sqrt{y}}{4\pi} {\int_{0}^\infty t^{-1/2}e^{-{\pi} yt}\log(1+t)dt}  \hspace{0.5cm} y>0,
\end{align}
and 
\begin{align}\label{expression-beta}
 \beta{(y)} := \frac{1}{\sqrt{\pi}} \int_{{\pi}y}^\infty {t^{1/2 -1} e^{-t}} dt.
\end{align}

\begin{lemma}[Proposition 2.4 \cite{allen-beckwith-sharma}]
Let $k = \frac{1}{2}$ and $F \in V_{\frac{1}{2}}(4N,\chi)$. Then $F$ has a Fourier series expansion of the form 
\begin{align}
	F(\tau)= \left(d(0) +d(1)\log y+d(2)y^{\frac{1}{2}} \right)+ \sum_{\substack{n \gg - \infty, \\ n \neq 0} } {c(n) q^{n}}n\\
+ \sum_{\substack{n \ll \infty, \\ n \neq 0} } {b(n) \beta(-4ny)q^n} + \sum_{n \geq 1}{a(n)\alpha(4ny)q^n}.
 \end{align}

\end{lemma}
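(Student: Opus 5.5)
The plan is to separate variables in the Fourier expansion and solve the resulting ordinary differential equations coefficient by coefficient. Since $F|_{\frac12}\gamma$ transforms with the theta multiplier and $\chi$, and $T=\begin{pmatrix}1&1\\0&1\end{pmatrix}\in\Gamma_0(4N)$ with $\nu_\theta(T)=\chi(T)=1$, the function $F$ is $1$-periodic in $u$. Because $F$ is real analytic, it has a Fourier expansion $F(\tau)=\sum_{n\in\ZZ} f_n(v)e^{2\pi i n u}$. We know $\Delta_{\frac12}F=g\in M_{\frac12}(4N,\chi)$. Using \eqref{eq:Deltasplitting}, we write $\Delta_{\frac12}=-\xi_{\frac32}\xi_{\frac12}$. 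We then apply $\Delta_{\frac12}$ termwise; this is justified by the uniform convergence of the derivatives of the Fourier series on compact sets. The result is a second order ODE in $v$ for each $n$:
\[
\Delta_{\frac12}\bigl(f_n(v)e^{2\pi i n u}\bigr)=g_n e^{-2\pi n v}e^{2\pi i n u},
\]
where $g=\sum_{n\ge0} g_n q^n$. The homogeneous solutions are classical. For $n\neq0$ they are spanned by $e^{-2\pi nv}$ (i.e. $q^n$) and $\beta(-4nv)q^n$ (up to normalization, the incomplete gamma function $\Gamma(\frac12,-4\pi n v)$). For $n=0$ they are spanned by $1$ and $v^{\frac12}$.

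The key step is to find particular solutions of the inhomogeneous equations. I would avoid solving the ODE blindly. Instead I would use the factorization: $\xi_{\frac12}F$ is a weight $\frac32$ function $h$ with $\xi_{\frac32}h=-g$, so $h$ is itself harmonic-type. Its Fourier coefficients are then $h_n(v)=$ (holomorphic $q^{n}$ term) $+\;\overline{g_{-n}}$ times the weight $\frac32$ nonholomorphic function $v^{-\frac12}$-type incomplete gamma. Inverting $\xi_{\frac12}$ on each piece then gives the terms directly. The constant term $g_0$ of $g$ produces, after two inversions, a term in $\log v$. For $n>0$, the coefficient $g_n$ forces a particular solution. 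I would verify by direct differentiation under the integral in \eqref{expression-alpha} that $\alpha(4nv)q^n$ is annihilated by $\xi_{\frac32}\xi_{\frac12}$ up to a constant multiple of $q^n$; concretely, $\xi_{\frac12}(\alpha(4nv)q^n)$ should be a multiple of $\overline{\beta\text{-type}(4nv)\,q^{-n}}$ times $v^{\frac32}$. This computation, checking that $\alpha$ as defined by the integral really satisfies the ODE with the right normalization, is the main technical obstacle. I would handle it by substituting $t\mapsto t/v$, differentiating, and integrating by parts to remove the $\log(1+t)$.

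Finally, I would index the coefficients: $c(n)$ for $q^n$, $b(n)$ for $\beta(-4nv)q^n$, $a(n)$ for $\alpha(4nv)q^n$, and $d(0),d(1),d(2)$ for the constant term. The linear exponential growth condition at $\infty$ then truncates the sums. The term $q^n$ grows like $e^{2\pi|n|v}$ for $n<0$. The term $\beta(-4nv)q^n$ grows like $e^{2\pi n v}$ for $n>0$, since $\beta(y)\sim$ constant for $y\to-\infty$ after analytic continuation, or behaves like $e^{-\pi|y|}$ for $y\to+\infty$. Linear exponential growth allows only finitely many of these growing terms, which gives $n\gg-\infty$ in the first sum and $n\ll\infty$ in the second. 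Since $g$ is holomorphic at the cusps, $g_n=0$ for $n<0$. Hence the $\alpha$-terms occur only for $n\ge1$, as claimed.
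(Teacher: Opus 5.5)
Your argument is correct, and it takes a genuinely different route from the paper. The paper proves nothing here: it imports the expansion from Proposition 2.4 of \cite{allen-beckwith-sharma}. It then adds a remark that the extra term $d(3)y^{\frac12}\log y$ appearing there vanishes on $V_{\frac12}(4N,\chi)$ ``by a short calculation.''

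Your derivation is self-contained. Working mode by mode and using $\Delta_{\frac12}=-\xi_{\frac32}\xi_{\frac12}$ to split the inhomogeneous second-order ODE into two first-order inversions, you get that remark for free. Since $g=\Delta_{\frac12}F$ is holomorphic, the constant mode satisfies $-v^2f_0''-\frac12 vf_0'=g_0$. Its solutions are $d(0)+d(2)v^{\frac12}+2g_0\log v$, because $\Delta_{\frac12}(\log v)=\frac12$. By contrast, $\Delta_{\frac12}(v^{\frac12}\log v)=-\frac12 v^{\frac12}$ is not holomorphic, which is exactly the paper's omitted calculation. Your factorization through $h=\xi_{\frac12}F$ also shows why the $\alpha$-terms occur only for $n\ge 1$: they arise as preimages of the shadow modes, and $g_n=0$ for $n<0$. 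The price is that you must verify the $\alpha$-identity yourself, which the citation avoids.

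Two statements in your sketch need correcting.

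\emph{The $\alpha$-identity.} The identity you plan to verify is misstated: there is no complex conjugation and no factor $v^{\frac32}$. With your substitution, $\alpha(y)=\frac{1}{4\pi}\int_0^\infty s^{-\frac12}e^{-\pi s}\log(1+s/y)\,ds$. Differentiating gives $\alpha'(y)=-\frac{1}{4\pi y}\int_0^\infty \frac{s^{\frac12}e^{-\pi s}}{s+y}\,ds=-\frac{1}{8\sqrt{\pi}}\,y^{-\frac12}e^{\pi y}\,\Gamma(-\frac12,\pi y)$. Hence $\xi_{\frac12}\bigl(\alpha(4nv)q^n\bigr)=-\frac{\sqrt{n}}{4\sqrt{\pi}}\,\Gamma(-\frac12,4\pi nv)\,q^{-n}$, which is the weight $\frac32$ nonholomorphic mode. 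Consequently $\Delta_{\frac12}\bigl(\alpha(4nv)q^n\bigr)=-\frac{1}{8\pi}q^n$.

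\emph{The growth of $\beta$.} $\beta(y)$ is not asymptotically constant as $y\to-\infty$. Its continuation has size $\asymp |y|^{-\frac12}e^{\pi|y|}$, and this is exactly what makes $\beta(-4nv)q^n$ grow like $v^{-\frac12}e^{2\pi nv}$ for $n>0$, as you then correctly use. The truncation is cleanest via $f_n(v)=\int_0^1 F(u+iv)e^{-2\pi i n u}\,du=O(e^{Av})$. This forces the coefficient of every mode growing faster than $e^{Av}$ to vanish, giving $n\gg-\infty$ in the holomorphic sum and $n\ll\infty$ in the $\beta$-sum.
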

\begin{remark}
We note that Proposition 2.4 of \cite{allen-beckwith-sharma}, the formula also includes the term $d(3)y^{\frac{1}{2}} \log y$. However, a short calculation shows that in the sesquiharmonic space $ V_{\frac{1}{2}}(4N,\chi)$, this term must equal 0.
\end{remark}

\begin{lemma}\label{thm:fourier-moderate-growth}
Suppose $F \in V^{mg}_{\frac{1}{2}} (4 N, \chi)$ then $F$ has a Fourier expansion of the form
\begin{align}\label{fourier-expan}
	F(\tau)= \left(d(0) +d(1)\log v+d(2)v^{\frac{1}{2}} \right)+ \sum_{n> 0}  {c(n) q^{n}}
	\end{align}
	$$ + \sum_{n < 0}{b(n) \beta(-4nv)q^n} + \sum_{n \geq 1}{a(n)\alpha(4nv)q^n}$$.
\end{lemma}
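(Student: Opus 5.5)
Starting from the general expansion in the preceding Lemma (Proposition 2.4 of \cite{allen-beckwith-sharma}), we must show that polynomial growth at the cusp $\infty$ forces two families of coefficients to vanish: the holomorphic coefficients $c(n)$ with $n<0$, and the nonholomorphic coefficients $b(n)$ with $n>0$. The terms $a(n)\alpha(4ny)q^n$ for $n\ge 1$ and the constant-term functions are already of the required shape, so nothing is needed for them.

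We will isolate each Fourier mode by integrating over a period. Since $F$ is invariant under $\tau\mapsto\tau+1$ (as $T\in\Gamma_0(4N)$, with trivial multiplier there), for each $n\neq 0$ the function
$$
F_n(v):=\int_0^1 F(u+iv)e^{-2\pi i n u}\,du
$$
is the $n$-th Fourier coefficient. It equals $c(n)e^{-2\pi n v}+b(n)\beta(-4nv)e^{-2\pi n v}$ for $n<0$, and $c(n)e^{-2\pi nv}+b(n)\beta(-4nv)e^{-2\pi nv}+a(n)\alpha(4nv)e^{-2\pi nv}$ for $n>0$, where only the coefficients allowed by the index ranges are included. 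Polynomial growth at $\infty$ means $|F(u+iv)|\ll v^{A}$ for $v\ge 1$ uniformly in $u$. Hence $|F_n(v)|\ll v^A$ as $v\to\infty$.

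\textbf{Case $n<0$.} Write $m=-n>0$, so the mode is $c(n)e^{2\pi m v}+b(n)\beta(4mv)e^{2\pi mv}$. From the definition of $\beta$ we have $\beta(y)\sim \pi^{-1}(\pi y)^{-1/2}e^{-\pi y}$, so $\beta(4mv)e^{2\pi mv}$ decays like $v^{-1/2}e^{-2\pi m v}$. The $c(n)$ term, on the other hand, grows exponentially. Polynomial growth therefore forces $c(n)=0$ for all $n<0$.

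\textbf{Case $n>0$.} Now $-4nv<0$, so here $\beta$ must be understood through its analytic continuation, $\beta(y)=\pi^{-1/2}\Gamma(\tfrac12,\pi y)$. For negative argument this grows like $e^{\pi|y|}$ up to a power. Thus $b(n)\beta(-4nv)e^{-2\pi nv}$ behaves like $v^{-1/2}e^{4\pi nv}e^{-2\pi nv}=v^{-1/2}e^{2\pi nv}$, which is exponentially growing. The other two pieces are small: $c(n)e^{-2\pi nv}$ is exponentially small, and by Laplace's method $\alpha(y)\sim \frac{1}{4\pi^{2}\sqrt y}$, so $a(n)\alpha(4nv)e^{-2\pi nv}$ is exponentially small too. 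Hence $b(n)=0$ for $n>0$.

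The main point requiring care is the asymptotic behaviour of $\beta$ at negative argument. That is the one delicate step; the vanishing of $c(n)$ for $n<0$ and the leftover shape of the expansion are routine. Combining both cases yields exactly the expansion \eqref{fourier-expan}.
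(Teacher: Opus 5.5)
Your proof is correct and uses the same idea as the paper: the terms $c(n)q^n$ with $n<0$ and $b(n)\beta(-4nv)q^n$ with $n>0$ grow exponentially, which is incompatible with polynomial growth of $F$ at $\infty$. Extracting each mode via $\int_0^1 F(u+iv)e^{-2\pi i n u}\,du$ is a welcome rigorous version of the term isolation the paper leaves implicit, and the small slips in your asymptotics (one has $\beta(y)\sim \pi^{-1/2}(\pi y)^{-1/2}e^{-\pi y}$ and $\alpha(y)\sim 1/(8\pi^2 y)$) are harmless, since only exponential versus polynomial behaviour is used.
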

\begin{proof}
If $c(n) \neq 0$ for some $n <0$, then $F(\tau)$ has exponential growth at $i \infty$, since $|q^n| = e^{-2 \pi n v}$. Similarly,  $|\beta(- 4 \pi n v) q^n| \sim v^{-\frac{1}{2}} e^{2 \pi n v}$ shows that $b(n) = 0$ for $n >0$. \end{proof}
We also need the growth conditions for the coefficients of sesquiharmonic maass forms with polynomial growth. 
 
\begin{lemma}\label{thm:polynomial-bound-coefficients}
If  $F \in V^{mg}_{\frac{1}{2}} (4 N,\chi)$ with the Fourier expansion as in \eqref{fourier-expan}, then $a(n) = \mathcal{O}(n^{{\mu}_{1}}), b(n) = \mathcal{O}(n^{{\mu}_{2}})$ and $c(n)= \mathcal{O}(n^{{\mu}_{3}})$ as $n \rightarrow \infty$ for some ${\mu}_{1},{\mu}_{2}, {\mu}_{3} \in \mathbb{R}_{>0}.$
\end{lemma}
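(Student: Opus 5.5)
The plan is to reduce the statement to the classical Hecke argument: extract each Fourier coefficient by integrating over a horizontal segment, then use the polynomial growth of $F$ on the whole upper half-plane. The expansion \eqref{fourier-expan} has three pieces (holomorphic, $\beta$-part, $\alpha$-part). I would first separate them using the differential operators, so that each piece can be treated as a Fourier coefficient of a function of moderate growth.

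\textbf{Step 1: the $\alpha$-part.} Set $g := \xi_{\frac12}(F)$ and $f := \xi_{\frac32}(g) = -\Delta_{\frac12}(F)$ (up to sign, by \eqref{eq:Deltasplitting}).
\begin{itemize}
\item By the definition of $V_{\frac12}$, $f \in M_{\frac12}(4N,\chi)$.
\item $f$ inherits moderate growth at all cusps from $F$, since $\xi$ commutes with slashing and differentiation preserves polynomial growth on Siegel domains (via Cauchy estimates on discs of radius $\asymp v$).
\item A direct computation with \eqref{expression-alpha} shows that $\xi_{\frac32}\xi_{\frac12}$ sends $a(n)\alpha(4nv)q^n$ to a constant multiple of $\overline{a(n)}\, n^{1/2} q^{n}$, or a conjugate of this with the index sign adjusted.
\item The other pieces are killed, apart from the $v^{1/2}$ and $\log v$ constants.
\item Hence $a(n)$ is, up to a factor $n^{\pm 1/2}$, a Fourier coefficient of the holomorphic modular form $f$.
\end{itemize}
Hecke's standard bound then applies. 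We have $|f(\tau)| \ll v^{-A}$ uniformly on $\mathbb{H}$ for small $v$, which follows from moderate growth at every cusp together with a fundamental domain argument, using $\Im(\gamma\tau)$ in place of $v$. We also have $c_f(n) = e^{2\pi n v}\int_0^1 f(u+iv)e^{-2\pi i n u}\,du$. Taking $v = 1/n$ gives $c_f(n) \ll n^{A}$, and so $a(n) = \mathcal{O}(n^{\mu_1})$.

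\textbf{Step 2: the $\beta$-part.} Since $\xi_{\frac12}(g)=0$ up to the constant term, $g$ is a harmonic form of weight $\frac32$ whose principal non-constant part comes from $\xi_{\frac12}$ applied to the $\beta$- and $\alpha$-series. Indeed, $\xi_{\frac12}(b(n)\beta(-4nv)q^n)$ is a constant times $\overline{b(n)}\,|n|^{1/2} q^{|n|}$. The $\alpha$-terms map to $\overline{a(n)}$ times an explicit function of $nv$ which decays like a power times $e^{-2\pi n v}$.

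Subtracting these, the coefficient of $q^{|n|}$ in $g$ is
\[
b(n)|n|^{1/2} = e^{2\pi|n|v}\int_0^1 g\,e(-|n|u)\,du \;-\; \text{(explicit $a$-contribution)}.
\]
We again take $v = 1/|n|$ and use the polynomial bound for $g$ on $\mathbb{H}$ together with the Step 1 bound on $a(n)$. This gives $b(n) = \mathcal{O}(n^{\mu_2})$.

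\textbf{Step 3: the holomorphic part.} Apply the same integral formula directly to $F$:
\[
c(n) = e^{2\pi n v}\int_0^1 F(u+iv)e^{-2\pi i n u}\,du \;-\; a(n)\alpha(4nv).
\]
The $\beta$-terms have negative index, so they do not contribute to the $n$-th coefficient. At $v = 1/n$ the value $\alpha(4)$ is a fixed constant, so the previous bounds give $c(n) = \mathcal{O}(n^{\mu_3})$.

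\textbf{Main obstacle.} The key step I expect to be hardest is the uniform estimate $|F(\tau)| \ll v^{-A} + v^{A}$ on all of $\mathbb{H}$, together with its analogues for $g$ and $f$.

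\begin{itemize}
\item For $F$, the difficulty is the logarithmic-type growth of the half-integral weight automorphy factor combined with the $v^{1/2}$ and $\log v$ constant terms at each cusp.
\item I would handle this as follows. Write $\tau = \gamma\sigma_\alpha\tau'$ with $\tau'$ in a Siegel set. Then $|F(\tau)| = |c\tau'+d|^{\ldots}\,|F|_{\frac12}\sigma_\alpha(\tau')|$, and one uses $\Im\tau' \le \max(v, 1/v)$ times constants depending on $N$.
\item For $g$ and $f$, the derivative bounds follow from Cauchy's estimate applied to the real-analytic functions. More concretely, one can estimate $\partial_{\bar\tau}F$ on a disc of radius $v/2$ through the mean-value property for eigenfunctions of $\Delta$. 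Alternatively, it suffices to verify polynomial growth at each cusp from the explicit Fourier expansions, which after Step 1 are visibly of moderate growth.
\end{itemize}
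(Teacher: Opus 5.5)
Your proof is correct. It follows the paper's overall strategy: the $\xi$-operators turn the non-holomorphic coefficients into coefficients of forms of moderate growth, and Hecke's integral $\int_0^1(\cdot)\,e^{-2\pi i n u}\,du$ at height $v=1/n$ is then applied with a bound $\ll v^{-A}$ near the real axis.

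The difference is in how $a(n)$ and $b(n)$ are handled. The paper notes that $\xi_{\frac12}(F)$ is a moderate-growth harmonic form of weight $\frac32$ and cites Proposition 3.6 of \cite{bdgrt}. You prove these bounds directly instead:
\begin{itemize}
\item $a(n)$ comes from the holomorphic form $-\Delta_{\frac12}(F)\in M_{\frac12}(4N,\chi)$. Its moderate growth is already part of the definition of $V_{\frac12}$, so your Cauchy-estimate remark is unnecessary.
\item $b(n)$ comes from the positive-frequency coefficients of $g=\xi_{\frac12}(F)$.
\end{itemize}
This makes the argument self-contained.

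Your Step 3 is more careful than the paper's. The $n$-th Fourier coefficient of $F$ at height $v$ is $(c(n)+a(n)\alpha(4nv))e^{-2\pi n v}$. The paper's displayed identity for $c(n)$ drops the $a(n)$ term. You correctly subtract it, after bounding $a(n)$ first.

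A few details need fixing, though none affects the conclusion:
\begin{itemize}
\item $\xi_{\frac32}\xi_{\frac12}$ is complex-linear. It sends $a(n)\alpha(4nv)q^n$ to exactly $\frac{a(n)}{8\pi}q^n$, with no conjugate and no power of $n$.
\item Harmonicity of $g$ follows from $\Delta_{\frac32}g=-\xi_{\frac12}\xi_{\frac32}g=-\xi_{\frac12}(f)=0$, since $f$ is holomorphic. Your claim ``$\xi_{\frac12}(g)=0$'' is false, and Step 2 never actually uses harmonicity.
\item The $a$-contribution you subtract in Step 2 is zero. $\xi_{\frac12}$ sends the $\alpha$-terms to multiples of $\Gamma(-\frac12,4\pi nv)q^{-n}$, i.e.\ to frequency $-n$, while the $\beta$-terms go to frequency $+|n|$. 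Hence the positive-frequency coefficients of $g$ are exactly constant multiples of $\overline{b(-m)}\,m^{1/2}e^{-2\pi m v}$.
\end{itemize}
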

\begin{proof}
We compute directly
\begin{align*}
&\xi_{\frac{1}{2}}(F)(\tau)= \\
&-(4{\pi})^{\frac{1}{2}} \sum_{n \geq 0}{\overline{b(n)}n^{\frac{1}{2}}q^n} + \frac{1}{8{\pi}}\sum_{n \geq 1}{a(n)\Gamma \left(-\frac{1}{2},4 {\pi}ny\right) (4{\pi}n)^{\frac{1}{2}} q^{-n}} +\overline{ d(1)} y^{-\frac{1}{2}} + \frac{\overline{d(2)}}{2} 
\end{align*} 
The basic properties of $\xi_{\frac{1}{2}}$ imply that $\xi_{\frac{1}{2}}(F)$ is harmonic, has weight $\frac{3}{2}$ with character $\chi$ for $\Gamma_0(N)$, and has moderate growth at all cusps. In the notation of \cite{bdgrt}, this means $\xi_{\frac{1}{2}}(F) \in \mathbb{H}^{Eis}_{\frac{3}{2}}{(N, \chi)}$. Now the bounds for $b(n)$ and $a(n)$ follow from Proposition 3.6 of \cite{bdgrt}. For $c(n)$, we notice that for $n > 0$,
\begin{align*}
c(n)e^{-2{\pi}ny}= \int_{0}^{1} {F(x+iy)e^{-2{\pi}nx}}dx
\end{align*}
Since $F$ has polynomial growth for all cusps, by Corollary 5.1.17 of \cite{cohen2017modular},  there exist $ A, C_{1}, C_{2}>0$ such that $|F(\tau)| \leq C_{1}$ as $v \rightarrow \infty $ and $|F(\tau)| \leq C_{2}v^{-A}$ as $ v \rightarrow 0$. Therefore, 
\begin{align*}
|c(n)e^{-2{\pi}nv}| \leq Cv^{-A}.
\end{align*}
Now by taking $v= 1/n$, we deduce $a(n)= \mathcal{O}(n^A)$.
\end{proof}

\subsection{Fricke Involution}
We have the analogue of Proposition 3.6 of \cite{bdgrt}:

\begin{lemma}
Let $F \in  V^{mg}_{\frac{1}{2}} ( N, \chi)$ and define $G(\tau) = F|_{\frac{1}{2}}W(N)(\tau)$. Then  $G \in   V^{mg}_{\frac{1}{2}} (N, \bar{\chi}\left(\frac{N}{\bullet})\right)$.
\end{lemma}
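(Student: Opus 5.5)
Three properties of a sesquiharmonic Maass form of moderate growth have to be checked for $G$: the transformation law under $\Gamma_0(N)$ with the character $\bar{\chi}\left(\frac{N}{\bullet}\right)$ and the theta multiplier, the condition that $\Delta_{\frac12}G$ lies in $M_{\frac12}(N,\cdot)$, and polynomial growth at every cusp.

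\emph{Transformation law.} The key identity is the conjugation
$$W(N)\begin{pmatrix} a & b\\ c& d\end{pmatrix}W(N)^{-1}=\begin{pmatrix} d & -c/N\\ -Nb & a\end{pmatrix},$$
which lies in $\Gamma_0(N)$ whenever $N\mid c$. For $\gamma\in\Gamma_0(N)$, write $\gamma'$ for this conjugate and note $G|_{\frac12}\gamma = F|_{\frac12}W(N)\gamma = F|_{\frac12}\gamma' W(N)$, up to the standard sign and branch conventions for half-integral weight. Since $F$ transforms under $\gamma'$, we get $\chi(\gamma')\nu_\theta(\gamma')\,G$. We have $\chi(\gamma')=\chi(a)=\bar\chi(d)$, because $ad\equiv 1 \pmod N$. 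The ratio $\nu_\theta(\gamma')/\nu_\theta(\gamma)$ should be the Kronecker symbol $\left(\frac{N}{d}\right)$. I would verify this exactly as in the holomorphic case (Shimura's theorem on the Fricke involution in half-integral weight), by writing $\nu_\theta$ explicitly via $\left(\frac{c}{d}\right)\epsilon_d^{-1}$ and using quadratic reciprocity; alternatively, one can compare with $\theta|W(4)$. Since the slash-operator identity is purely algebraic, this step is identical to the holomorphic or harmonic case, and I would cite it (for instance as in the proof of the analogous lemma in \cite{bdgrt}) rather than redo the cocycle computation.

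\emph{Laplacian and growth.} Since $W(N)$ is a scalar multiple of an element of $\mathrm{SL}_2(\mathbb{R})$, the operator $\Delta_{\frac12}$ commutes with $|_{\frac12}W(N)$ (via $\xi_k(f|_k\gamma)=\xi_k(f)|_{2-k}\gamma$ applied twice, together with \eqref{eq:Deltasplitting}). Hence $\Delta_{\frac12}G=(\Delta_{\frac12}F)|_{\frac12}W(N)$, and by the holomorphic version of the character computation above this lies in $M_{\frac12}(N,\bar\chi\left(\frac{N}{\bullet}\right))$. For growth, take a cusp $\alpha$ and $\sigma_\alpha\in\SL_2(\ZZ)$. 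Write $W(N)\sigma_\alpha=\sigma_{\alpha'}\,\delta$, where $\delta$ is upper triangular with rational entries. Then $G|\sigma_\alpha$ equals, up to a constant, $F|\sigma_{\alpha'}$ evaluated at $(a\tau+b)/d$. Because $F|\sigma_{\alpha'}$ has polynomial growth as $v\to\infty$, and an affine change of variable preserves this, $G$ has polynomial growth at $\alpha$.

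\emph{Main obstacle.} The only genuinely delicate point is pinning down the multiplier: the character $\left(\frac{N}{\bullet}\right)$ and the branch of $(N\tau)^{-1/2}$. I expect to handle this by reducing to the known transformation of $\theta$ under $W(4)$, or by citing the standard computation, since none of it depends on holomorphy.
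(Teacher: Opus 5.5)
Your proposal is correct and follows essentially the same route as the paper: the transformation law is delegated to Shimura's Proposition 1.4, and sesquiharmonicity comes from the intertwining $\xi_k(f|_k\gamma)=\xi_k(f)|_{2-k}\gamma$. The paper iterates the $\xi$-operators to show $\xi_{\frac{1}{2}}\Delta_{\frac{1}{2}}G=0$, whereas you package the same identity as $\Delta_{\frac{1}{2}}G=(\Delta_{\frac{1}{2}}F)|_{\frac{1}{2}}W(N)$ and apply the holomorphic case of Shimura's result; your explicit check of polynomial growth at the cusps via $W(N)\sigma_\alpha=\sigma_{\alpha'}\delta$ supplies a point the paper's proof leaves implicit.
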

\begin{proof} 
The transformation law is a consequence of Proposition 1.4 of \cite{Shimura}. Moreover, from Proposition 3.4 of \cite{bdgrt}, we have 
\begin{align*}
\xi_{\frac{1}{2}}(F|_{\frac{1}{2}}W(N))= Ni \xi_{\frac{1}{2}}(F)|_{\frac{3}{2}}W(N).
\end{align*} 
Now after applying $\xi_{\frac{3}{2}}$, we get 
  
\begin{align*}
\xi_{\frac{3}{2}}(\xi_{\frac{1}{2}}(F|_{\frac{1}{2}}W(N)))&= \xi_{\frac{3}{2}} \left( Ni (\xi_{\frac{1}{2}}(F))|_{\frac{3}{2}}W(N)\right)\\
&= N^{2} \xi_{\frac{3}{2}}(\xi_{\frac{1}{2}}(F))|_{\frac{1}{2}}W(N)
\end{align*}

After again applying $\xi_{\frac{1}{2}}$, we obtain the required condition to be sesquiharmonic,
\begin{align*}
( \xi_{\frac{1}{2}}\circ \Delta_{\frac{1}{2}})F|_{\frac{1}{2}}W(N) =0.
\end{align*}
\end{proof}

\section{$L$-series attached to sesquiharmonic Maass forms}\label{sec:L-functions}
Throughout the section, we assume that $F$ and $G$ belong to $V^{mg}_{\frac{1}{2}}(N,\chi)$. By Lemma \ref{thm:fourier-moderate-growth}, they have Fourier expansions as in \eqref{eq:fourier}.  Also by Lemma \ref{thm:polynomial-bound-coefficients}, there is a real number $\alpha > 0$ such that $c_{F}(n),b_{F}(n),a_{F}(n),c_{G}(n),b_{G}(n),a_{G}(n)$ are all bounded by $\mathcal{O}(|n|^{\alpha}),n \in \mathbb{Z}$. 

\subsection{Special Functions}
To define our $L$-series, we will need a few special functions. 

First, for any $ \nu \in \mathbb{R}$, define 
\begin{equation}\label{eq:W-def}
W_{\nu}(s) = \displaystyle \int_{0}^ \infty \Gamma\left(\nu, 2x\right)e^{x} x^{s-1} dx, \hspace{0.5cm} \Re(s) >0,
\end{equation}

where $\Gamma(\nu, 2x)$ is the incomplete gamma function defined as follows
$$ \Gamma(s,a) = \int_{a}^\infty e^{-y} y^{s-1} dy, \hspace{1cm} a>0, \hspace{0.5cm} \Re(s)>0.$$
We also define the function
\begin{equation}\label{eq:Z-def}
 Z(s)= \int_{0}^{\infty} \alpha\left(\frac{2x}{\pi}\right)e^{-x}x^{s}\frac{dx}{x},   \hspace{0.5cm} \Re(s)>1.
\end{equation}

It will be helpful to relate the special function $W_{\nu}(s)$  to the $\beta$ special function introduced in the previous section and to study the analytic properties of the special function $Z(s)$.
\begin{lemma}\label{thm:W-formula}
For $\Re(s) >0$,
$$
W_{\frac{1}{2}}(s) = \sqrt{\pi} \int_0^{\infty} \beta \left( \frac{2 x}{\pi} \right) e^{x} x^{s-1} dx 
$$
\end{lemma}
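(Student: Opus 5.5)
The plan is to rewrite $\beta$ in terms of the incomplete gamma function and substitute directly into the definition \eqref{eq:W-def} of $W_{\frac12}(s)$. Start from \eqref{expression-beta}. The integrand $t^{1/2-1}e^{-t}$ is exactly the integrand of $\Gamma(\tfrac12,\cdot)$, so for $y>0$
\[
\beta(y) = \frac{1}{\sqrt{\pi}} \int_{\pi y}^{\infty} t^{-\frac12} e^{-t}\, dt = \frac{1}{\sqrt{\pi}}\, \Gamma\left(\tfrac12, \pi y\right).
\]
Evaluating at $y = \frac{2x}{\pi}$ gives $\pi y = 2x$, hence
\[
\sqrt{\pi}\, \beta\left(\frac{2x}{\pi}\right) = \Gamma\left(\tfrac12, 2x\right)
\]
for every $x>0$.

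Next multiply both sides by $e^{x}x^{s-1}$ and integrate over $(0,\infty)$. The left side becomes $\sqrt{\pi}\int_0^\infty \beta(2x/\pi)e^x x^{s-1}\,dx$, and by \eqref{eq:W-def} with $\nu=\frac12$ the right side is $W_{\frac12}(s)$. This gives the claimed identity.

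The one point needing care is that the integral defining $W_{\frac12}(s)$ converges absolutely for $\Re(s)>0$, so that both sides are well defined on that half-plane. I would check the two endpoints separately.

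\begin{itemize}
\item Near $x=0$: $\Gamma(\tfrac12,2x)\to\Gamma(\tfrac12)=\sqrt{\pi}$, so the integrand is $O(x^{\Re(s)-1})$. This is integrable exactly when $\Re(s)>0$.
\item As $x\to\infty$: the standard asymptotic $\Gamma(\tfrac12,2x)\sim (2x)^{-1/2}e^{-2x}$ shows the integrand is $O(x^{\Re(s)-3/2}e^{-x})$, which is integrable.
\end{itemize}

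The argument is therefore essentially a change of variables, with no real obstacle beyond this convergence check.
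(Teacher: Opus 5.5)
Your proposal is correct and follows the same route as the paper: identify $\sqrt{\pi}\,\beta(2x/\pi)$ with $\Gamma(\frac{1}{2},2x)$ and substitute into the definition of $W_{\frac{1}{2}}(s)$. Your check that the integral converges absolutely at both endpoints for $\Re(s)>0$ is a small addition that the paper's proof omits.
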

\begin{proof}
Using \eqref{expression-beta}, we have 
\begin{align*}
\int_{0}^{\infty} \beta\left(\frac{2x}{\pi}\right)e^{x}x^{s}\frac{dx}{x}
 &= \int_{0}^{\infty} \frac{1}{\sqrt{\pi}} \Gamma\left(\frac{1}{2} , 2x\right) e^{x}x^{s}\frac{dx}{x}\\
 &= \frac{1}{\sqrt{\pi}} W_{\frac{1}{2}}(s)
\end{align*}
\end{proof}

\begin{lemma}\label{thm:Z-formula}
$ For \Re(s) >1,$ we have 
\begin{align*}
Z(s)= \frac{\Gamma \left(s+\frac{1}{2} \right)}{\sqrt{8 {\pi}^3}} \int_{0}^{\infty} \frac{t^{-\frac{1}{2}} \log (1+t)}{(2t+1)^{s+\frac{1}{2}}} dt.
\end{align*}
\end{lemma}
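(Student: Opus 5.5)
Substitute the integral definition of $\alpha$ into $Z(s)$ and use Tonelli to swap the order of integration. With $y = 2x/\pi$ we have $\sqrt{y} = \sqrt{2x/\pi}$ and $\pi y t = 2xt$, so
\begin{align*}
\alpha\left(\frac{2x}{\pi}\right) = \frac{1}{4\pi}\sqrt{\frac{2x}{\pi}} \int_0^\infty t^{-\frac12} e^{-2xt}\log(1+t)\,dt .
\end{align*}
Plugging this into the defining integral of $Z(s)$ gives
\begin{align*}
Z(s) = \frac{\sqrt{2}}{4\pi^{3/2}} \int_0^\infty \int_0^\infty t^{-\frac12}\log(1+t)\, e^{-x(2t+1)} x^{s-\frac12}\,dt\,\frac{dx}{x}.
\end{align*}
For real $s$ the integrand is nonnegative, so Tonelli lets us integrate over $x$ first. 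For complex $s$ we apply Fubini and dominate by the integral with $\sigma = \Re(s)$.

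The inner integral is a standard Gamma integral. Substituting $x \mapsto x/(2t+1)$ gives
\begin{align*}
\int_0^\infty e^{-x(2t+1)}x^{s+\frac12}\frac{dx}{x} = \frac{\Gamma(s+\frac12)}{(2t+1)^{s+\frac12}},
\end{align*}
which is valid for $\Re(s) > -\tfrac12$. The constant simplifies as $\frac{\sqrt2}{4\pi^{3/2}} = \frac{1}{2\sqrt2\,\pi^{3/2}} = \frac{1}{\sqrt{8\pi^3}}$. This yields exactly the stated formula.

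The only point needing care is checking that the final $t$-integral converges absolutely, since that justifies the interchange. Near $t = 0$ the integrand is $t^{-1/2}\log(1+t) \sim t^{1/2}$, which is integrable. As $t \to \infty$ it is $\asymp t^{-1/2}\log t \cdot t^{-\sigma-1/2} = t^{-\sigma-1}\log t$, which is integrable for $\sigma > 0$. So the hypothesis $\Re(s) > 1$ is more than enough, and it also makes the original definition of $Z(s)$ converge absolutely, because $\alpha(y) = O(\sqrt{y})$ near $0$ and $\alpha$ is bounded by $O(\log y)$-type growth at infinity, both dominated by $e^{-x}$. I expect no real obstacle beyond this bookkeeping of absolute convergence and the constant.
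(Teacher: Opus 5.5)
Your argument is the same as the paper's: you substitute the integral defining $\alpha$, interchange the $x$- and $t$-integrals, and evaluate the inner integral as $\Gamma(s+\frac12)(2t+1)^{-s-\frac12}$. Your Tonelli domination at $\sigma=\Re(s)$ is a useful justification of the interchange that the paper omits.

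There are two harmless slips. First, in your second display the power should be $x^{s+\frac12}\,\frac{dx}{x}$ rather than $x^{s-\frac12}\,\frac{dx}{x}$; your next display already uses the correct one. Second, $\alpha(y)$ is not $O(\sqrt{y})$ as $y\to 0$; it behaves like $\frac{1}{4\pi}\log\frac{1}{y}$. This does not matter, since your Tonelli computation already shows that the defining integral of $Z(s)$ converges absolutely.
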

\begin{proof}
Beginning with the definition of $Z(s)$, we compute
\begin{align*}
Z(s) &=  \int_{0}^{\infty} \alpha\left(\frac{2x}{\pi}\right)e^{-x}x^{s}\frac{dx}{x} \\
 &= \frac{1}{\sqrt{8 {\pi}^{3}}}\int_{0}^{\infty} \left(\sqrt{x}\left(\int_{0}^{\infty} t^{-\frac{1}{2}} e^{-2xt} \log (1+t) dt \right)\right) e^{-x}x^{s}\frac{dx}{x}.
\end{align*}
Now applying Fubini's theorem, we get 
\begin{align*}
Z(s)&=  \frac{1}{\sqrt{8 {\pi}^3}} \int_{0}^{\infty} t^{-\frac{1}{2}} \log (1+t)\left(\int_{0}^{\infty} \sqrt{x} e^{-2xt} e^{-x}x^{s}\frac{dx}{x}\right)dt\\
 &=  \frac{1}{\sqrt{8 {\pi}^3}} \int_{0}^{\infty} {t^{-1/2} \log (1+t) }\frac{\Gamma \left(s+\frac{1}{2}\right)}{(2t+1)^{s+\frac{1}{2}}} dt \\
 \end{align*} 
where we've used the change of variables $x(2t+1)\rightarrow y$.

\end{proof}

 \begin{lemma}\label{thm:holo-Z(s)}
$Z(s)$ is holomorphic on $\Re(s) >1$ and it has an analytic continuation to $\Re(s) >0.$
\end{lemma}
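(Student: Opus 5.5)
Our plan is to use the integral representation of Lemma \ref{thm:Z-formula},
\begin{align*}
Z(s)= \frac{\Gamma \left(s+\frac{1}{2} \right)}{\sqrt{8 {\pi}^3}}\, I(s), \qquad I(s):=\int_{0}^{\infty} \frac{t^{-\frac{1}{2}} \log (1+t)}{(2t+1)^{s+\frac{1}{2}}}\, dt ,
\end{align*}
and to show that the right-hand side is holomorphic on $\Re(s)>0$. Since it agrees with $Z(s)$ on $\Re(s)>1$, it then supplies the analytic continuation. The factor $\Gamma(s+\frac12)$ is holomorphic for $\Re(s)>-\frac12$, so everything reduces to showing that $I(s)$ converges absolutely and locally uniformly on $\Re(s)>0$, and that it is holomorphic there.

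For $\sigma=\Re(s)$ we have $|(2t+1)^{-s-\frac12}|=(2t+1)^{-\sigma-\frac12}$. We split the integral at $t=1$.
\begin{itemize}
\item On $[0,1]$: the integrand is bounded by $t^{-\frac12}\log(1+t)\le t^{\frac12}$, which is integrable uniformly in $s$.
\item On $[1,\infty)$: the integrand is bounded by $C\,t^{-1-\sigma}\log(1+t)$. This is integrable whenever $\sigma>0$, with a bound that is uniform for $\sigma\ge\sigma_0>0$.
\end{itemize}
These two bounds give a single integrable majorant on every vertical strip $\sigma\ge\sigma_0>0$.

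For each fixed $t$, the integrand is entire in $s$. Together with the locally uniform majorant, this lets us apply the standard theorem on holomorphy of parameter integrals (via Morera and Fubini, or differentiation under the integral sign by dominated convergence). This shows $I(s)$ is holomorphic on $\Re(s)>0$.

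The holomorphy of $Z$ on $\Re(s)>1$ is then immediate, because the formula of Lemma \ref{thm:Z-formula} holds there. Alternatively, one can argue directly from the defining integral \eqref{eq:Z-def}, using $\alpha(2x/\pi)=O(\sqrt{x})$ as $x\to 0$ and the decay of $e^{-x}$. Uniqueness of analytic continuation then gives the extension to $\Re(s)>0$.

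The only step needing care is the tail estimate near $\sigma\to0^+$. There the $\log(1+t)$ factor is what prevents extending past $\Re(s)=0$, so we must keep $\sigma_0>0$ fixed when producing the majorant. Everything else is routine.
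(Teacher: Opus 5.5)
Your proof is correct and follows essentially the paper's route. Both arguments use the representation of Lemma~\ref{thm:Z-formula} and show the integral converges absolutely and locally uniformly on $\Re(s)>0$: the paper sums the holomorphic pieces $\int_N^{N+1}$ and uses uniform convergence on compacta, while you use a single majorant. The paper obtains holomorphy on $\Re(s)>1$ directly from the Mellin definition via $\alpha(2x/\pi)e^{-x}=O(x^{-1})$, whereas you read it off the formula.

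One correction to your non-essential aside: $\alpha(2x/\pi)$ is not $O(\sqrt{x})$ as $x\to0^{+}$. It grows like $\frac{1}{4\pi}\log(1/x)$, consistent with $Z(s)\sim\frac{1}{4\pi s^{2}}$ as $s\to0^{+}$, so that alternative is false as stated. This is harmless because your main argument never uses it.
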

\begin{proof}
  Let $f(x) = \alpha\left(\frac{2x}{\pi}\right) e^{-x}$. Using the Lebesgue dominated convergence theorem, we find that $f(x)$ is continuous on $(0, \infty)$. 
  
Now note that as $ x \rightarrow 0$,
\begin{align*}
     &\int_{0}^{\infty} {t^{-1/2} e^{-2xt}\log (1+t)}dt 
     = x^{-\frac{1}{2}} \int_{0}^{\infty} u^{-\frac{1}{2}} e^{-2u} \log (1+\frac{u}{x}) du = \mathcal{O}(x^{-\frac{3}{2}}).
\end{align*}

It follows that $f(x) = O(x^{-1})$ as $x \rightarrow 0$.
On the other hand, $f(x)$ tends to $0$ as $x \to \infty$.  Therefore  by using Proposition 3.1.22 of \cite{cohen2017modular},
we deduce that $ Z(s)$ converges absolutely $\Re(s) > 1$ and is holomorphic in this region. \\

For $\Re(s) >0$, we define $Z(s)$ by the formula in Lemma \ref{thm:Z-formula}, 
\begin{align*}
    Z(s):= \frac{\Gamma(s+\frac{1}{2})}{\sqrt{8 \pi^3}} \int_{0}^{\infty} \frac{t^{-1/2} \log(1+t)}{(2t+1)^{s+\frac{1}{2}}}dt.
\end{align*}
 To see that the function on the right hand side is holomorphic, note that 
  \begin{align*}
      Z(s)&=  \frac{\Gamma(s+\frac{1}{2})}{\sqrt{8 \pi^3}} \sum_{N \geq 0} \int_{N}^{N+1} \frac{t^{-1/2} \log(1+t)}{(2t+1)^{s+\frac{1}{2}}}dt
  \end{align*}
Since $F_{N}(s)=  \int_{N}^{N+1} \frac{t^{-1/2}\log(1+t)}{(2t+1)^{s+\frac{1}{2}}}dt$ is holomorphic for each $N$ on $\Re(s) >0$, and the series $f(s):=\sum_{N \geq 0}F_{N}(s)$ converges uniformly on any compact subset in the region $\{s \in \mathbb{C} :\Re(s) >0\}$, $f(s)$ is holomorphic on $\Re(s)>0.$  This completes the proof.
\end{proof}

\begin{lemma}\label{thm:Z-formula-2}
For $\Re(s) > 0$,
$$
Z(s) = \frac{ \Gamma ( s + \frac{1}{2} )}{4 \pi^{\frac{3}{2}}} \int_0^{\infty} \mathbf{B}_{\frac{1}{2x+1}} \left(s, \frac{1}{2} \right) \frac{dx}{1+x}
$$
where $\mathbf{B}_{z}(a,b) = \int_{0}^{z}t^{a-1}(1-t)^{b-1}dt $ is the incomplete beta function.
\end{lemma}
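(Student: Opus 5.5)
The plan is to start from the integral representation of Lemma \ref{thm:Z-formula}, which by the proof of Lemma \ref{thm:holo-Z(s)} is also the definition of $Z(s)$ on the whole half-plane $\Re(s)>0$:
\[
Z(s)=\frac{\Gamma(s+\frac12)}{\sqrt{8\pi^3}}\int_0^\infty \frac{t^{-1/2}\log(1+t)}{(2t+1)^{s+\frac12}}\,dt .
\]
The idea is to write the logarithm as an integral and then interchange the order of integration.

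\textbf{Step 1: rewrite the logarithm and swap.} Write $\log(1+t)=\int_0^t \frac{dx}{1+x}$. Substituting this and applying Fubini gives
\[
\int_0^\infty \frac{t^{-1/2}\log(1+t)}{(2t+1)^{s+\frac12}}\,dt=\int_0^\infty \frac{1}{1+x}\left(\int_x^\infty t^{-1/2}(2t+1)^{-s-\frac12}\,dt\right)dx .
\]

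\textbf{Step 2: evaluate the inner integral.} Substitute $u=\frac{1}{2t+1}$, so that $t=\frac{1-u}{2u}$ and $dt=-\frac{du}{2u^2}$. Then
\[
t^{-1/2}=(2u)^{1/2}(1-u)^{-1/2},\qquad (2t+1)^{-s-\frac12}=u^{s+\frac12}.
\]
Hence the integrand becomes $2^{-1/2}u^{s-1}(1-u)^{-1/2}\,du$, and the range $t\in[x,\infty)$ becomes $u\in\bigl(0,\frac{1}{2x+1}\bigr]$. The inner integral is therefore
\[
2^{-1/2}\,\mathbf{B}_{\frac{1}{2x+1}}\left(s,\tfrac12\right).
\]

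\textbf{Step 3: collect constants.} The total constant is
\[
\frac{1}{\sqrt{8\pi^3}}\cdot 2^{-1/2}=\frac{1}{4\pi^{3/2}},
\]
which is exactly the stated formula.

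\textbf{Justifying Fubini.} This is the only point needing care, and it is routine. Put $\sigma=\Re(s)>0$. The absolute value of the integrand is the same expression with $s$ replaced by $\sigma$, and it is nonnegative, so Tonelli applies. It therefore suffices to check that the resulting iterated integral is finite. This follows from two bounds:
\begin{itemize}
\item Since $\frac{1}{2x+1}\le 1$, we have $\mathbf{B}_{\frac{1}{2x+1}}(\sigma,\frac12)\le \mathbf{B}(\sigma,\frac12)<\infty$, which handles bounded $x$.
\item As $x\to\infty$, $\mathbf{B}_{\frac{1}{2x+1}}(\sigma,\frac12)\sim \sigma^{-1}(2x+1)^{-\sigma}$, so $\int^\infty \frac{dx}{(1+x)x^{\sigma}}$ converges.
\end{itemize}
The same estimate shows that the right-hand side of the lemma is holomorphic for $\Re(s)>0$. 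Thus the identity holds on the full half-plane, consistent with the analytic continuation in Lemma \ref{thm:holo-Z(s)}.
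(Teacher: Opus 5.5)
Your proposal is correct and follows essentially the same route as the paper: write $\log(1+t)=\int_0^t\frac{dx}{1+x}$, swap the integrals, and turn the inner integral into an incomplete beta function via $u=\frac{1}{2t+1}$. The paper leaves implicit the constant $2^{-1/2}$ (which combines with $\frac{1}{\sqrt{8\pi^3}}$ to give $\frac{1}{4\pi^{3/2}}$) and the justification of the interchange; you supply both, and the Tonelli argument is valid on all of $\Re(s)>0$.
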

\begin{proof}
 Note that 
\begin{align*}
\log (1+t) = \int_{0}^{t} \frac{dx}{1+x}
\end{align*}
Then as an application of Fubini's theorem, we can write $Z(s)$ as follows
\begin{align*}
Z(s)&=  \frac{\Gamma \left(s+\frac{1}{2}\right)}{\sqrt{8 {\pi}^3}} \int_{0}^{\infty} \frac{t^{-1/2}}{(2t+1)^{s+\frac{1}{2}}} \left(\int_{0}^{t} \frac{dx}{1+x}\right) dt\\
&= \frac{\Gamma \left(s+\frac{1}{2}\right)}{\sqrt{8 {\pi}^3}} \int_{0}^{\infty} \left(\int_{x}^{\infty}  \frac{t^{-1/2}}{(2t+1)^{s+\frac{1}{2}}} dt \right)  \frac{dx}{1+x}\\
\end{align*}
A change of variables allows us to rewrite the inner integral in terms of the incomplete beta function, proving the result.
 \end{proof}

\begin{proposition}\label{thm:finite-sum-formula}

Let $\alpha, \rho, r$ be positive real numbers with $\rho > \frac{1}{2}$.  Assume that on the line $R(s) = \alpha, L(s) = \sum_{n=1}^{\infty} c(n) n^{-s}$ is absolutely convergent. Then we have
\begin{align}
 \sum_{n \leq r} c(n) h_{\rho}(n,r) =  \int_{(\alpha)} \frac{Z(s) L(s)}{\Gamma(\rho+1+s)} r^{s}  ds 
\end{align}
where 
\begin{align*}
h_{\rho}(n,r) = \frac{1}{4 \pi^{\frac{3}{2}}} \frac{2 \pi i}{\Gamma(\rho + \frac{1}{2})} \left(\frac{r-n}{r} \right)^{\rho} \int_{0}^{1} \log \left(1+\frac{(t-1)(n-r)}{2n}\right)   t^{\rho - \frac{1}{2}} (1-t)^{-\frac{1}{2}} dt. 
\end{align*}

\end{proposition}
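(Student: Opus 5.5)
The plan is to substitute the integral representation of $Z(s)$ from Lemma \ref{thm:Z-formula} (valid on $\Re(s)=\alpha>0$ by Lemma \ref{thm:holo-Z(s)}), expand $L(s)$ as its Dirichlet series, interchange the order of summation and integration, and evaluate the resulting vertical integral explicitly for each $n$. Writing $y=r/n$, the $n$-th term becomes
\begin{align*}
c(n)\,\frac{1}{\sqrt{8\pi^3}}\int_0^\infty t^{-\frac12}\log(1+t)(2t+1)^{-\frac12}\left(\int_{(\alpha)} \frac{\Gamma(s+\frac12)}{\Gamma(\rho+1+s)}\left(\frac{y}{2t+1}\right)^{s} ds\right) dt .
\end{align*}

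\textbf{Step 1: justify the interchanges.} On the line $\Re(s)=\alpha$, the $t$-integral in Lemma \ref{thm:Z-formula} is bounded in absolute value by its value at $s=\alpha$. Hence $|Z(s)| \ll_\alpha |\Gamma(s+\frac12)|$ there. By Stirling's formula, $|\Gamma(s+\frac12)/\Gamma(\rho+1+s)| \asymp |s|^{-\rho-\frac12}$, which is integrable on the line precisely because $\rho>\frac12$. Combined with the absolute convergence of $\sum |c(n)|n^{-\alpha}$, Fubini's theorem lets us swap the sum over $n$, the $t$-integral and the $s$-integral. The integrand in $(n,t,s)$ is dominated by
\begin{align*}
|c(n)|n^{-\alpha} r^\alpha\, t^{-\frac12}\log(1+t)(2t+1)^{-\frac12-\alpha}|s|^{-\rho-\frac12},
\end{align*}
which is integrable since $\alpha>0$.

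\textbf{Step 2: evaluate the inner Mellin--Barnes integral.} I will use the classical formula, obtained by shifting the contour left (for $X>1$, summing residues at the poles of $\Gamma(s+a)$, i.e.\ a binomial series) or right (for $X<1$, where there are no poles):
\begin{align*}
\frac{1}{2\pi i}\int_{(\alpha)} \frac{\Gamma(s+a)}{\Gamma(s+b)} X^{s}\,ds = \begin{cases} \dfrac{X^{-a}(1-X^{-1})^{b-a-1}}{\Gamma(b-a)}, & X>1,\\[2mm] 0, & 0<X<1, \end{cases}
\end{align*}
valid for $\Re(b-a)>0$. We apply it with $a=\frac12$, $b=\rho+1$ and $X=y/(2t+1)$. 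Since $\rho>\frac12$, one can alternatively close the contour directly, because the integrand decays.

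\textbf{Step 3: assemble the terms.} The vanishing for $X<1$ truncates the $t$-integral to $0\le t<(r-n)/(2n)$. In particular only $n<r$ contribute, and the $n=r$ term is $0$, consistent with the factor $(r-n)^\rho$ in $h_\rho$. Substituting $X$ turns the $t$-integrand into
\begin{align*}
t^{-\frac12}\log(1+t)\,\frac{n^{1/2}}{r^{1/2}}\,\frac{1}{\Gamma(\rho+\frac12)}\left(1-\frac{n(2t+1)}{r}\right)^{\rho-\frac12}.
\end{align*}
The change of variables $t=(1-u)(r-n)/(2n)$ then extracts the factor $((r-n)/r)^{\rho}$. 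It also produces $u^{\rho-\frac12}(1-u)^{-\frac12}$ and turns the logarithm into $\log\!\left(1+\frac{(u-1)(n-r)}{2n}\right)$. Collecting the constants $2\pi i/\sqrt{8\pi^3}$ together with the factors of $2$ from the Jacobian should give exactly $\frac{2\pi i}{4\pi^{3/2}\Gamma(\rho+\frac12)}$, i.e.\ $h_\rho(n,r)$.

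I expect the main obstacle to be the bookkeeping in Step 3, specifically getting the power of $2$ and the $n^{1/2}$ factors to cancel exactly. The analytic justification in Steps 1 and 2 is comparatively routine once the bound $|Z(\alpha+i\tau)|\ll|\Gamma(\alpha+\frac12+i\tau)|$ is in hand.
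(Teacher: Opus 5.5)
Your proof is correct, and the constant-chasing you flagged does close. After $t=(1-u)(r-n)/(2n)$ the powers of $n$ cancel, the powers of $r$ and $r-n$ collapse to $2^{-1/2}\bigl((r-n)/r\bigr)^{\rho}$, and $\sqrt{8\pi^{3}}\cdot\sqrt{2}=4\pi^{3/2}$, so you land exactly on $h_\rho(n,r)$.

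The skeleton is the same as the paper's: integral representation of $Z$, Fubini, the Bateman inverse Mellin formula for $\Gamma(s+a)/\Gamma(s+b)$, then a substitution. The difference is which representation of $Z$ you feed in.

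\textbf{The paper's route.} It works from Lemma \ref{thm:Z-formula-2}. It first unpacks $\log(1+t)=\int_0^t\frac{dx}{1+x}$ to write $Z$ through $\mathbf{B}_{1/(2u+1)}(s,\tfrac12)=\int_0^{1/(1+2u)}t^{s-1}(1-t)^{-1/2}\,dt$. After the Mellin--Barnes step it obtains the incomplete beta function $\mathbf{B}_{(y-1-2u)/(y-1)}(\rho+\tfrac12,\tfrac12)$. It then needs one more Fubini interchange to fold the $u$-integral back into the logarithm.

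\textbf{Your route.} You use Lemma \ref{thm:Z-formula} directly, where the $s$-dependence is already the pure power $(2t+1)^{-s}$. The logarithm is never unpacked, and a single substitution finishes the computation.

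\textbf{Justification of the interchanges.} Yours is more self-contained. The trivial bound $|Z(\alpha+iT)|\le C_\alpha|\Gamma(\alpha+\tfrac12+iT)|$ plus Stirling uses the hypothesis $\rho>\tfrac12$ exactly. It also makes the Mellin--Barnes integral absolutely convergent, since $b-a=\rho+\tfrac12>1$. The paper's sum--integral interchange instead leans on the decay estimate of Lemma \ref{thm:Z-bound}, whose proof treats the $z\to0^{+}$ asymptotic of $\mathbf{B}_z$ as if it were a uniform inequality.

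The paper's detour through incomplete beta functions mirrors the form of $g_\rho$ from \cite{bdgrt}. For this proposition, though, your route is shorter and cleaner.
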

\begin{proof}
Here using Lemma \ref{thm:Z-bound} and Stirling's formula \protect{\cite[(5.11.9)]{NIST:DLMF}}, we deduce that the integral $\int_{(\alpha)} \frac{Z(s)}{\Gamma(\rho+1+s)} r^{s}  ds$ is absolutely convergent for $ \rho > - \frac{1}{2}$ and with this and the absolute convergence of $L(s)$, we can interchange the summation and integration as follows:

\begin{align}\label{int-sum of Z(s)}
 \int_{(\alpha)}\frac{Z(s)L(s)}{\Gamma(\rho+1+s)} r^s  ds = \sum_{n=1}^{\infty} c(n) \int_{(\alpha)} \frac{Z(s)}{\Gamma(\rho+1+s)} \left(\frac{r}{n}\right)^{s}  ds 
\end{align}
Let $y = \frac{r}{n}$ with $y>1$. Then using Lemma \ref{thm:Z-formula-2}, we have
\begin{align*}
 \int_{(\alpha)} \frac{Z(s)}{\Gamma(\rho+1+s)} y^{s}  ds = \int_{(\alpha)} \left( \frac{ \Gamma ( s + \frac{1}{2} )}{4 \pi^{\frac{3}{2}}} \int_0^{\infty} \mathbf{B}_{\frac{1}{2u+1}} \left(s, \frac{1}{2} \right) \frac{du}{1+u} \right) \frac{y^{s} ds}{\Gamma(1+\rho+s)} 
 \end{align*}
We will use Fubini's theorem to swap the order of summation. To justify this, first note that we have the bound
$$
| \mathbf{B}_{\frac{1}{2u+1}} \left(s, \frac{1}{2} \right) | \le \mathbf{B}_{\frac{1}{2u+1}} \left(\alpha, \frac{1}{2} \right) < \infty
$$
for $\alpha > 0$.
So we compute
\begin{align*}
& \int_{(\alpha)} \int_{0}^{\infty} | \frac{ \Gamma ( s + \frac{1}{2} )}{\Gamma(1+\rho+s)} | \cdot  |\mathbf{B}_{\frac{1}{2u+1}} \left(s, \frac{1}{2} \right) \frac{y^{s}}{1+u}|  du ds \\
 & \le \int_{-\infty}^{\infty} \int_{0}^{\infty} | \frac{ \Gamma ( \alpha + iT + \frac{1}{2} )}{\Gamma(1+\rho+\alpha+iT)} | \cdot  \mathbf{B}_{\frac{1}{2u+1}} \left( \alpha , \frac{1}{2} \right) \frac{y^{\alpha}}{1+u}  du dT \\
  & = y^{\alpha} \int_{-\infty}^{\infty}  | \frac{ \Gamma ( s + \frac{1}{2} )}{\Gamma(1+\rho+s)} | dT \cdot \int_{0}^{\infty} \cdot  \mathbf{B}_{\frac{1}{2u+1}} \left( \alpha , \frac{1}{2} \right) \frac{du}{1+u} 
\end{align*}
Stirling's formula \protect{\cite[(5.11.9)]{NIST:DLMF}} ensures the convergence of the first integral for $\rho > \frac{1}{2}$, and the convergence of the second integral follows from the estimate $\mathbf{B}_{\frac{1}{2u+1}} \left( \alpha , \frac{1}{2} \right) = O(u^{-1})$ as $u \to \infty$. 
Now applying Fubini's Theorem, we obtain
 \begin{align*}
 \int_{(\alpha)} \frac{Z(s)}{\Gamma(\rho+1+s)} y^{s}  ds &= \frac{1}{4 \pi^{\frac{3}{2}}} \int_{(\alpha)} \int_{0}^{\infty} \frac{ \Gamma ( s + \frac{1}{2} )}{\Gamma(1+\rho+s)}\mathbf{B}_{\frac{1}{2u+1}} \left(s, \frac{1}{2} \right) \frac{y^{s}}{1+u} du ds\\
&=  \frac{1}{4 \pi^{\frac{3}{2}}}\int_{0}^{\infty}   \int_{0}^{\frac{1}{1+2u}} t^{-1} (1-t)^{-\frac{1}{2}} \left( \int_{(\alpha)}  \frac{ \Gamma ( s + \frac{1}{2} )}{\Gamma(1+\rho+s)} (ty)^{s} ds \right) dt \frac{du}{1+u}
\end{align*}
Now it is known from Section 7.3, eq. 20 of \cite{bateman1954tables} that for $\gamma , a >0$, 
\begin{align*}
\frac{1}{2 \pi i} \int_{(\gamma)} \frac{\Gamma(s)}{\Gamma(s+a)} y^{-s} ds = \frac{(1-y)^{a-1}}{\Gamma(a)} \mathbf{1}_{(0,1)} (y) 
\end{align*}
where $\mathbf{1}_{A}$ is the indicator function for a set A. \\
Therefore, 
\begin{align*}
 \int_{(\alpha)}  \frac{ \Gamma ( s + \frac{1}{2} )}{\Gamma(1+\rho+s)} (ty)^{s} ds = \int_{(\alpha +\frac{1}{2})}  \frac{ \Gamma (v)}{\Gamma( v+\rho+\frac{1}{2})} \left(\frac{1}{ty}\right)^ {- (v-\frac{1}{2})}dv  \\
=   \left(\frac{1}{ty}\right) ^{\frac{1}{2}}  \frac{2 \pi i}{\Gamma(\rho + \frac{1}{2})}  \left(1- \frac{1}{ty} \right)^{\rho +\frac{1}{2}-1} \mathbf{1}_{(0,1)} \left(\frac{1}{ty}\right)  
\end{align*}
for $\alpha > -\frac{1}{2}$ and $\rho > -\frac{1}{2}$.\\
Then, 
\begin{align*}
& \int_{(\alpha)} \frac{Z(s)}{\Gamma(\rho+1+s)} y^{s}  ds = \\ 
&\frac{1}{4 \pi^{\frac{3}{2}}}\int_{0}^{\infty} \left(  \frac{2 \pi i}{\Gamma(\rho + \frac{1}{2})} \int_{0}^{\frac{1}{1+2u}} t^{-1} (1-t)^{-\frac{1}{2}}  \left(\frac{1}{ty}\right) ^{\frac{1}{2}} \left(1- \frac{1}{ty} \right)^{\rho-\frac{1}{2}}\mathbf{1}_{(0,1)} \left(\frac{1}{ty}\right)  dt \right) \frac{du}{1+u} \\
&=\frac{1}{4 \pi^{\frac{3}{2}}}\int_{0}^{\infty} \frac{2 \pi i}{\Gamma(\rho + \frac{1}{2})} \left(\int_{\frac{1}{y}}^{\frac{1}{1+2u}} t^{-1} (1-t)^{-\frac{1}{2}}  \left(\frac{1}{ty}\right) ^{\frac{1}{2}} \left(1- \frac{1}{ty} \right)^{\rho-\frac{1}{2}} dt  \right)\frac{du}{1+u}\\
\end{align*}
Now by using the change of variables $u = \frac{y}{1-y} \cdot \left( 1 - \frac{1}{ty} \right)$, we compute
\begin{align*}
\int_{\frac{1}{y}}^{\frac{1}{1+2u}} t^{-1} (1-t)^{-\frac{1}{2}}  \left(\frac{1}{ty}\right) ^{\frac{1}{2}} \left(1- \frac{1}{ty} \right)^{\rho-\frac{1}{2}} dt=  \left(\frac{y-1}{y} \right)^{\rho}  \mathbf{B}_{\frac{y-1-2u}{y-1}} \left(\rho+\frac{1}{2}, \frac{1}{2} \right)
\end{align*}
Therefore,  
\begin{align*}
 \int_{(\alpha)} \frac{Z(s)}{\Gamma(\rho+1+s)} y^{s}  ds = \frac{1}{4 \pi^{\frac{3}{2}}} \frac{2 \pi i}{\Gamma(\rho + \frac{1}{2})} \int_{0}^{\infty}  \left(\frac{y-1}{y} \right)^{\rho}  \mathbf{B}_{\frac{y-1-2u}{y-1}} \left(\rho+\frac{1}{2}, \frac{1}{2} \right) \frac{du}{1+u} \\
= \frac{1}{4 \pi^{\frac{3}{2}}} \frac{2 \pi i}{\Gamma(\rho + \frac{1}{2})} \left(\frac{y-1}{y} \right)^{\rho}  \int_{0}^{\infty} \mathbf{B}_{\frac{y-1-2u}{y-1}} \left(\rho+\frac{1}{2}, \frac{1}{2} \right) \frac{du}{1+u}\\
=  \frac{1}{4 \pi^{\frac{3}{2}}} \frac{2 \pi i}{\Gamma(\rho + \frac{1}{2})} \left(\frac{y-1}{y} \right)^{\rho} \int_{0}^{\infty} \int_{0}^{1- \frac{2u}{y-1}} t^{\rho - \frac{1}{2}} (1-t)^{-\frac{1}{2}} \frac{dt du}{1+u}
\end{align*}
Here since the right hand side is absolutely convergent for $ \rho > -\frac{1}{2}$, we can swap the two integrals and get the following,
\begin{align*}
 \int_{(\alpha)} \frac{Z(s)}{\Gamma(\rho+1+s)} y^{s} ds =  \frac{1}{4 \pi^{\frac{3}{2}}} \frac{2 \pi i}{\Gamma(\rho + \frac{1}{2})} \left(\frac{y-1}{y} \right)^{\rho} \int_{0}^{1} \left(\int_{0}^{\frac{(t-1)(1-y)}{2}} \frac{du}{1+u} \right) t^{\rho - \frac{1}{2}} (1-t)^{-\frac{1}{2}} dt\\
= \frac{1}{4 \pi^{\frac{3}{2}}} \frac{2 \pi i}{\Gamma(\rho + \frac{1}{2})} \left(\frac{y-1}{y} \right)^{\rho} \int_{0}^{1} \log \left(1+\frac{(t-1)(1-y)}{2}\right)   t^{\rho - \frac{1}{2}} (1-t)^{-\frac{1}{2}} dt 
\end{align*}
Now substituting $y = \frac{r}{n}$, we have
\begin{align*}
 \int_{(\alpha)} \frac{Z(s)}{\Gamma(\rho+1+s)} \left(\frac{r}{n}\right)^{s} ds =\frac{1}{4 \pi^{\frac{3}{2}}} \frac{2 \pi i}{\Gamma(\rho + \frac{1}{2})} \left(\frac{r-n}{r} \right)^{\rho} \int_{0}^{1} \log \left(1+\frac{(t-1)(n-r)}{2n}\right)   t^{\rho - \frac{1}{2}} (1-t)^{-\frac{1}{2}} dt 
\end{align*}
as required. 
\end{proof}

\begin{lemma}\label{thm:finite-sum-bound}
Let $\alpha > 0 , \rho > \frac{1}{2}$ be such that $L(s) = \sum_{n\geq1} a(n) n^{-s}$ is convergent absolutely on $Re(s) = \alpha$. Then we have 
\begin{align}
 \sum_{n \leq x} a(n) h_{\rho}(n,x) = \mathcal{O}(x^\alpha)  \hspace{1cm}  \ x \rightarrow \infty. 
\end{align}
\end{lemma}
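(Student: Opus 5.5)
By Proposition \ref{thm:finite-sum-formula}, applied with $c(n)=a(n)$ and $r=x$, the left-hand side is exactly the contour integral
\begin{align*}
\sum_{n \leq x} a(n) h_{\rho}(n,x) = \int_{(\alpha)} \frac{Z(s) L(s)}{\Gamma(\rho+1+s)} x^{s} \, ds .
\end{align*}
The plan is to bound this integral directly on the line $\Re(s)=\alpha$. No contour shift is needed: writing $s=\alpha+iT$, we have $|x^{s}| = x^{\alpha}$, so it suffices to show that
\begin{align*}
\int_{-\infty}^{\infty} \left| \frac{Z(\alpha+iT)\, L(\alpha+iT)}{\Gamma(\rho+1+\alpha+iT)} \right| dT < \infty
\end{align*}
with a bound independent of $x$. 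The estimate $\mathcal{O}(x^\alpha)$ then follows immediately.

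The factor $L(\alpha+iT)$ is harmless. Since $L$ converges absolutely on $\Re(s)=\alpha$, we have $|L(\alpha+iT)| \le \sum_{n} |a(n)| n^{-\alpha} =: C_L$ uniformly in $T$. It therefore remains to control $\int |Z(\alpha+iT)/\Gamma(\rho+1+\alpha+iT)|\, dT$.

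For $Z$ we use Lemma \ref{thm:Z-formula-2}, exactly as in the proof of Proposition \ref{thm:finite-sum-formula}. Since $|\mathbf{B}_{\frac{1}{2u+1}}(s,\tfrac12)| \le \mathbf{B}_{\frac{1}{2u+1}}(\alpha,\tfrac12)$ and $\mathbf{B}_{\frac{1}{2u+1}}(\alpha,\tfrac12)=O(u^{-\alpha})$ as $u\to\infty$, the $u$-integral $\int_0^\infty \mathbf{B}_{\frac{1}{2u+1}}(\alpha,\tfrac12)\frac{du}{1+u}$ is a finite constant $C_\alpha$. This gives
\begin{align*}
|Z(\alpha+iT)| \le \frac{C_\alpha}{4\pi^{3/2}} \, |\Gamma(\alpha+\tfrac12+iT)| .
\end{align*}

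The one step needing care is the decay in $T$ of the gamma ratio. By Stirling's formula \cite[(5.11.9)]{NIST:DLMF},
\begin{align*}
\left|\frac{\Gamma(\alpha+\frac12+iT)}{\Gamma(\rho+1+\alpha+iT)}\right| \asymp |T|^{-\rho-\frac12} \quad \text{as } |T|\to\infty .
\end{align*}
This is integrable precisely when $\rho>\tfrac12$, which is the hypothesis of the lemma. On any compact range of $T$ the ratio is continuous, since $\Gamma(\rho+1+s)$ has no zeros, so that part contributes a finite constant. This Stirling estimate, together with the uniform bound on $Z$ through the incomplete beta representation, is where the proof's content lies; it is the same estimate already used to justify the interchanges in Proposition \ref{thm:finite-sum-formula}.

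Combining the bounds gives
\begin{align*}
\left|\sum_{n \leq x} a(n) h_{\rho}(n,x)\right| \le C_L \, \frac{C_\alpha}{4\pi^{3/2}} \, x^{\alpha} \int_{-\infty}^{\infty}\left|\frac{\Gamma(\alpha+\frac12+iT)}{\Gamma(\rho+1+\alpha+iT)}\right| dT = \mathcal{O}(x^\alpha).
\end{align*}
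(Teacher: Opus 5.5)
Your proposal is correct and is essentially the paper's proof: invoke Proposition~\ref{thm:finite-sum-formula}, pull out $|x^{s}|=x^{\alpha}$, bound $|L(\alpha+iT)|$ uniformly by $\sum_n |a(n)|n^{-\alpha}$, and use the absolute integrability of $Z(s)/\Gamma(\rho+1+s)$ on $\Re(s)=\alpha$. The one difference is in how that integrability is justified. You use the incomplete-beta bound $|Z(\alpha+iT)|\ll|\Gamma(\alpha+\frac12+iT)|$, the same estimate as the Fubini step of the Proposition, and this is where $\rho>\frac12$ enters. The paper instead appeals to the sharper decay of Lemma~\ref{thm:Z-bound} to claim convergence for $\rho>-\frac12$, and it writes $x^{\alpha}\left|\int\cdot\right|$ where you correctly have $x^{\alpha}\int|\cdot|$.
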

\begin{proof}
From Proposition \ref{thm:finite-sum-formula}, we have
\begin{align*}
\big|\sum_{n \leq x} a(n) h_{\rho}(n,x)\big| &=  \left|\int_{(\alpha)} \frac{Z(s) L(s)}{\Gamma(\rho+1+s)} x^{s}  ds\right| \\
& \leq x^{\alpha} \left|\int_{(\alpha)} \frac{Z(s) L(s)}{\Gamma(\rho+1+s)} ds\right|.
\end{align*}
 Now since $|L(s)| = \mathcal{O}(1)$ and the integral $\int_{(\alpha)} \frac{Z(s)}{\Gamma(\rho+1+s)} ds$ is absolutely convergent for $\rho >- \frac{1}{2}$, we get the required bound.
\end{proof}

\subsection{Proof of Theorem \ref{thm:functional-equation}}

The first step is to compute the Mellin transforms of all the nonconstant Fourier coefficients of $F$. 
First, we have 
 \begin{align}
 c_{F}(n) \int_{0}^{\infty} e^{\frac{- 2 \pi nt}{\sqrt{N}}}t^{s}\frac{dt}{t} = \left(\frac{2 \pi}{\sqrt{N}}\right)^{-s} \Gamma(s) \frac{c_{F}(n)}{n^{s}}.
\end{align}
and
\begin{align*}
b_{F}(-n) \int_{0}^{\infty} \beta\left(\frac{4nt}{\sqrt{N}}\right)e^{\frac{ 2 \pi nt}{\sqrt{N}}}t^{s}\frac{dt}{t}= \left(\frac{2 \pi n}{\sqrt{N}}\right)^{-s} b_{F}(-n) \left(\int_{0}^{\infty} \beta\left(\frac{2x}{\pi}\right)e^{x}x^{s}\frac{dx}{x}\right)
\end{align*}
where we use the change of variable $ \frac{2 \pi nt}{\sqrt{N}} \rightarrow x .$ Applying Lemma \ref{thm:W-formula}, we obtain
\begin{align}
b_{F}(-n) \int_{0}^{\infty} \beta\left(\frac{4nt}{\sqrt{N}}\right)e^{\frac{ 2 \pi nt}{\sqrt{N}}}t^{s}\frac{dt}{t}= \left(\frac{2 \pi n}{\sqrt{N}}\right)^{-s} b_{F}(-n)\frac{1}{\sqrt{\pi}} W_{\frac{1}{2}}(s).
\end{align}

Similarly, using the change of variables $ \frac{2 \pi nt}{\sqrt{N}} \rightarrow x$, we get
\begin{align*}
a_{F}(n) \int_{0}^{\infty} \alpha\left(\frac{4nt}{\sqrt{N}}\right)e^{\frac{- 2 \pi nt}{\sqrt{N}}}t^{s}\frac{dt}{t}= \left(\frac{2 \pi n}{\sqrt{N}}\right)^{-s} a_{F}(n)\left( \int_{0}^{\infty} \alpha\left(\frac{2x}{\pi}\right)e^{-x}x^{s}\frac{dx}{x}\right)
\end{align*}

 Therefore, using Lemma \ref{thm:Z-formula}, 
 \begin{align*}
 \left(\frac{2 \pi n}{\sqrt{N}}\right)^{-s} a_{F}(n) Z(s) &=  \left(\frac{2 \pi n}{\sqrt{N}}\right)^{-s} a_{F}(n)  \int_{0}^{\infty} \alpha\left(\frac{2x}{\pi}\right)e^{-x}x^{s}\frac{dx}{x } \\
 &= a_{F}(n) \int_{0}^{\infty} \alpha\left(\frac{4nt}{\sqrt{N}}\right)e^{\frac{- 2 \pi nt}{\sqrt{N}}}t^{s}\frac{dt}{t}. 
 \end{align*}

Now for $Re(s)> \alpha +1$ we have 
\begin{align*} 
& \int_{0}^{\infty} \Bigg( F\left(\frac{it}{\sqrt{N}}\right) - d_{F}(0)- d_{F}(1) \log\left(\frac{t}{\sqrt{N}}\right)- d_{F}(2)\left(\frac{t}{\sqrt{N}}\right)^{\frac{1}{2}}  \Bigg) t^{s} \frac{dt}{t}  \\ = 
& \int_{0}^{\infty} \left( \sum_{n=1}^{\infty} c_{F}(n) e^{\frac{- 2 \pi nt}{\sqrt{N}}}\right)t^{s}\frac{dt}{t} + \int_{0}^{\infty} \left( \sum_{n=1}^{\infty} b_{F}(-n) \beta(4nt/{\sqrt{N}}) e^{\frac{2 \pi nt}{\sqrt{N}}}\right)t^{s}\frac{dt}{t} +\\ & \int_{0}^{\infty} \left( \sum_{n=1}^{\infty} a_{F}(n) \alpha{(4nt/{\sqrt{N}})} e^{\frac{- 2 \pi nt}{\sqrt{N}}}\right)t^{s}\frac{dt}{t} 
\\ = & \left(\frac{\sqrt{N}}{2 \pi}\right)^{s} \left[\Gamma(s)L^{+}(F,s) + \frac{1}{\sqrt{\pi}} W_{\frac{1}{2}}(s)L^{-}(F,s)+ Z(s)L^{*}(F,s)\right] = \Lambda_{N}(F,s).
\end{align*} 
Now, let $F^*(\tau)= F(\tau) - d_{F}(0) 
- d_{F}(1) \log(y) - d_{F}(2)y^{\frac{1}{2}}$ and $G^*(\tau) = G(\tau) - d_{G}(0) 
- d_{G}(1) \log(y)-d_{G}(2)y^{\frac{1}{2}}.$
 \\
Now for $Re(s) > \alpha +1$, we find that
 
\begin{align}
\Lambda_{N}(F,s)= \int_{0}^{\infty}F^*\left(\frac{it}{\sqrt{N}}\right)t^{s} \frac{dt}{t}
= \int_{0}^{1} + \int_{1}^{\infty}F^*\left(\frac{it}{\sqrt{N}}\right)t^{s} \frac{dt}{t}.
\end{align}
Now,
\begin{align*}
\int_{0}^{1} F^*\left(\frac{it}{\sqrt{N}}\right)t^{s} \frac{dt}{t} &= \int_{1}^{\infty}F\left(\frac{i}{\sqrt{N}t}\right)t^{-s} \frac{dt}{t} -  \frac{d_{F}(0)}{s}+ d_{F}(1) \left(\frac{1}{s^{2}} +\frac{log{\sqrt{N}}}{s}\right)-  \notag \\ & \quad \frac{d_{F}(2)}{N^{1/4} (s+\frac{1}{2})}.
\end{align*}
On the other hand, since $F\left(\frac{i}{\sqrt{N}t}\right)=(it)^\frac{1}{2}  G\left(\frac{it}{\sqrt{N}}\right),$ we get 
\begin{align*}
\int_{1}^{\infty}F\left(\frac{i}{\sqrt{N}t}\right)t^{-s-1} dt=  i^\frac{1}{2} \int_{1}^{\infty} G\left(\frac{it}{\sqrt{N}}\right)t^{-s-\frac{1}{2}}dt.
\end{align*}
Therefore, 
 \begin{align}\label{eq:Lambda-formula}
\Lambda_{N}(F,s) &= \int_{1}^{\infty}F^*\left(\frac{it}{\sqrt{N}}\right)t^{s} \frac{dt}{t} + i^{\frac{1}{2}} \int_{1}^{\infty} G\left(\frac{it}{\sqrt{N}}\right)t^{-s-\frac{1}{2}}dt  - \frac{d_{F}(0)}{s}  \notag \\ 
& \quad  + d_{F}(1) \left(\frac{1}{s^{2}} +\frac{log{\sqrt{N}}}{s}\right)- \frac{d_{F}(2)}{N^{1/4} (s+\frac{1}{2})}  \notag \\
&=  \int_{1}^{\infty} F^*\left(\frac{it}{\sqrt{N}}\right) t^{s} \frac{dt}{t} + i^\frac{1}{2} \int_{1}^{\infty}  G^*\left(\frac{it}{\sqrt{N}}\right) t^{-s} \frac{dt}{t^{\frac{1}{2}}} + i^{\frac{1}{2}} \frac{d_{G}(0)}{(s-\frac{1}{2})} \notag \\ 
&\quad +  i^{\frac{1}{2}} d_{G}(1) \left(\frac{1}{(s-\frac{1}{2})^{2}} - \frac{\log(\sqrt{N})}{(s-\frac{1}{2})}\right) + i^{\frac{1}{2}} \frac{d_{G}(2)}{N^{\frac{1}{4}}} \frac{1}{(s-1)}  \notag \\ &\quad - \frac{d_{F}(0)}{s}+ d_{F}(1) \left(\frac{1}{s^{2}} + \frac{\log(\sqrt{N})}{s}\right)- \frac{d_{F}(2)}{N^{\frac{1}{4}}} \frac{1}{(s+\frac{1}{2})}.  \notag \\
\end{align}
Because of exponential decay as $t \rightarrow \infty$ of $F^*(it)$ and $G^*(it)$, the above two integrals are entire. Therefore, the poles and residues of $\Lambda_{N}(F,s)$ are obtained from the remaining terms.


 
To prove the functional equation, we replace $F$ with $G$ in the above expression and use $G|_{\frac{1}{2}} W(N)(\tau) = i^{-1}F(\tau)$. Thus we obtain
\begin{align*}
\Lambda_{N}&(G,s) = \int_{1}^{\infty} G^*\left(\frac{it}{\sqrt{N}}\right) t^{s} \frac{dt}{t} + i^{-\frac{1}{2}} \int_{1}^{\infty}  F^*\left(\frac{it}{\sqrt{N}}\right) t^{-s} \frac{dt}{t^{\frac{1}{2}}} - \frac{d_{G}(0)}{s}\notag \\ 
&\quad + d_{G}(1) \left(\frac{1}{s^{2}} + \frac{\log(\sqrt{N})}{s}\right)- \frac{d_{G}(2)}{N^{\frac{1}{4}}} \frac{1}{s+\frac{1}{2}} - i^{-\frac{1}{2}} \frac{d_{F}(0)}{(\frac{1}{2}-s)} \notag \\ 
&\quad +  i^{-\frac{1}{2}} d_{F}(1) \left(\frac{1}{(\frac{1}{2}-s)^{2}} + \frac{\log(\sqrt{N})}{(\frac{1}{2}-s)}\right)- i^{-\frac{1}{2}} \frac{d_{F}(2)}{N^{\frac{1}{4}}} \frac{1}{(1-s)} .
\end{align*} 
Now by replacing $s$ by $\frac{1}{2}-s$, we obtain the functional equation,
 $$ \Lambda_{N}(F,s)= i^{\frac{1}{2}} \Lambda_{N}(G,\frac{1}{2}-s).$$
This completes the proof.


\subsection{Growth rate}
  The next three bounds will be crucial for establishing the summation formula in Theorem \ref{thm:sumformula-sesqui}.
 \begin{lemma}\label{thm:Z-bound}
 For $\alpha > 0$, we have
 \begin{align*}
 Z(\alpha + iT) \ll_{\alpha}  |T|^{\alpha -1} e^{-\frac{{\pi}|T|}{2}} 
 \end{align*}
  as $|T| \to \infty$.
  \end{lemma}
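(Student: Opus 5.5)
The plan is to start from the integral representation of Lemma \ref{thm:Z-formula}, which by Lemma \ref{thm:holo-Z(s)} holds for all $\Re(s)>0$:
\begin{align*}
Z(s)=\frac{\Gamma\left(s+\frac{1}{2}\right)}{\sqrt{8\pi^3}}\, I(s), \qquad I(s):=\int_0^\infty g(t)\,(2t+1)^{-s-\frac{1}{2}}\,dt, \qquad g(t):=t^{-\frac{1}{2}}\log(1+t).
\end{align*}
This splits the estimate into two independent pieces. For the gamma factor, Stirling's formula \cite[(5.11.9)]{NIST:DLMF} gives
$$|\Gamma(\alpha+\tfrac{1}{2}+iT)| \asymp_\alpha |T|^{\alpha}e^{-\frac{\pi|T|}{2}}.$$
It therefore suffices to prove $I(\alpha+iT)\ll_\alpha |T|^{-1}$. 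The trivial bound $|I(s)|\le I(\alpha)<\infty$ only gives $O(1)$, so one extra power of $|T|$ must be gained from the oscillation of $(2t+1)^{-iT}$.

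To gain that power, I integrate by parts once in $t$. The antiderivative is
$$(2t+1)^{-s-\frac{1}{2}} = \frac{d}{dt}\left[-\frac{(2t+1)^{-s+\frac{1}{2}}}{2\left(s-\frac{1}{2}\right)}\right].$$
The boundary term at $t=0$ vanishes because $g(t)\sim t^{\frac{1}{2}}$ there. At infinity, $|g(t)(2t+1)^{-s+\frac{1}{2}}| \ll t^{-\alpha}\log t\to 0$ since $\alpha>0$. Hence
\begin{align*}
I(s)=\frac{1}{2\left(s-\frac{1}{2}\right)}\int_0^\infty g'(t)\,(2t+1)^{-s+\frac{1}{2}}\,dt,
\end{align*}
with
$$g'(t)=-\tfrac{1}{2}t^{-\frac{3}{2}}\log(1+t)+\frac{t^{-\frac{1}{2}}}{1+t}.$$

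The remaining step is to check that this new integral converges absolutely, uniformly in $T$. Near $t=0$ we have $g'(t)\sim \frac{1}{2}t^{-\frac{1}{2}}$, which is integrable. As $t\to\infty$ we have $g'(t)\ll t^{-\frac{3}{2}}\log t$, so the integrand is $\ll t^{-1-\alpha}\log t$, again integrable because $\alpha>0$. So the integral is bounded by a constant $C_\alpha$ depending only on $\alpha$. This yields
$$|I(\alpha+iT)|\le \frac{C_\alpha}{2\left|\alpha-\frac{1}{2}+iT\right|}\ll_\alpha |T|^{-1}.$$
Combining this with the Stirling estimate gives $Z(\alpha+iT)\ll_\alpha |T|^{\alpha-1}e^{-\pi|T|/2}$.

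The only delicate point is the boundary and integrability analysis at $t\to\infty$, which is exactly where the hypothesis $\alpha>0$ is used. For $\alpha$ close to $0$ the margin is only $t^{-\alpha}\log t$, but this is still sufficient for both the boundary term and the integrability of $g'$.
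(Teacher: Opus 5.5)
Your argument is correct, but it takes a different route from the paper.

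The paper works from the incomplete-beta representation of Lemma \ref{thm:Z-formula-2}. It quotes the asymptotic $\mathbf{B}_z(a,b)\sim z^a/a$ as $z\to 0^+$ and uses it to replace $|\mathbf{B}_{1/(2x+1)}(s,\frac12)|$ by $|s|^{-1}(2x+1)^{-\alpha}$ over the entire range $x\ge 0$. That extracts the factor $|T|^{-1}$ before Stirling is applied.

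You instead integrate by parts once, directly in the $t$-integral of Lemma \ref{thm:Z-formula}. The factor $1/(2(s-\frac12))$ then appears exactly, and the remaining integral is bounded by a constant depending only on $\alpha$. This bypasses the Fubini step behind Lemma \ref{thm:Z-formula-2}. It also shows precisely where $\alpha>0$ enters: the boundary term at infinity and the integrability of $g'(t)(2t+1)^{\frac12-\alpha}$.

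The real gain is uniformity in $T$. The asymptotic the paper cites holds for fixed $s$ as $z\to0^+$. The pointwise bound it is used for actually fails near $x=0$, since $\mathbf{B}_1(s,\frac12)=B(s,\frac12)\asymp |T|^{-1/2}$ rather than $\ll |T|^{-1}$. The lemma is still true, and your integration by parts supplies exactly the uniform estimate that the paper's argument leaves unjustified.
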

\begin{proof}
From Lemma \ref{thm:Z-formula-2},  for $\Re(s) >0$, we have 
\begin{align*}
Z(s)= \frac{\Gamma \left(s+\frac{1}{2}\right)}{4  {\pi}^\frac{3}{2}} \int_{0}^{\infty}   \mathbf{B}_{\frac{1}{2x+1}}\left(s,\frac{1}{2}\right)   \frac{dx}{1+x}.
\end{align*}
Now from  8.17.7 and 15.2(i) of \cite{nist}, we have the asymptotic expression 
\begin{align*}
\mathbf{B}_{z}(a,b) \sim \frac{z^{a}}{a}  \hspace{0.5cm}\text{as} \ z \rightarrow 0^{+}.
\end{align*} 
Using this, we have 
\begin{align*}
 \mathbf{B}_{\frac{1}{2x+1}}\left(s,\frac{1}{2}\right) \sim \frac{1}{s}\left({\frac{1}{2x+1}}\right)^{s} \hspace{0.5cm}  x \rightarrow \infty. 
\end{align*}

Now
\begin{align*}
|Z(s)| &=  \bigg|\frac{\Gamma \left(s+\frac{1}{2} \right)}{4  {\pi}^\frac{3}{2}} \int_{0}^{\infty}   \mathbf{B}_{\frac{1}{2x+1}}\left(s,\frac{1}{2}\right)   \frac{dx}{1+x}\bigg|\\
& \leq \bigg|\frac{\Gamma \left(s+\frac{1}{2} \right)}{4  {\pi}^\frac{3}{2}} \bigg| \int_{0}^{\infty}  \bigg| \mathbf{B}_{\frac{1}{2x+1}}\left(s,\frac{1}{2}\right)   \frac{1}{1+x}\bigg| dx \\
& \leq \bigg|\frac{\Gamma \left(s+\frac{1}{2} \right)}{4  {\pi}^\frac{3}{2}} \bigg|  \int_{0}^{\infty}  \bigg|\frac{1}{s}\left({\frac{1}{2x+1}}\right)^{s} \frac{1}{1+x} \bigg| dx. 
\end{align*}
 For $s = \alpha + iT$, as $T \to \infty$, we obtain 
\begin{align*}
|Z(\alpha + iT)|  &\leq \bigg|\frac{\Gamma \left(\alpha+\frac{1}{2} + iT \right)}{4  {\pi}^\frac{3}{2}} \bigg|   \frac{1}{|T|} \int_{0}^{\infty}\left({\frac{1}{2x+1}}\right)^{\alpha} \frac{dx}{1+x} \\
& \leq \bigg|\frac{\Gamma \left(\alpha+\frac{1}{2} + iT \right)}{4  {\pi}^\frac{3}{2}} \bigg|   \frac{1}{|T|} \left(\int_{0}^{\frac{1}{2}} u^{\alpha -1} (1-u)^{-\alpha} du \right) \ll |T|^{\alpha -1} e^{-\frac{{\pi}|T|}{2}}.
\end{align*}
Here the last line follows from Stirling's formula.
\end{proof}

 \begin{lemma}\label{thm:LambdaStirlingBound}
    Let $F \in V^{mg}_{\frac{1}{2}}(4N,\chi)$, and let $\alpha > 1 + \operatorname{max} \{\mu_F^{1},\mu_F^{2},\mu_F^{3} \}$. As $|T| \to \infty$,
    $$
    \Lambda_{N}(F, \alpha+iT)=O_{\alpha,F} (|T|^{\alpha} e^{- \pi |T| /2}).
    $$
    \end{lemma}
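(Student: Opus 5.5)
On the line $\Re(s)=\alpha$ with $\alpha > 1+\max\{\mu^1_F,\mu^2_F,\mu^3_F\}$, the three Dirichlet series $L^{+}(F,s)$, $L^{-}(F,s)$, $L^{*}(F,s)$ converge absolutely by Lemma \ref{thm:polynomial-bound-coefficients}. They are therefore bounded uniformly in $T$ by $\sum_n |c_F(n)| n^{-\alpha}$ and its analogues. The prefactor $\left(\sqrt{N}/(2\pi)\right)^{s}$ has constant modulus on this line. So by \eqref{eq:complete-L-function} it suffices to bound the three special functions separately:
\[
|\Lambda_N(F,\alpha+iT)| \ll_{\alpha,F} |\Gamma(\alpha+iT)| + |W_{\frac12}(\alpha+iT)| + |Z(\alpha+iT)|.
\]

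For the first term, Stirling's formula gives $|\Gamma(\alpha+iT)| \sim \sqrt{2\pi}\,|T|^{\alpha-\frac12}e^{-\pi|T|/2}$, which is within the claimed bound. For the third term, Lemma \ref{thm:Z-bound} gives $|T|^{\alpha-1}e^{-\pi|T|/2}$, which is even better. Both are therefore $O(|T|^{\alpha}e^{-\pi|T|/2})$.

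The main work is the bound for $W_{\frac12}(s)$. I would find a closed form for it. Write $\Gamma(\tfrac12,2x)=\int_{2x}^\infty e^{-y}y^{-1/2}\,dy$ and substitute $y=2x(1+u)$, so that
\[
\Gamma(\tfrac12,2x)=(2x)^{1/2}\int_0^\infty e^{-2x(1+u)}(1+u)^{-1/2}\,du .
\]
Inserting this into \eqref{eq:W-def} and applying Fubini (all integrands are positive for real $s>0$, and the result then extends to complex $s$) gives
\[
W_{\frac12}(s)=\sqrt2\,\Gamma(s+\tfrac12)\int_0^\infty (1+u)^{-1/2}(1+2u)^{-s-\frac12}\,du .
\]
On $\Re(s)=\alpha>0$ the $u$-integral is bounded in modulus by $\int_0^\infty (1+u)^{-1/2}(1+2u)^{-\alpha-\frac12}\,du<\infty$, a constant depending only on $\alpha$. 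This crude bound already suffices. Stirling's formula then gives
\[
|W_{\frac12}(\alpha+iT)|\ll_\alpha |T|^{\alpha}e^{-\pi|T|/2}.
\]
This is exactly the exponent in the statement, which explains why the claimed power is $|T|^{\alpha}$ rather than $|T|^{\alpha-\frac12}$. If a sharper power were wanted, one could integrate by parts in $u$ to gain a factor $1/|s|$, in the same way Lemma \ref{thm:Z-bound} exploits $\mathbf{B}_z(s,\tfrac12)\sim z^s/s$; this is not needed here.

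Summing the three contributions proves Lemma \ref{thm:LambdaStirlingBound}. The only delicate point is justifying the interchange of integrals in the formula for $W_{\frac12}$. I would handle it as in Lemma \ref{thm:Z-formula}: first work with real $s>0$, where the integrands are positive, and then extend to $\Re(s)>0$ by analytic continuation, since both sides are holomorphic there.
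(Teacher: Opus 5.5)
Your proof is correct and has the same structure as the paper's. Both bound $L^{+}$, $L^{-}$, $L^{*}$ uniformly on $\Re(s)=\alpha$, then use Stirling for $\Gamma(s)$, Lemma \ref{thm:Z-bound} for $Z(s)$, and an estimate of size $|T|^{\alpha}e^{-\pi|T|/2}$ for $W_{\frac12}(s)$.

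The one difference is the $W_{\frac12}$ step. The paper cites Corollary 2.6 of \cite{bdgrt} there. Your representation $W_{\frac12}(s)=\sqrt{2}\,\Gamma(s+\frac12)\int_0^\infty (1+u)^{-1/2}(1+2u)^{-s-\frac12}\,du$ makes that step self-contained. The same crude bound applied to the formula in Lemma \ref{thm:Z-formula} would also give $Z(\alpha+iT)\ll|T|^{\alpha}e^{-\pi|T|/2}$, so you would not need Lemma \ref{thm:Z-bound} at all.
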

    \begin{proof} Since $\alpha > 1 +  \operatorname{max} \{\mu_F^{1},\mu_F^{2},\mu_F^{3} \}$,
    $|L^{+} (F, \alpha + iT)| \le \sum_{n = 1}^{\infty} |c_{F}(n)|/n^{\alpha} =O(1).$ Similarly, $|L^{-} (F, \alpha + iT)| = O(1)$ and $|L^{*} (F, \alpha + iT)|= O(1).$ With Stirling's formula, Lemma \ref{thm:Z-bound} and Corollary 2.6 in \cite{bdgrt}, we have as $|T| \to \infty$: 
\begin{align*}
& \Lambda_{N}(F, \alpha+iT) = \\
 & \left( \frac{\sqrt{N}}{2 \pi } \right)^{\alpha + iT}  \left( \Gamma(\alpha + iT) L^+(F, \alpha + iT) +\frac{1}{\sqrt{\pi}} W_{\frac{1}{2}} (\alpha + iT) L^-(F,\alpha+iT)  +Z(\alpha + iT) L^*(F,\alpha+iT) \right)\\
    & \ll_{F, \alpha} |T|^{\alpha-1/2}e^{-\pi|T|/2} + |T|^{\alpha} e^{-\pi |T| /2}  +  |T|^{\alpha -1} e^{-\pi |T| /2} .
    \end{align*}  
    \end{proof}
\begin{proposition}\label{thm:gamma_bound_for_large_t}
    Suppose $F \in V^{mg}_{\frac{1}{2}}(N,\chi)$ and let $G=F|W_{\frac{1}{2}} (N)$. \\
    Let $\alpha >  \operatorname{max} \{1+ \{\mu_F^{1},\mu_F^{2},\mu_F^{3} \},1+ \{\mu_G^{1},\mu_G^{2},\mu_G^{3} \}, \frac{1}{4}\}$, $\rho>0$. For $\sigma \in [\frac{1}{2}-\alpha, \alpha]$, 
    we have
    $$
    \left| \frac{\Lambda_{N}(F, \sigma+iT) }{\Gamma(\rho + 1 + \sigma + iT)} \right| = O(|T|^{-\sigma-\frac{1}{2}+\alpha-\rho})
    $$
    as $|T|\to \infty$, uniformly for $\sigma$ in $[\frac{1}{2}-\alpha,\alpha]$.
    \end{proposition}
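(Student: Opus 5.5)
We plan to use a Phragmén–Lindelöf convexity argument on the strip $\frac{1}{2}-\alpha \le \sigma \le \alpha$, following the analogous result of \cite{bdgrt}. The first step is to control $\Lambda_N(F,s)$ on the two boundary lines. On the right line $\sigma=\alpha$, Lemma \ref{thm:LambdaStirlingBound} gives $\Lambda_N(F,\alpha+iT)=O(|T|^{\alpha}e^{-\pi|T|/2})$. On the left line $\sigma=\frac12-\alpha$ we use the functional equation \eqref{function-equation2}: $\Lambda_N(F,\frac12-\alpha+iT)=i^{\frac12}\Lambda_N(G,\alpha-iT)$. Since $\alpha$ also exceeds $1+\max\{\mu_G^{j}\}$, Lemma \ref{thm:LambdaStirlingBound} applied to $G$ gives the bound $O(|T|^{\alpha}e^{-\pi|T|/2})$ on that line as well.

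Next we must justify Phragmén–Lindelöf in the interior. By Theorem \ref{thm:functional-equation}, $\Lambda_N(F,s)$ has poles only at $s=-\frac12,0,\frac12,1$, and these lie inside the strip because $\alpha>1$. We therefore multiply by $P(s)=(s+\frac12)s^2(s-\frac12)^2(s-1)$, or alternatively subtract the explicit principal parts from \eqref{eq:Lambda-formula}. The principal parts decay polynomially in $|T|$, so they are harmless.

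We also need a crude a priori bound of finite order in the strip. This comes from \eqref{eq:Lambda-formula}. The two integrals $\int_1^\infty F^*(it/\sqrt N)t^{s-1}dt$ and $\int_1^\infty G^*(it/\sqrt N)t^{-s-\frac12}dt$ are bounded uniformly for $\sigma$ in a compact interval, because $F^*$ and $G^*$ decay exponentially. Hence the entire part of $\Lambda_N(F,s)$ is bounded in the strip away from the poles. This is even better than finite order: $\Lambda_N$ is bounded on the whole strip minus neighborhoods of the poles.

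The main obstacle is that this a priori bound lacks the factor $e^{-\pi|T|/2}$. Applying Phragmén–Lindelöf directly to $\Lambda_N(F,s)$ would only give boundedness. The natural move is to apply it to the ratio
$$H(s)=\frac{\Lambda_N(F,s)\,P(s)}{\Gamma(\rho+1+s)}.$$
By Stirling's formula, $|\Gamma(\rho+1+\sigma+iT)|\asymp |T|^{\rho+\sigma+\frac12}e^{-\pi|T|/2}$ uniformly for $\sigma$ in compact sets. On the boundary lines this gives $H(\alpha+iT)=O(|T|^{\alpha-\rho-\alpha-\frac12+6})$ and $H(\frac12-\alpha+iT)=O(|T|^{\alpha-\rho-(\frac12-\alpha)-\frac12+6})$. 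In the interior, $|1/\Gamma|$ grows at most like $e^{\pi|T|/2}|T|^{C}$, so $H$ has finite order there, which is exactly what Phragmén–Lindelöf requires.

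Interpolating the exponents linearly in $\sigma$ gives $H(\sigma+iT)=O(|T|^{-\sigma-\frac12+\alpha-\rho+6})$ uniformly on the strip. The exponent $\alpha-\rho-\sigma-\frac12$ is the same linear function at both ends, so the interpolation is exact. Dividing back by $P(s)\asymp|T|^6$ for large $|T|$ yields the stated bound
$$\frac{\Lambda_N(F,\sigma+iT)}{\Gamma(\rho+1+\sigma+iT)}=O(|T|^{-\sigma-\frac12+\alpha-\rho}).$$
The remaining routine point to check is that Stirling's asymptotic is uniform in $\sigma\in[\frac12-\alpha,\alpha]$ for $|T|\ge 1$. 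This holds because $\rho+1+\sigma$ stays in a compact set and no poles of $\Gamma$ are approached when $|T|$ is large.
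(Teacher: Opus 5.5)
Your proposal is correct and follows the paper's route. You bound the ratio on $\sigma=\alpha$ using Lemma \ref{thm:LambdaStirlingBound} with Stirling's formula, bound it on $\sigma=\frac12-\alpha$ using the functional equation and the same lemma applied to $G$, and then interpolate by Phragm\'en--Lindel\"of. Your extra steps, clearing the poles at $-\frac12,0,\frac12,1$ with $P(s)$ and using \eqref{eq:Lambda-formula} to get finite order in the strip, are details the paper leaves implicit, and you handle them correctly.
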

    \begin{proof} 
    For $\sigma = \alpha,$ we use Lemma \ref{thm:LambdaStirlingBound} and Stirling's formula to obtain as $|T| \to \infty$
    \begin{align}\label{stirlingq}
    \frac{\Lambda_{N}(F, \alpha+iT) }{\Gamma(\rho + 1 + \alpha + iT)} \ll \frac{ e^{- \pi |T| / 2} |T|^{\alpha}}{|T|^{\rho +1 + \alpha- \frac{1}{2}} e^{- \pi |T|/2}} = O(|T|^{-\rho -1/2}). 
    \end{align}
    When $\sigma = \frac{1}{2}-\alpha$, with Theorem \ref{thm:functional-equation} and an application of \eqref{thm:LambdaStirlingBound}, we obtain, as $|T| \to \infty$,
    \begin{align*}
    \frac{\Lambda_{N}(F, \frac{1}{2}-\alpha+iT) }{\Gamma(\rho + 1 + \frac{1}{2}-\alpha + iT)}&=
    i^{\frac{1}{2}}\frac{\Lambda_{N}(G, \alpha-iT) }{\Gamma(\rho + 1 + \alpha - iT)} \cdot
    \frac{\Gamma(\rho + 1 + \alpha - iT)}{\Gamma(\rho + 1 + \frac{1}{2} -\alpha + iT)} \\
    &\ll |T|^{ -\rho -\frac{1}{2} +2 \alpha- \frac{1}{2}}.\end{align*}
    The result then follows from the Phragmen-Lindel\"{o}f Principle.
    \end{proof}

\section{Summation formula in the sesquiharmonic case}\label{sec:summation-formula}
Having established the necessary preliminaries in the previous sections, we are now ready to state and prove our summation formula for the half integral weight sesquiharmonic maass forms. 
As in \cite{bdgrt}, we set
\begin{align}\label{eq:g-rho-def}
g_{\rho}(n,r) := \frac{2 \pi i}{\Gamma(\rho+\frac{1}{2})} \left(1+ \frac{n}{r} \right)^{\rho} \left( \frac{\sqrt{\pi} \Gamma(\rho+\frac{1}{2})}{\Gamma(\rho+1)} - \mathbf{B}_{{\frac{2n}{r+n}}}\left(\frac{1}{2}, \rho+\frac{1}{2}\right)\right).
\end{align}

\begin{theorem} \label{thm:sumformula-sesqui} 
Let $F \in V_{\frac{1}{2}}^{mg}(N,\chi)$ and let $G := F | W(N)$. Let the Fourier coefficients $F$ and $G$ be denoted as in \eqref{eq:fourier}.
Assume that $\mu_F^{1}, \mu_F^{2},\mu_F^{3}, \mu_G^{1},\mu_G^{2},\mu_G^{3} \in \mathbb{R}_{>0}$ such that $a_{F}(n) = \mathcal{O}(n^ {\mu_F^{1}}), b_{F}(n) = \mathcal{O}(n^ {\mu_F^{2}}), c_{F}(n) = \mathcal{O}(n^ {\mu_F^{3}})$ and similarly $a_{G}(n) = \mathcal{O}(n^ {\mu_G^{1}}), b_{G}(n) = \mathcal{O}(n^ {\mu_G^{2}}), c_{G}(n) = \mathcal{O}(n^ {\mu_G^{3}})$. Also let $ \rho_{0} = 2 + 2 \max \{\mu_F^{1},\mu_F^{2},\mu_F^{3},\mu_G^{1},\mu_G^{2},\mu_G^{3}\}$. \\ Then for $\rho > \rho_{0} -\frac{1}{2}$, we have 
\begin{align*}
\frac{1}{\Gamma(\rho +1)}\sum_{n\leq x}{c_{F}(n)(x-n)^{\rho}} + \frac{x^{\rho}}{2{\pi}^{\frac{3}{2}}i} \sum_{n\leq x} {b_{F}(n)g_{\rho}(n,x)}+\frac{x^{\rho}}{2{\pi}i} \sum_{n\leq x} {a_{F}(n)h_{\rho}(n,x)} - x^{\rho} Q_{\rho}(x) \\= -i^{\frac{1}{2}} x^{\frac{\rho}{2} +\frac{1}{4}} \left( \frac{\sqrt{N}}{2{\pi}}\right)^{\rho} \sum_{n=1}^{\infty} {\frac{c_{G}(n)}{n^{\frac{\rho}{2} +\frac{1}{4}}}} J_{{\rho} +\frac{1}{2}} \left(4 {\pi} \sqrt{\frac{nx}{N}}\right) \\
 -i^{\frac{1}{2}} \frac{x^{({\rho}+1)/2}}{\sqrt{\pi}} \left(\frac{\sqrt{N}}{2{\pi}}\right)^{{\rho}-\frac{1}{2}} \sum_{n=1}^{\infty} {\frac{b_{G}(n)}{n^{\frac{\rho}{2}}}} \int_{0}^{1/2} {\frac{u^{\frac{\rho}{2}-1}}{(1-u)^{(1+{\rho})/2}} J_{{\rho} +1} \left( 4 {\pi} \sqrt{\frac{nx(1-u)}{Nu}}\right) du}
\end{align*}
\begin{align}\label{eq:sumformula-sesqui}
-i^{\frac{1}{2}} \frac{x^{({\rho}+1)/2}}{4 \pi^{\frac{3}{2}}}\left( \frac{\sqrt{N}}{2{\pi}}\right)^{{\rho}-\frac{1}{2}} \sum_{n=1}^{\infty} {\frac{a_{G}(n)}{n^{\frac{\rho}{2}}} \int_{0}^{1} \log \left( \frac{t+1}{2t}\right) t^{\frac{\rho}{2} -1} (1-t) ^{-\frac{1}{2}} J_{{\rho} +1} \left( 4 {\pi} \sqrt{\frac{nx}{Nt}}\right) dt }
\end{align}
where
\begin{align}
Q_{\rho}(x)=  \frac{(-d_{F}(0)  +d_{F}(1) \log (\sqrt{N}) )}{\Gamma(\rho+1)} + \frac{i^{\frac{1}{2}} N^{-\frac{1}{4}} d_{G}(2) 2 \pi x }{N^{\frac{1}{2}} \Gamma(\rho+2) } +  \nonumber \\  \frac{(2 \pi x)^{-\frac{1}{2}} ( - d_{F}(2)) N^{-\frac{1}{4}}} {N^{-\frac{1}{4}} \Gamma(\rho+\frac{1}{2})} + \frac{i^{\frac{1}{2}} ( d_{G}(0) -d_{G}(1) \log (\sqrt{N}))  (2 \pi x)^{\frac{1}{2}}}{N^{\frac{1}{4}} \Gamma(\rho+\frac{3}{2})}.
\end{align}
with $g_{\rho}(n,r)$ as in \eqref{eq:g-rho-def} and $h_{\rho}(n,r)$ as given in Proposition $\ref{int-sum of Z(s)}$.
\end{theorem}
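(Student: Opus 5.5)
We follow the Chandrasekharan--Narasimhan approach as adapted in \cite{bdgrt}: apply a Perron-type formula to $\Lambda_N(F,s)$, shift the contour across all the poles, use the functional equation of Theorem~\ref{thm:functional-equation} to convert the shifted integral into one attached to $G$, and evaluate everything term by term. Concretely, fix $\alpha$ as in Proposition~\ref{thm:gamma_bound_for_large_t}, set $r = 2\pi x/\sqrt{N}$, and consider
\[
I(x) := \frac{1}{2\pi i}\int_{(\alpha)} \frac{\Lambda_N(F,s)}{\Gamma(\rho+1+s)}\, r^{s}\, ds .
\]

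\textbf{Step 1: the left-hand side.} Inserting the definition \eqref{eq:complete-L-function} splits $I(x)$ into three integrals. Absolute convergence on $\Re(s)=\alpha$ (Lemma~\ref{thm:LambdaStirlingBound} and Stirling's formula) lets us interchange sum and integral. The pieces are evaluated as follows.
\begin{itemize}
\item The $\Gamma(s)L^+$ piece gives the classical Riesz sum $\frac{1}{\Gamma(\rho+1)}\sum_{n\le x} c_F(n)(x-n)^\rho$, via $\frac{1}{2\pi i}\int \frac{\Gamma(s)}{\Gamma(s+\rho+1)}y^{s}\,ds=\frac{(1-1/y)^\rho}{\Gamma(\rho+1)}$, after normalizing by $x^{-\rho}$ as appropriate.
\item The $W_{\frac12}L^-$ piece gives the $g_\rho$-term. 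This is exactly the computation of \cite{bdgrt}, using the $W_{\frac12}$ representation of Lemma~\ref{thm:W-formula}.
\item The $Z(s)L^*$ piece is precisely Proposition~\ref{thm:finite-sum-formula}, which produces $\sum_{n\le x} a_F(n)h_\rho(n,x)$.
\end{itemize}
The constants $x^\rho/(2\pi^{3/2}i)$ and $x^{\rho}/(2\pi i)$ come from this bookkeeping.

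\textbf{Step 2: contour shift.} Move the line of integration from $\Re(s)=\alpha$ to $\Re(s)=\frac12-\alpha$. Proposition~\ref{thm:gamma_bound_for_large_t} makes the horizontal segments vanish, since the integrand is $O(|T|^{-\sigma-\frac12+\alpha-\rho})\to0$ for $\rho$ large. The residues at $s=1,\frac12,0,-\frac12$ (Theorem~\ref{thm:functional-equation}) multiplied by $r^s/\Gamma(\rho+1+s)$ give exactly the four terms of $Q_\rho(x)$. For the double poles at $0$ and $\frac12$, the $\log r$ and digamma contributions have to be tracked. The stated $Q_\rho$ keeps only the simple-residue parts, so we will check whether the $d(1)$ double-pole terms are meant to be absorbed, or whether the theorem implicitly assumes those terms are as written. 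On the new line we apply \eqref{function-equation2}, writing $\Lambda_N(F,s)=i^{1/2}\Lambda_N(G,\frac12-s)$, and substitute $w=\frac12-s$, so that $\Re(w)=\alpha$. There $L^{\pm,*}(G,w)$ converge absolutely, and for $\rho>\rho_0-\frac12$ the resulting integral converges absolutely, which again allows term-by-term integration.

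\textbf{Step 3: evaluating the dual integrals.} Each of the three dual integrals is now evaluated in turn.
\begin{itemize}
\item The $\Gamma(w)c_G(n)$ term: $\frac{1}{2\pi i}\int \frac{\Gamma(w)}{\Gamma(\rho+\frac32-w)}X^{-w}\,dw$ is a standard Mellin--Barnes integral for $X^{\cdot}J_{\rho+\frac12}(2\sqrt X)$. This gives the $J_{\rho+\frac12}(4\pi\sqrt{nx/N})$ series.
\item The $W_{\frac12}(w)b_G(n)$ term is handled as in \cite{bdgrt}: write $W_{\frac12}(w)$ as a beta-type integral over $u\in(0,\frac12)$ times $\Gamma(w+\frac12)$, swap the integrals, and recognize $J_{\rho+1}$.
\item For the $Z(w)a_G(n)$ term, use Lemma~\ref{thm:Z-formula}, i.e.\ $Z(w)=\frac{\Gamma(w+\frac12)}{\sqrt{8\pi^3}}\int_0^\infty t^{-1/2}\log(1+t)(2t+1)^{-w-\frac12}\,dt$. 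Substitute $t\mapsto$ a variable in $(0,1)$ so that $(2t+1)^{-1}$ becomes the new variable; the logarithm turns into $\log\frac{t+1}{2t}$ and the weight into $t^{\frac\rho2-1}(1-t)^{-1/2}$. Then interchange with the $w$-integral, and the inner Mellin--Barnes integral $\frac{1}{2\pi i}\int\frac{\Gamma(w+\frac12)}{\Gamma(\rho+\frac32-w)}Y^{-w}\,dw$ again yields $J_{\rho+1}$.
\end{itemize}

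\textbf{Main obstacle.} The hardest step is this last interchange of integrals for the $Z$ term. On $\Re(w)=\alpha$, the factor $|\Gamma(w+\frac12)/\Gamma(\rho+\frac32-w)|\sim|T|^{2\alpha-1-\rho}$ is only polynomially decaying, and the $t$-integral has a logarithmic singularity at $t\to0$. Fubini therefore requires $\rho$ large, which is where the hypothesis $\rho>\rho_0-\frac12$ is used; if needed, we would first move to a slightly different vertical line. Matching all the constants exactly, in particular the powers of $\sqrt N/2\pi$ and $x$ and the factors of $i^{1/2}$, is routine but error-prone.
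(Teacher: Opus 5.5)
Your architecture (Perron's formula, shift to $\Re s=\frac12-\alpha$, functional equation, termwise Mellin--Barnes evaluation) is the paper's. Your treatment of the dual $Z$-term is also fine: substituting $\tau=(2t+1)^{-1}$ in Lemma~\ref{thm:Z-formula} gives $Z(w)=\frac{\Gamma(w+\frac12)}{4\pi^{3/2}}\int_0^1\log\frac{1+\tau}{2\tau}\,\tau^{w-1}(1-\tau)^{-1/2}\,d\tau$, which reaches the same $J_{\rho+1}$-integral more directly than the paper's detour through the incomplete beta function.

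\textbf{The range of $\rho$.} The gap is here. You assert absolute convergence on the shifted line for $\rho>\rho_0-\frac12$, but Proposition~\ref{thm:gamma_bound_for_large_t} only gives $O(|T|^{2\alpha-1-\rho})$ there, which is integrable only for $\alpha<\rho/2$. Combined with $\alpha>1+\max\mu=\rho_0/2$, which the dual Dirichlet series need, this forces $\rho>\rho_0$; your own estimate $|T|^{2\alpha-1-\rho}$ is the same obstruction. The paper argues directly only for $\rho>\rho_0$. It then reaches $\rho_0-\frac12<\rho\le\rho_0$ by applying the identity with $\rho+1$ and differentiating in $\sqrt x$, justifying termwise differentiation.

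Alternatively, keep your direct route but bound the three pieces of $\Lambda_N(G,w)$ separately. First, $\Gamma(w)/\Gamma(\rho+\frac32-w)\ll|T|^{2\alpha-\rho-\frac32}$. Second, write $W_{\frac12}(w)=\sqrt2\,\Gamma(w+\frac12)\int_1^\infty u^{-1/2}(2u-1)^{-w-\frac12}\,du$; one integration by parts gives $W_{\frac12}(\alpha+iT)\ll|T|^{\alpha-1}e^{-\pi|T|/2}$, better than the $|T|^{\alpha}$ recorded in Lemma~\ref{thm:LambdaStirlingBound}. Then any $\alpha\in(\rho_0/2,\rho/2+\frac14)$ works, and this interval is nonempty exactly when $\rho>\rho_0-\frac12$.

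This also relocates your ``main obstacle''. The $Z$-term Fubini costs nothing, because each single-$n$ integral can be moved to a line $0<\alpha'<\rho/2$. The hypothesis on $\rho$ enters only through the two competing constraints on $\alpha$.

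\textbf{The double poles.} Your suspicion is correct: these terms are not absorbed anywhere. Near $s=0$ we have $\Lambda_N(F,s)=\frac{d_F(1)}{s^2}+\frac{-d_F(0)+d_F(1)\log\sqrt N}{s}+O(1)$, and at $s=\frac12$ the double-pole coefficient is $i^{1/2}d_G(1)$. Hence, with $r=2\pi x/\sqrt N$ and $\psi=\Gamma'/\Gamma$, the residues of $\Lambda_N(F,s)r^s/\Gamma(\rho+1+s)$ contain the additional terms
\begin{equation*}
\frac{d_F(1)}{\Gamma(\rho+1)}\bigl(\log r-\psi(\rho+1)\bigr)\qquad\text{and}\qquad \frac{i^{1/2}d_G(1)\,r^{1/2}}{\Gamma(\rho+\frac32)}\bigl(\log r-\psi(\rho+\tfrac32)\bigr).
\end{equation*}
The paper's proof keeps only the simple-pole Laurent coefficients times $r^j/\Gamma(\rho+1+j)$. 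So the printed $Q_\rho$ is right only when $d_F(1)=d_G(1)=0$, which fails for $\widehat Z_+$, where $d_F(1)=-1/(4\pi)$.

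What your argument proves is the identity with these two terms added to $Q_\rho$. After multiplication by $x^\rho$ they are $O(x^{\rho+\frac12}\log x)$. The main term therefore survives, and the extra terms are absorbed by the $x^{\epsilon}$ in Theorem~\ref{thm:asymptotic-formula-example}.
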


\begin{proof}

 We will first prove \eqref{eq:sumformula-sesqui} for $\rho>\rho_0$. 
    Let $\alpha \in \mathbb{R}$ satisfy
    \begin{equation}\label{range} 1+ \operatorname{max} \{\mu_F^{1},\mu_F^{2},\mu_F^{3}, \mu_G^{1},\mu_G^{2},\mu_G^{3} \}< \alpha < \frac{\rho}{2}.
    \end{equation}
    Note that our lower bound on $\rho$ makes such a choice possible. Then Perron's formula gives 
    \begin{align}\label{1}
    \frac{1}{\Gamma(\rho+1)}\sum_{n\leq x} &c_{F}(n)(x-n)^{\rho}=\frac{1}{2\pi i}\int_{(\alpha)}\frac{\Gamma(s)L^{+}(F,s)}{\Gamma(\rho+1+s)}x^{s+\rho}\ ds\nonumber\\
    &=\frac{x^\rho}{2\pi i}\int_{(\alpha)}\frac{\Lambda_{N}(F,s)}{\Gamma(\rho+1+s)}\left(\frac{2\pi x}{\sqrt{N}} \right)^{s} ds  -\frac{x^\rho}{2\pi^{\frac{3}{2}} i}\int_{(\alpha)}\frac{W_{\frac{1}{2}}(s)L^{-}(F,s)}{\Gamma(\rho+1+s)} x^s  ds \nonumber \\
    & -\frac{x^\rho}{2\pi i}\int_{(\alpha)}\frac{Z(s)L^{*}(F,s)}{\Gamma(\rho+1+s)} x^s  ds.
    \end{align}

We will first evaluate the first integral on the right-hand side in \eqref{1} by shifting the line of integration to $\frac{1}{2}-\alpha<0$. By Theorem \ref{thm:functional-equation}, $\Lambda_{N}(F,s)$ has poles at $s=-\frac{1}{2},0,\frac{1}{2}$ and $1$, by Cauchy's Residue Theorem,  we find
    \begin{align}
    \frac{x^{\rho}}{2\pi i}\int_{(\alpha)}\frac{\Lambda_{N}(F,s)}{\Gamma(\rho+1+s)}\left(\frac{2\pi x}{\sqrt{N}} \right)^{s}\ ds= x^{\rho} \sum_{j \in \{-\frac{1}{2},0, \frac{1}{2},1 \}} R_j +\frac{x^{\rho}}{2\pi i}\int_{(\frac{1}{2}-\alpha)}\frac{\Lambda_{N}(F,s)}{\Gamma(\rho+1+s)}\left(\frac{2\pi x}{\sqrt{N}} \right)^{s}\ ds,
    \end{align}
    where $R_j$ is the residue of $\frac{\Lambda_{N}(F,s)}{\Gamma ( \rho + 1 + s)} \left( \frac{2 \pi x}{\sqrt{N}} \right)^s$ at $s=j$. Note that Proposition \ref{thm:gamma_bound_for_large_t} implies that the integrals along two horizontal segments of the usual rectangular contour tend to zero as $|T|$ tends to infinity.
    
Next, we set $Q_{\rho}(x) = R_{0}+R_{1}+R_{-\frac{1}{2}}+R_{\frac{1}{2}}$. Then using Theorem \ref{thm:functional-equation}, we have 
\begin{align*}
Q_{\rho}(x)= \frac{(-d_{F}(0)+d_{F}(1) \log (\sqrt{N}) ) }{\Gamma(\rho+1)} + \frac{i^{\frac{1}{2}} N^{-\frac{1}{4}} (d_{G}(2)2\pi x }{N^{\frac{1}{2}} \Gamma(\rho+2) } + \\ \frac{ (2 \pi x)^{-\frac{1}{2}}   ( - d_{F}(2)) N^{-\frac{1}{4}}}{ N^{-\frac{1}{4}} \Gamma(\rho+\frac{1}{2})} + \frac{i^{\frac{1}{2}} (d_{G}(0) -d_{G}(1) \log (\sqrt{N}))  (2 \pi x)^{\frac{1}{2}}  }{N^{\frac{1}{4}} \Gamma(\rho+\frac{3}{2})}
\end{align*}
Now using the functional equation \eqref{function-equation2}, we obtain
\begin{align*}
 \frac{x^{\rho}}{2\pi i}\int_{(\alpha)}\frac{\Lambda_{N}(F,s)}{\Gamma(\rho+1+s)}\left(\frac{2\pi x}{\sqrt{N}} \right)^{s}\ ds = x^{\rho} Q_{\rho}(x) + i^{\frac{1}{2}} \frac{x^{\rho}}{2\pi i}\int_{(\frac{1}{2}-\alpha)}\frac{\Lambda_{N}(G,\frac{1}{2} - s)}{\Gamma(\rho+1+s)}\left(\frac{2\pi x}{\sqrt{N}} \right)^{s}\ ds \\
=  x^{\rho} Q_{\rho}(x)  -  i^{\frac{1}{2}} \frac{x^{\rho}}{2\pi i} \left(\frac{2\pi x}{\sqrt{N}} \right)^{\frac{1}{2}} \int_{(\alpha)}\frac{\Lambda_{N}(G, s)}{\Gamma(\rho+\frac{3}{2}-s)}\left(\frac{2\pi x}{\sqrt{N}} \right)^{-s}\ ds \\
= x^{\rho} Q_{\rho}(x) -   i^{\frac{1}{2}} x^{\rho}  \left(\frac{2\pi x}{\sqrt{N}} \right)^{\frac{1}{2}} \sum_{n=1}^{\infty} c_{G}(n) \frac{1}{ 2 \pi i}  \int_{(\alpha)}\frac{\Gamma(s)}{\Gamma(\rho+\frac{3}{2}-s)}\left(\frac{4 {\pi}^{2} n x}{N} \right)^{-s}\ ds \\
-   i^{\frac{1}{2}} x^{\rho}  \left(\frac{2\pi x}{\sqrt{N}} \right)^{\frac{1}{2}} \sum_{n=1}^{\infty} b_{G}(n) \frac{1}{ 2 {\pi}^{\frac{3}{2}} i}  \int_{(\alpha)}\frac{W_{\frac{1}{2}}(s)}{\Gamma(\rho+\frac{3}{2}-s)}\left(\frac{4 {\pi}^{2} n x}{N} \right)^{-s}\ ds \\
-   i^{\frac{1}{2}} x^{\rho}  \left(\frac{2\pi x}{\sqrt{N}} \right)^{\frac{1}{2}} \sum_{n=1}^{\infty} a_{G}(n) \frac{1}{ 2 \pi i}  \int_{(\alpha)}\frac{Z(s)}{\Gamma(\rho+\frac{3}{2}-s)}\left(\frac{4 {\pi}^{2} n x}{N} \right)^{-s}\ ds
\end{align*}
To justify the interchange of summation and integration in the first two integrals, one can use the same argument as in \cite{bdgrt}. For the third integral, we decompose the integral and apply Stirling's asymptotic as follows:
\begin{align*} 
 \int_{(\alpha)}\Big|\frac{Z(s)}{\Gamma(\rho+\frac{3}{2}-s)}\left(\frac{4 {\pi}^{2} n x}{N} \right)^{-s}  \Big| ds  \\
&= \left(\frac{4 {\pi}^{2} n x}{N} \right)^{-\alpha}  \left( \int_{-1}^1 + \int_{-\infty}^{-1} + \int_1^{\infty} \right) \Big|\frac{Z(\alpha + it)}{\Gamma(\rho+\frac{3}{2}-\alpha - it)} \Big| dt  \\
\end{align*}
Now from Lemma \ref{thm:holo-Z(s)}, we know that $Z(s)$ is holomorphic on $\Re(s)>0$, and the reciprocal of the gamma function is also continuous (in fact is entire), so the integrand is continous on $\Re(s) >0$. In particular, it is continuous on $[-1,1]$ which is compact. Thus the integrand is bounded and so the integral from $-1$ to $1$ is convergent. 

The other two integrals are absolutely convergent because of Stirling's theorem and Proposition \ref{thm:gamma_bound_for_large_t}:
\begin{align*}
  \left(  \int_{-\infty}^{-1} + \int_1^{\infty} \right) \Big|\frac{Z(\alpha + it)}{\Gamma(\rho+\frac{3}{2}-\alpha - it)} \Big| dt  \ll 2   \int_1^{\infty}  \frac{t^{\alpha - 1} e^{-\pi t /2}}{t^{\rho + \frac{3}{2} - \alpha -\frac{1}{2}} e^{-\pi t /2}}dt 
\end{align*}
which is convergent for $\alpha - 1 - \rho - \frac{3}{2} +\frac{1}{2} + \alpha < -1$, i.e. $\alpha < \frac{\rho +1}{2}$. The interchange is thus justified by the absolute convergence of $\sum a_G(n) n^{-\alpha}$ and Fubini's Theorem.

With this we obtain our summation formula as follows,
 \begin{align}{\label{summation-formula}}
 &\frac{1}{\Gamma(\rho+1)}\sum_{n\leq x} c_{F}(n)(x-n)^{\rho} = x^{\rho} Q_{\rho}(x) \\
&-  i^{\frac{1}{2}} x^{\rho}  \left(\frac{2\pi x}{\sqrt{N}} \right)^{\frac{1}{2}} \sum_{n=1}^{\infty} c_{G}(n) \frac{1}{ 2 \pi i}  \int_{(\alpha)}\frac{\Gamma(s)}{\Gamma(\rho+\frac{3}{2}-s)}\left(\frac{4 {\pi}^{2} n x}{N} \right)^{-s}\ ds \nonumber \\
&-   i^{\frac{1}{2}} x^{\rho}  \left(\frac{2\pi x}{\sqrt{N}} \right)^{\frac{1}{2}} \sum_{n=1}^{\infty} b_{G}(n) \frac{1}{ 2 {\pi}^{\frac{3}{2}} i}  \int_{(\alpha)}\frac{W_{\frac{1}{2}}(s)}{\Gamma(\rho+\frac{3}{2}-s)}\left(\frac{4 {\pi}^{2} n x}{N} \right)^{-s}\ ds \nonumber  \\
&-   i^{\frac{1}{2}} x^{\rho}  \left(\frac{2\pi x}{\sqrt{N}} \right)^{\frac{1}{2}} \sum_{n=1}^{\infty} a_{G}(n) \frac{1}{ 2 \pi i}  \int_{(\alpha)}\frac{Z(s)}{\Gamma(\rho+\frac{3}{2}-s)}\left(\frac{4 {\pi}^{2} n x}{N} \right)^{-s}\ ds -\frac{x^\rho}{2\pi^{\frac{3}{2}} i}\int_{(\alpha)}\frac{W_{\frac{1}{2}}(s)L^{-}(F,s)}{\Gamma(\rho+1+s)} x^s  ds \nonumber \\
  &-\frac{x^\rho}{2\pi i}\int_{(\alpha)}\frac{Z(s)L^{*}(F,s)}{\Gamma(\rho+1+s)} x^s  ds. \nonumber
\end{align}

Now we will give a closed form of each integral in the right hand side of \eqref{summation-formula}. 
For the first two integrals, we have the closed forms from  \cite{bdgrt},
\begin{align*}
\frac{1}{2 \pi i} \sum_{n=1}^{\infty} c_{G}(n)  \int_{(\alpha)} \frac{\Gamma(s)}{\Gamma(\rho+\frac{3}{2}-s)}\left(\frac{4 {\pi}^{2} n x}{N} \right)^{-s} ds 
= \left(\frac{\sqrt{N}}{2 \pi \sqrt{x}}\right)^{\rho +\frac{1}{2}}  \sum_{n=1}^{\infty} \frac{c_{G}(n)}{n^{\frac{\rho}{2}+\frac{1}{4}}} J_{\rho +\frac{1}{2}} \left(4 \pi \sqrt{\frac{nx}{N}}\right)
\end{align*}

In the last step, we employ the following formula from Section 7.3 (23) of \cite{bateman1954tables} with the assumption that $0 < \alpha < \frac{\rho}{2}+\frac{1}{4}$,
\begin{align*}
\frac{1}{2 \pi i}\int_{(\alpha)} \frac{\Gamma(s)}{\Gamma(\rho+\frac{3}{2}-s)} z^{-s} ds= z^{-\left(\frac{\rho}{2} +\frac{1}{4}\right)} J_{\rho+\frac{1}{2}} (2 \sqrt{z})
\end{align*}
For $\alpha < \frac{\rho}{2}$,
 \begin{align*} 
&\sum_{n=1}^{\infty} b_{G}(n) \frac{1}{ 2 {\pi}^{\frac{3}{2}} i}  \int_{(\alpha)}\frac{W_{\frac{1}{2}}(s)}{\Gamma(\rho+\frac{3}{2}-s)}\left(\frac{4 {\pi}^{2} n x}{N} \right)^{-s}\ ds \\
&= \frac{1}{\sqrt{\pi}}\sum_{n=1}^{\infty} b_{G}(n) \int_{0}^{\frac{1}{2}} u^{-1} (1-u)^{-\frac{1}{2}} \left( \frac{4 \pi^{2} xn (1-u)}{Nu} \right)^{\frac{1}{2} - \frac{\rho+1}{2}} J_{{\rho} +1} \left( 4 {\pi} \sqrt{\frac{nx(1-u)}{Nu}}\right) du 
\end{align*}
 
Now for the third integral, by using Lemma \ref{thm:Z-formula-2} and Fubini's theorem, we compute
\begin{align*}   
\frac{1}{ 2 \pi i} & \int_{(\alpha)}\frac{Z(s)}{\Gamma(\rho+\frac{3}{2}-s)}\left(\frac{4 {\pi}^{2} n x}{N} \right)^{-s}\ ds \\
&= \frac{1}{ 4 \pi^{\frac{3}{2}}}  \int_{0}^{\infty} \int_{0}^{\frac{1}{1+2u}} t^{-1}(1-t)^{-\frac{1}{2}} \left(\frac{4 {\pi}^{2} n x}{Nt} \right)^{\frac{1}{2}-\frac{(\rho+1)}{2}} J_{\rho+1} \left( 2 \sqrt{\frac{4 \pi^{2} nx}{Nt}} \right)  \frac{ dt du}{1+u}
\end{align*} 
which holds for $\alpha \leq \frac{\rho}{2}$. Now by applying Fubini's theorem again, we finally obtain 
\begin{align*}
&\frac{1}{ 2 \pi i}\int_{(\alpha)}\frac{Z(s)}{\Gamma(\rho+\frac{3}{2}-s)}\left(\frac{4 {\pi}^{2} n x}{N} \right)^{-s}ds \\
&=  \frac{1}{ 4 \pi^{\frac{3}{2}}} \int_{0}^{1} \log \left(\frac{t+1}{2t} \right) t^{-1}(1-t)^{-\frac{1}{2}} \left(\frac{4 {\pi}^{2} n x}{Nt} \right)^{\frac{1}{2}-\frac{(\rho+1)}{2}} J_{\rho+1} \left( 2 \sqrt{\frac{4 \pi^{2} nx}{Nt}} \right) dt
\end{align*}
The last integral is given by Proposition \ref{thm:finite-sum-formula}. 

Extending the formula from $\rho > \rho_0$ to $\rho > \rho_0 - \frac{1}{2}$ follows exactly as in \cite{bdgrt}. We give a brief sketch of the argument. First, assume that $\rho > \rho_0 - \frac{1}{2}$. By what we have already shown, $\rho + 1 > \rho_0$ implies that the equation in Theorem \ref{thm:sumformula-sesqui} holds with $\rho+1$ replacing $\rho$. Differentiating both sides with respect to $y = \sqrt{x}$ produces the desired relation for $\rho$, after justifying the term-by-term differentiation of the series involved using their absolute convergence on the range $\rho > \rho_0 -\frac{1}{2}$, and using Proposition \ref{thm:finite-sum-formula} to prove the analogues of (4.16) and (4.12) in \cite{bdgrt}. 
\end{proof}
 
\subsection{Asymptotic formula}
We can now deduce the asymptotic result from the above summation formula. 
\begin{theorem}\label{asymptotic-formula}
With the same assumptions as in Theorem \ref{thm:sumformula-sesqui}, for $\rho > \rho_{0} -\frac{1}{2}$ we have 
\begin{align} 
\nonumber \frac{1}{\Gamma(\rho +1)}\sum_{n\leq x}{c_{F}(n)(x-n)^{\rho}} + \frac{x^{\rho}}{2{\pi}^{\frac{3}{2}}i} \sum_{n\leq x} {b_{F}(n)g_{\rho}(n,x)}+\frac{x^{\rho}}{2{\pi}i} \sum_{n\leq x} {a_{F}(n)h_{\rho}(n,x)} &\\ \nonumber =Q_{\rho}(x) + \mathcal{O}(x^{\frac{\rho}{2} +\frac{1}{4}}) 
&\\  =  x^{1+\rho} \frac{i^{\frac{1}{2}} N^{-\frac{1}{4}} (d_{G}(2) 2 \pi)}{N^{\frac{1}{2}} \Gamma(\rho+2)} + \mathcal{O}(x^{\rho +\frac{1}{2}}).
\end{align}
\end{theorem}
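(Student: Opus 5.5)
The plan is to read the asymptotic off the summation formula of Theorem \ref{thm:sumformula-sesqui}. The left-hand side of that formula is exactly the left-hand side here, minus $x^{\rho}Q_{\rho}(x)$. So it suffices to show two things: each of the three series on the right-hand side of \eqref{eq:sumformula-sesqui} is $\mathcal{O}(x^{\frac{\rho}{2}+\frac14})$ (at least up to the stated error), and the explicit residue term $x^{\rho}Q_{\rho}(x)$ equals its dominant piece plus $\mathcal{O}(x^{\rho+\frac12})$. The normalization of the first displayed line should be read as $x^{\rho}Q_\rho(x)$.

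\textbf{Bessel-series terms.} For the $c_G$-series I will use the uniform bound $J_\nu(y)\ll y^{-1/2}$ for $y\ge 1$. With $y=4\pi\sqrt{nx/N}$, the $n$-th term is bounded by
$$
x^{\frac{\rho}{2}+\frac14}\,|c_G(n)|\,n^{-\frac{\rho}{2}-\frac14}(nx)^{-\frac14}.
$$
This series converges absolutely, since $\rho>\rho_0-\frac12$ forces $\frac{\rho}{2}+\frac12>\mu_G^3+1$. The total is therefore $\mathcal{O}(x^{\frac{\rho}{2}})$, and in particular $\mathcal{O}(x^{\frac{\rho}{2}+\frac14})$.

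For the $b_G$- and $a_G$-series there is an extra integral in $u$ (respectively $t$). There the Bessel argument $4\pi\sqrt{nx(1-u)/(Nu)}$ (respectively $4\pi\sqrt{nx/(Nt)}$) is at least a constant times $\sqrt{nx}$ on $u\in(0,\frac12]$ (respectively $t\in(0,1]$). So I again bound $J_{\rho+1}$ by the square root of the argument to the power $-1/2$. What is left are the weights $u^{\frac{\rho}{2}-1}(1-u)^{-\frac{1+\rho}{2}}\,u^{1/4}$ on $(0,\frac12]$, and $\log\frac{t+1}{2t}\,t^{\frac{\rho}{2}-1+\frac14}(1-t)^{-1/2}$ on $(0,1]$. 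Both are integrable for $\rho>0$; the logarithmic singularity at $t=0$ is harmless.

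This gives a bound of $x^{\frac{\rho+1}{2}}(nx)^{-1/4}$ per coefficient, times $|b_G(n)|n^{-\rho/2}$ or $|a_G(n)|n^{-\rho/2}$. The resulting series converge by the lower bound on $\rho$. Each of these two contributions is therefore $\mathcal{O}(x^{\frac{\rho}{2}+\frac14})$, which gives the first equality.

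\textbf{Residue term.} I expand $x^\rho Q_\rho(x)$ term by term, which produces powers $x^{\rho}$, $x^{\rho+1}$, $x^{\rho-\frac12}$ and $x^{\rho+\frac12}$. Only the $d_G(2)$ term, which comes from the pole at $s=1$, has order $x^{\rho+1}$; the others are all $\mathcal{O}(x^{\rho+\frac12})$. Since $\frac{\rho}{2}+\frac14<\rho+\frac12$ for $\rho>0$, the Bessel-series error is absorbed, and the second equality follows.

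\textbf{Main obstacle.} The main difficulty is the uniformity in the $b_G$ and $a_G$ terms. When $u$ or $t$ is near $1$, or when the Bessel argument is large, I must make sure the bound $|J_\nu(y)|\ll y^{-1/2}$ is applied only on a range where the argument stays bounded below. It is. If one wanted the sharper error, the oscillation of $J$ could be exploited, but that is not needed here.
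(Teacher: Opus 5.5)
Your proposal is essentially the paper's own proof. You bound the three Bessel series via $J_\nu(y)\ll y^{-1/2}$, the polynomial coefficient bounds and $\rho>\rho_0-\frac12$, and keep only the $d_G(2)$ residue (from the pole at $s=1$) in $x^{\rho}Q_\rho(x)$; but note one caveat inherited from the paper's formula for $Q_\rho$ rather than from your argument: the poles at $s=0$ and $s=\frac12$ are double when $d_F(1)\neq 0$, respectively $d_G(1)\neq 0$, so differentiating $(2\pi x/\sqrt{N})^{s}/\Gamma(\rho+1+s)$ there produces extra terms proportional to $d_F(1)\,x^{\rho}\log x$ and $d_G(1)\,x^{\rho+\frac12}\log x$ that $Q_\rho$ omits, and the second equality then holds only with error $\mathcal{O}(x^{\rho+\frac12}\log x)$ unless $d_G(1)=0$ (harmless for Theorem \ref{thm:asymptotic-formula-example}, where the $\epsilon$ absorbs the logarithm).
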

\begin{proof}
From \cite{bdgrt}, we have the following bounds
\begin{align}
x^{\frac{\rho}{2} +\frac{1}{4}} \sum_{n=1}^{\infty} \frac{c_{G}(n)}{n^{\frac{\rho}{2}+\frac{1}{4}}} J_{\rho +\frac{1}{2}} \left(4 \pi \sqrt{\frac{nx}{N}}\right) \ll x^{\frac{\rho}{2}} \sum_{n=1}^{\infty} \frac{1}{n^{\frac{\rho}{2} +\frac{1}{2} - \mu_{G}^{3}}}
\end{align}
and 
\begin{align}
\nonumber x^{\frac{\rho+1}{2}}\sum_{n=1}^{\infty} \frac{b_{G}(n)}{n^{\frac{\rho}{2}}} \int_{0}^{\frac{1}{2}} u^{\frac{\rho}{2} -1} (1-u)^{-\frac{(\rho+1)}{2}}   J_{{\rho} +1} \left( 4 {\pi} \sqrt{\frac{nx(1-u)}{Nu}}\right) du \\ \ll x^{\frac{\rho}{2} +\frac{1}{4}} \left( \sum_{n=1}^{\infty} \frac{1}{n^{\frac{\rho}{2} +\frac{1}{4} -  \mu_{G}^{2} }} \right) \int_{0}^{\frac{1}{2}} \frac{u^{\frac{\rho}{2} - \frac{3}{4}}}{(1-u)^{\frac{\rho}{2}+\frac{3}{4}}} du
\end{align}
Now for the other integral, we compute
\begin{align}
\nonumber x^{\rho+\frac{1}{2}} \sum_{n=1}^{\infty} a_{G}(n) \left(\frac{1}{ 4 \pi^{\frac{3}{2}}} \int_{0}^{1} \log \left(\frac{t+1}{2t} \right) t^{-1}(1-t)^{-\frac{1}{2}} \left(\frac{4 {\pi}^{2} n x}{Nt} \right)^{{\frac{1}{2}}-\frac{(\rho+1)}{2}} J_{\rho+1} \left( 2 \sqrt{\frac{4 \pi^{2} nx}{Nt}} \right) dt \right) \\
\ll x^{\frac{\rho}{2} +\frac{1}{4}} \left( \sum_{n=1}^{\infty} \frac{1}{n^{\frac{\rho}{2} +\frac{1}{4} -  \mu_{G}^{1} }} \right)  \int_{0}^{1} \log \left(\frac{t+1}{2t} \right) t^{\frac{\rho}{2}{- \frac{3}{4}}}(1-t)^{-\frac{1}{2}} dt 
\end{align}
which follows from the bound $J_{v}(x) \ll x^{-\frac{1}{2}}$. 

Now, Theorem \ref{thm:sumformula-sesqui}, we deduce for $\rho > \rho_{0} -\frac{1}{2}$,
\begin{align*}
\frac{1}{\Gamma(\rho +1)}\sum_{n\leq x}{c_{F}(n)(x-n)^{\rho}} + \frac{x^{\rho}}{2{\pi}^{\frac{3}{2}}i} \sum_{n\leq x} {b_{F}(n)g_{\rho}(n,x)}+\frac{x^{\rho}}{2{\pi}i} \sum_{n\leq x} {a_{F}(n)h_{\rho}(n,x)} \\= Q_{\rho}(x)  + \mathcal{O}(x^{\frac{\rho}{2} +\frac{1}{4}}) 
\end{align*}
Now since $\rho+1 > \rho +\frac{1}{2} > \frac{\rho}{2} +\frac{1}{2} >   \frac{\rho}{2} +\frac{1}{4}$ and $Q_{\rho}(x)$ is linear combination of $x^{\rho}, x^{\rho - \frac{1}{2}}, x^{\rho + \frac{1}{2}}, x^{\rho + 1}$, we obtain \eqref{asymptotic-formula}.
\end{proof}

\section{Real quadratic class numbers}\label{sec:example}
 
\subsection{Sesquiharmonic Maass forms related to real quadratic fields} 
As mentioned in the introduction, Theorem \ref{thm:asymptotic-formula-example} is obtained by applying Theorem \ref{thm:sumformula-sesqui} to a sesquiharmonic Maass form introduced in \cite{DukeImamogluToth}. To define it, we need a few definitions. First, we recall that for positive indices, the Hurwitz class numbers count $SL_{2}(\mathbb{Z})$ classes of binary quadratic forms inversely weighted by stabilizer size:
\begin{align}\label{eq:Hurwitz-defn}
H(n) := \sum_{Q \in SL_{2}(\mathbb{Z}) \backslash \mathcal{Q}_{-n}} { \frac{2}{|Stab(Q)|}},
\end{align}
where for any nonzero discriminant $d$, $\mathcal{Q}_{d}$ is the set of positive definite binary quadratic forms of discriminant $d$. We also use the convention $H(0) = -\frac{1}{12}$. 

Now for a fundamental discriminant $d >1$, define 
\begin{align}\label{eq:exp-trace formula}
\mathrm{Tr_{d}(1)} =  \frac{ h(d) \log \epsilon_d }{\pi \sqrt{d}} 
\end{align}
where $h(D)$ is the narrow class number and $\epsilon_D$ is the smallest unit greater than 1 of norm 1 in the quadratic order of conductor $D$. 

\begin{theorem}[Theorem 4 \cite{DukeImamogluToth}]\label{thm:DIT-form}
With $\mathrm{Tr_{d}(1)}$ defined appropriately for nonfundamental $d>1$, the function $\widehat{Z}_{+}(\tau)$ with Fourier expansion given by
\begin{align}\label{eq:exp-Z(tau)}
    \widehat{Z}_{+}(\tau) = &\sum_{d > 0}  \mathrm{Tr_{d}(1)} q^d + \sum_{d < 0} \frac{H(|d|)}{\sqrt{|d|}} \beta(4 |d| y) q^d  
    + \sum_{n \geq 1} 2\alpha(4 n^2 y) q^{n^2} - \frac{ \log y}{4 \pi} + \frac{\sqrt{y}}{3}
\end{align}
is a sesquiharmonic Maass form of weight $\frac{1}{2}$ for $\Gamma_0(4)$. 
\end{theorem}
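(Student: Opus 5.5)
The plan is to obtain $\widehat{Z}_{+}$ as a Laurent coefficient of a weight $\frac{1}{2}$ nonholomorphic Eisenstein series in the Kohnen plus space for $\Gamma_0(4)$, as Duke, Imamo\={g}lu and T\'{o}th do, and then to verify the three defining conditions of $V_{\frac{1}{2}}(4,\chi)$ together with the Fourier expansion \eqref{eq:exp-Z(tau)}.

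\emph{Construction and sesquiharmonicity.} Start with the Eisenstein series $E_{\frac{1}{2}}(\tau,s)$ for $\Gamma_0(4)$ with the theta multiplier $\nu_\theta$, built from the two cusps and projected to the plus space. It satisfies
$$\Delta_{\frac{1}{2}}E_{\frac{1}{2}}(\tau,s)=\lambda(s)\,E_{\frac{1}{2}}(\tau,s),$$
where $\lambda(s)$ vanishes at the harmonic point $s_0$. Near $s_0$ the series has at most a simple pole, and its residue is a multiple of $\theta(\tau)$. Let $\widehat{Z}_{+}$ be the constant term of the Laurent expansion at $s_0$, suitably normalized. Comparing Laurent coefficients in the eigen-equation gives $\Delta_{\frac{1}{2}}\widehat{Z}_{+}=c\,\theta$ for some constant $c$. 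Since $\theta\in M_{\frac{1}{2}}(4)$, this is condition (ii). Modularity (i) is inherited termwise from $E_{\frac{1}{2}}(\tau,s)$. The growth condition (iii) follows from the constant terms at each cusp, which are combinations of $v^{s}$, $v^{1-s}$ and their $s$-derivatives, so the growth is polynomial. As a consistency check, $\xi_{\frac{1}{2}}\widehat{Z}_{+}$ should be a multiple of Zagier's weight $\frac{3}{2}$ Eisenstein series $\sum H(n)q^n+\cdots$. Lemma~\ref{thm:polynomial-bound-coefficients}'s computation of $\xi_{\frac{1}{2}}$ shows that this matches the $b$- and $a$-coefficients, and also the terms $-\frac{\log y}{4\pi}$ and $\frac{\sqrt y}{3}$.

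\emph{Fourier expansion.} Compute the $d$-th Fourier coefficient of $E_{\frac{1}{2}}(\tau,s)$ by the usual Poisson summation/Kloosterman (Sali\'e) sum evaluation. In the plus space it equals, up to gamma factors, $L(2s',\chi_d)$ times a finite Dirichlet polynomial accounting for the nonfundamental part of $d$, multiplied by a Whittaker function $W_{\pm}(4\pi|d|y,s)$. Then differentiate at $s_0$, treating the three sign cases separately.
\begin{enumerate}
\item For $d>0$ not a square, the Whittaker function at $s_0$ is $e^{-2\pi dy}$. The $L$-value is $L(1,\chi_d)=2h(d)\log\epsilon_d/\sqrt d$, which gives $\mathrm{Tr}_d(1)q^d$ by the class number formula. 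Nonfundamental $d$ are handled by the divisor-sum correction, and this is what ``defined appropriately'' means.
\item For $d<0$, the Whittaker function becomes the incomplete gamma function, i.e.\ $\beta(4|d|y)$. The value $L(0,\chi_d)$ gives $H(|d|)/\sqrt{|d|}$.
\item For $d=n^2$, $\chi_d$ is trivial, so the $L$-factor is $\zeta$, which has a pole. Combined with the vanishing gamma factor, this forces one to keep the $s$-derivative of the Whittaker function. That derivative is expressed via the integral defining $\alpha$, giving $2\alpha(4n^2y)q^{n^2}$.
\end{enumerate}
The constant term's Laurent coefficient produces $-\frac{\log y}{4\pi}+\frac{\sqrt y}{3}$, with the $\frac{1}{3}$ coming from $\zeta(2)$-type values and $H(0)=-\frac{1}{12}$.

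\emph{Main obstacle.} The hardest step is the square-index case. There one must differentiate the Whittaker function in $s$ and identify the result exactly with $\alpha(y)$ from \eqref{expression-alpha}, keeping track of the cancellation between the pole of $\zeta$ and the zero of the gamma factor. I would first try writing the Whittaker function as a Mellin-type integral $\int_0^\infty t^{a(s)}(1+t)^{b(s)}e^{-\pi yt}\,dt$, so that differentiating in $s$ produces the $\log(1+t)$ factor directly. Pinning down the normalizing constants (so that the coefficients are exactly $2$, $\frac{1}{3}$ and $-\frac{1}{4\pi}$) is routine but delicate bookkeeping. For it I would rely on the explicit computation in \cite{DukeImamogluToth}.
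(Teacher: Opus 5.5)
The paper does not prove this statement itself: it is quoted from Duke--Imamo\={g}lu--T\'{o}th. Your plan is the same Kronecker-limit-type Eisenstein series argument as that source, so it takes essentially the same route. You take the constant term of the Laurent expansion of the weight $\frac{1}{2}$ plus-space Eisenstein series at the harmonic point where its residue is a multiple of $\theta$, read off $\Delta_{\frac{1}{2}}\widehat{Z}_{+}=c\,\theta$ from the eigenvalue equation, and get $\alpha$ from the $s$-derivative of the Whittaker function at square indices.

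One small correction to your case (3): at $d=n^{2}$ there is no vanishing gamma factor cancelling the pole of $\zeta$. That simple pole survives, and its residues are exactly what make up the multiple of $\theta$. It is this surviving pole that makes the Laurent constant term pick up the $s$-derivative of the Whittaker function, and hence the $\log(1+t)$ inside $\alpha$.
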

\begin{remark} 
For nonsquare $d >1$, (1.5) of \cite{AAS} shows that
\begin{align*}
\mathrm{Tr_{d}(1)} =  \frac{1}{\pi \sqrt{d}} \sum_{\substack{\ell^2 | d \\ d/\ell^2 \equiv 0,1 \pmod{4}} } h(d/\ell^2) \log \epsilon_{d/\ell^2} 
\end{align*}
The coefficients of square index of this function were computed in \cite{beckwith2024modular}:
$$
\mathrm{Tr_{d^2}(1)} = 
\frac{2}{3\pi} \left(\gamma - 2\frac{\zeta'(2)}{\zeta(2)} - \log(4) + \frac{1}{2}\log(\pi) - \sum_{\substack{f|d \\ r | f \\ (2,fr)=1}} \frac{\mu(f) \log (f r^2 )}{fr} \right).
$$ 
\end{remark}
\begin{remark}
Higher level analogs of $\widehat{Z}_{+}(\tau)$ were defined in \cite{BeckwithMono-I}. In particular, for every square free odd $N$, Theorem 1.3 of that paper gives the Fourier expansion of the analogue of $ \widehat{Z}_{+}(\tau)$ for the group $\Gamma_0(4N)$.
\end{remark}

 \subsection{The $W(4)$ Operator}
To compute the  $\widehat{Z}_{+}(\tau)|W(4)$, we use the equation (2.3) in \cite{DukeImamogluToth} which says that if $f$ is a smooth function of weight $k \in \mathbb{Z} + 1/2$ on $\Gamma_{0}(4)$ which has the following Fourier expansion
\begin{align*}
f(\tau)= \sum_{n} {a(n;y) e(nx)}
\end{align*}
where $e(z)= e^{2 \pi iz}$, then 
\begin{align}\label{eq:exp-1/4tau}
f\left(-\frac{1}{4 \tau}\right)= \left(\frac{2\tau}{i}\right)^{k} \alpha f^{e}(\tau)
\end{align}
where $f^{e}(\tau) = \sum_{2|n} a(n;\frac{y}{4}) e(\frac{nx}{4})$ and $\alpha = (-1)^{\lfloor \frac{2k+1}{4} \rfloor} 2^{-k +\frac{1}{2}}$.\\
\begin{lemma}
Using the Frick involution we have 
\begin{align}\label{Z(tau)|W(4)}
\widehat{Z}_{+}(\tau)|W(4) =   &\sum_{d > 0} \frac{(1-i)}{\sqrt{2}} \mathrm{Tr_{d}(1)} q^{d} + \sum_{d < 0} \frac{(1-i)}{2\sqrt{2}} \frac{H(|4d|)}{\sqrt{|d|}} \beta( 4|d| y)  q^{d}  \\ \nonumber &+  \sum_{n > 0} \sqrt{2}(1-i)\alpha(4 n^2 y)  q^{n^{2}} - \frac{(1-i)}{\sqrt{2}} \frac{1}{4 \pi} \log y + \frac{(1-i)}{\sqrt{2}} \frac{\sqrt{y}}{6} \\ \nonumber & + \frac{(1-i)}{\sqrt{2}}\frac{1}{2 \pi} \log 2.  
\end{align}
\end{lemma}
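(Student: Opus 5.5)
The plan is to compute $\widehat{Z}_{+}|_{\frac12}W(4)(\tau) = 4^{\frac14}(4\tau)^{-\frac12}\widehat{Z}_{+}(-1/4\tau)$ directly from \eqref{eq:exp-1/4tau} with $k=\frac12$, and then re-express the result termwise in the normalization of \eqref{eq:fourier}.

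\textbf{Step 1: the constant factor.} For $k=\frac12$ we have $\alpha = (-1)^{\lfloor 1/2\rfloor}2^{0}=1$, so
\[
\widehat{Z}_{+}(-1/4\tau)=(2\tau/i)^{\frac12}\,\widehat{Z}_{+}^{e}(\tau).
\]
Multiplying by $4^{\frac14}(4\tau)^{-\frac12}=\sqrt2\cdot\frac{1}{2}\tau^{-\frac12}$ gives
\[
\widehat{Z}_{+}|W(4)(\tau)= i^{-\frac12}\,\widehat{Z}_{+}^{e}(\tau)=\tfrac{1-i}{\sqrt2}\,\widehat{Z}_{+}^{e}(\tau),
\]
using principal branches. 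This accounts for the global factor $\frac{1-i}{\sqrt2}$ in \eqref{Z(tau)|W(4)}.

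\textbf{Step 2: compute $\widehat{Z}_{+}^{e}$ term by term.} Here we keep only the even indices $n$ and replace $y\mapsto y/4$ and $x\mapsto x/4$.

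\emph{Holomorphic part.} The term $\mathrm{Tr}_{n}(1)e(nx/4)e^{-2\pi n y/4}$ with $n=4d$ gives $\mathrm{Tr}_{4d}(1)q^{d}$. The terms with $n\equiv 2\pmod 4$ vanish, since $n$ is then not a discriminant. To match the stated form one needs $\mathrm{Tr}_{4d}(1)$ expressed via $\mathrm{Tr}_d$. I expect that the claimed coefficient $\mathrm{Tr}_d(1)$ reflects a relabeling or the plus-space structure, and I would check it against the definition in \cite{DukeImamogluToth}. This matching of normalizations of the traces is, I think, the main obstacle.

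\emph{Nonholomorphic $\beta$ part.} For $n=4d<0$ we get $\frac{H(4|d|)}{2\sqrt{|d|}}\beta(4|d|y)q^{d}$, because $\beta(4\cdot 4|d|\cdot y/4)=\beta(4|d|y)$ and $\sqrt{|4d|}=2\sqrt{|d|}$. This matches the stated term.

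\emph{$\alpha$ part.} The index $n^2$ is even iff $n=2m$, giving $2\alpha(16m^2\,y/4)e(m^2x)e^{-2\pi m^2 y}=2\alpha(4m^2y)q^{m^2}$. After the overall factor this yields $\sqrt2(1-i)\alpha(4m^2y)q^{m^2}$, as claimed.

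\emph{Constant terms.} Replacing $y\mapsto y/4$ gives
\[
-\tfrac{1}{4\pi}\log(y/4)+\tfrac{\sqrt{y/4}}{3}=-\tfrac{1}{4\pi}\log y+\tfrac{\log 2}{2\pi}+\tfrac{\sqrt y}{6}.
\]
This is exactly the constant part of \eqref{Z(tau)|W(4)}.

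\textbf{Step 3: verification.} The only nonmechanical inputs are two. First, the applicability of \eqref{eq:exp-1/4tau}: $\widehat{Z}_{+}$ is smooth of weight $\frac12$ on $\Gamma_0(4)$ by Theorem~\ref{thm:DIT-form}, so it applies. Second, the careful branch choice $(2\tau/i)^{1/2}(4\tau)^{-1/2}=\frac{1}{\sqrt2}\,i^{-1/2}$, consistent with the $i^{\frac12}$ appearing in Theorem~\ref{thm:functional-equation}. I would double-check this branch on $\tau=iy$, where both sides are real and positive up to the stated factor.
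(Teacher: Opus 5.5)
Your argument is the paper's: apply \eqref{eq:exp-1/4tau} with $k=\frac12$ and $\alpha=1$ to get $\widehat{Z}_{+}|W(4)=i^{-\frac12}\widehat{Z}_{+}^{e}=\frac{1-i}{\sqrt2}\widehat{Z}_{+}^{e}$, then read off $\widehat{Z}_{+}^{e}$ term by term. Your $\beta$-, $\alpha$- and constant-term computations agree with the paper's. The only loose end is the one you flag, and there your proposed resolution points the wrong way.

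The coefficient you computed, $\frac{1-i}{\sqrt2}\mathrm{Tr}_{4d}(1)$, is the correct one. It is exactly what the paper's own proof reaches: its last display for $\widehat{Z}_{+}^{e}$ has $\mathrm{Tr}_{4d}(1)q^{d}$, and the paper later uses $c_G(n)=\frac{1-i}{\sqrt2}\mathrm{Tr}_{4n}(1)$. So the $\mathrm{Tr}_{d}(1)$ in the displayed statement is a misprint. No relabeling or plus-space argument can turn one into the other, because $\mathrm{Tr}_{4d}(1)\neq \mathrm{Tr}_{d}(1)$ in general. For example, the divisor-sum formula in the remark, with $h(5)=h(20)=1$ and $\epsilon_{20}=\epsilon_5^{3}$, gives $\mathrm{Tr}_{20}(1)=\frac{\log\epsilon_{20}+\log\epsilon_{5}}{\pi\sqrt{20}}=2\,\mathrm{Tr}_{5}(1)$. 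So do not try to reconcile the printed coefficient with the definition; conclude the lemma with $\mathrm{Tr}_{4d}(1)$.

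One smaller point concerns Step 3. The identity \eqref{eq:exp-1/4tau} does not hold for an arbitrary smooth weight-$k$ function on $\Gamma_0(4)$. For $\theta^5$ in weight $\frac52$ it would force $1=-\frac14$ on constant terms. It is a plus-space identity, so its use should be justified by the fact that $\widehat{Z}_{+}$ has coefficients supported on indices $\equiv 0,1 \pmod 4$. That is the same fact you use in Step 2 to discard indices $\equiv 2 \pmod 4$, a step the paper leaves implicit.
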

\begin{proof}
 Setting $k=\frac{1}{2}$ and using \eqref{eq:exp-1/4tau}, we have 
\begin{align}\label{eq:exp-Z(tau)W4}
 \widehat{Z}_{+}(\tau)|W(4)&= \nonumber 2^{-\frac{1}{2}} \tau^{-\frac{1}{2}} \widehat{Z}_{+}\left(-\frac{1}{4\tau}\right)\\
&= \nonumber 2^{-\frac{1}{2}} \tau^{-\frac{1}{2}} 2^{\frac{1}{2}} \tau^{\frac{1}{2}} i^{-\frac{1}{2}} \widehat{Z}_{+}^{e}(\tau)\\
&= i^{-\frac{1}{2}} \widehat{Z}_{+}^{e}(\tau).
\end{align}
Let 
\begin{align*}
&\sum_{d > 0}  \mathrm{Tr_{d}(1)}  q^d + \sum_{d < 0} \frac{H(|d|)}{\sqrt{|d|}} \beta(4 |d| y) q^d +\frac{\sqrt{y}}{3} = \sum_{d} a(d) \mathcal{W}_{d}(y) e(dx)
\end{align*} 
  where 
  \begin{align*}
\mathcal{W}_{d}(y) := \begin{cases} 
  	   |d|^{-\frac{1}{2}} \beta(4 |d| y) e^{-2 \pi dy} , & \text{if} \ d <0 \\
    	  -4y^{\frac{1}{2}}, & \text{if } d=0 \\
    	   e^{-2 \pi dy} & \text{if } d>0
	\end{cases}
\end{align*}
and 
\begin{align*}
a(d) = \begin{cases}
H(|d|) & \text{if} \ d <0\\
-\frac{1}{12} & \text{if} \ d=0\\
\mathrm{Tr_{d}(1)} & \text{if} \ d>0.
\end{cases}
\end{align*}
On the other hand, let 
\begin{align*}
\sum_{n \geq 1} 2\alpha(4 n^2 y) q^{n^2}- \frac{ \log y}{4 \pi}= \sum_{n \ge 0} b(n) \mathcal{M}_{n}(y) e(dx) 
\end{align*}
where 
  \begin{align*}
\mathcal{M}_{n}(y) := \begin{cases} 
  	    \log y , & \text{if } n=0 \\
    	  \alpha( 4 n^{2} y) e^{-2 \pi n^{2} y} & \text{if } n>0
	\end{cases}
\end{align*}
and 
\begin{align*}
b(n) = \begin{cases}
2 & \text{if} \ n>0 \ \text{and}\ \Box \\
-\frac{1}{4 \pi} & \text{if} \ n=0\\
\end{cases}
\end{align*}

Therefore
\begin{align*}
\widehat{Z}_{+}(\tau)= \sum_{d} a(d;y) e(dx)+\sum_{n} b(n;y)  e(nx)  
\end{align*} 
where $ a(d;y)= a(d) \mathcal{W}_{d}(y)$ and $b(n;y)= b(n) \mathcal{M}_{n}(y)$.
Then 
\begin{align}\label{eq:exp-Z-even}
\widehat{Z}_{+}^{e}(\tau)= \sum_{ \substack{ d \equiv 0 (2)}} a\left(d;\frac{y}{4}\right) e\left(\frac{dx}{4}\right)+\sum_{ \substack{n \ge 0\\ n \equiv 0 (2)}} b\left(n;\frac{y}{4}\right)  e\left(\frac{nx}{4}\right)  
\end{align}
where
\begin{align}\label{eq:exp-a-W}
a\left(d;\frac{y}{4}\right)= a(d)\mathcal{W}_{d} \left(\frac{y}{4}\right) := \begin{cases} 
  	   H(|d|) |d|^{-\frac{1}{2}} \beta( |d| y) e^{- \frac{2 \pi dy}{4}} , & \text{if} \ d <0 \\
    	  \frac{1}{3}\left(\frac{y}{4}\right)^{\frac{1}{2}}, & \text{if } d=0 \\
    	 \mathrm{Tr_{d}(1)} e^{- \frac{2 \pi dy}{4}} & \text{if } d>0
	\end{cases}
\end{align}
and
 \begin{align}\label{eq:exp-b-M}
 b\left(n;\frac{y}{4}\right)= b(n)\mathcal{M}_{n} \left(\frac{y}{4}\right) := \begin{cases} 
  	   - \frac{1}{4 \pi}\log \left(\frac{y}{4}\right)   , & \text{if } n=0 \\
    	  2 \alpha(n^{2} y)  e^{- \frac{2 \pi n^{2}y}{4}} & \text{if } n>0
	\end{cases}
\end{align}
Now substituting \eqref{eq:exp-a-W} and \eqref{eq:exp-b-M} in \eqref{eq:exp-Z-even}, we get 
\begin{align*}
 \widehat{Z}_{+}^{e}(\tau) 
  = &\sum_{\substack{d > 0\\ d \equiv 0 (2)}}  \mathrm{Tr_{d}(1)}  q^{d/4} + \sum_{\substack{d < 0\\ d \equiv 0 (2)}} \frac{H(|d|)}{\sqrt{|d|}} \beta( |d| y)  q^{d/4}  + \sum_{\substack{n > 0\\ n \equiv 0 (2)}} 2\alpha( n^2 y)  q^{n^{2}/4} - \frac{1}{4 \pi} \log \left(\frac{y}{4}\right) + \frac{\sqrt{y}}{6}   
\end{align*}
Therefore, 
\begin{align}\label{eq:exp-Z-even}
\widehat{Z}_{+}^{e}(\tau)= &\sum_{d > 0}  \mathrm{Tr_{4d}(1)}q^{d} + \sum_{d < 0} \frac{H(|4d|)}{\sqrt{|4d|}} \beta( 4|d| y)  q^{d}  + \sum_{n > 0} 2\alpha(4 n^2 y)  q^{n^{2}} - \frac{1}{4 \pi} \log \left(\frac{y}{4}\right) + \frac{\sqrt{y}}{6}  
\end{align}
Now by substituting $i^{-\frac{1}{2}}= \frac{(1-i)}{\sqrt{2}}$ and \eqref{eq:exp-Z-even} in \eqref{eq:exp-Z(tau)W4} we finally obtain the required expression.
\end{proof}   
 
\subsection{Summation formula for $\mathrm{Tr_{n}(1)}$}
Now to derive the summation formula for $\mathrm{Tr_{n}(1)}$, we first have the following by comparing the coefficients of \eqref{eq:exp-Z(tau)} with \eqref{eq:fourier},

\begin{align*}
c_{F}(n) &=  \mathrm{Tr_{n}(1)}, \quad b_{F}(n) =  \frac{H(|n|)}{\sqrt{|n|}} \\
a_{F}(n)&=  2  \hspace{1cm} \text{if} \ n = \Box \ \text{and} \ 0 \ \text{otherwise}\\
d_{F}(2) &=  \frac{1}{3}, \quad d_{F}(1) =  -\frac{1}{4 \pi}, \quad d_{F}(0) = 0 
\end{align*}
  and similarly, by comparing the coefficients of \eqref{Z(tau)|W(4)} with the Fourier expansion of $G(\tau)$, we get
\begin{align*}
c_{G}(n) &=  \frac{(1-i)}{\sqrt{2}} \mathrm{Tr_{4n}(1)}, \quad b_{G}(n) = \frac{(1-i)}{2\sqrt{2}} \frac{H(|4d|)}{\sqrt{|d|}} \\
a_{G}(n)&= \sqrt{2}(1-i) \hspace{1cm}  \text{if} \ n = \Box \ \text{and} \ 0 \ \text{otherwise}\\
d_{G}(2) &= \frac{(1-i)}{6\sqrt{2}}, \quad d_{G}(1) = -\frac{(1-i)}{\sqrt{2}} \frac{1}{4 \pi}, \quad d_{G}(0) = \frac{(1-i)}{\sqrt{2}} \frac{1}{2 \pi} \log 2.
 \end{align*}
Moreover, for any $\epsilon >0$, we have $\mu_F^{1} =\mu_F^{2}= \mu_F^{3}=\epsilon$ by the growth of $H(n)$ and $\mathrm{Tr_{n}(1)}$ (\cite{DukeImamogluToth}) and from \eqref{Z(tau)|W(4)}, we get $\mu_G^{1} =\mu_G^{2}= \mu_G^{3}=\epsilon$.
With all these conditions, as an application of Theorem \ref{thm:sumformula-sesqui}, we deduce the following.
\begin{theorem}
Let $H(n)$ be the Hurwitz class number and $\mathrm{Tr_{n}(1)}$ as defined in \eqref{eq:exp-trace formula}.  For $\rho > \frac{3}{2}$, we have 
\begin{align}
\nonumber \frac{1}{\Gamma(\rho +1)} \sum_{n\leq x}{\mathrm{Tr_{n}(1)}(x-n)^{\rho}} + \frac{x^{\rho}}{2{\pi}^{\frac{3}{2}}i} \sum_{n\leq x} {\frac{H(|n|)}{\sqrt{|n|}}g_{\rho}(n,x)}+\frac{x^{\rho}}{2{\pi}i} \sum_{n\leq \sqrt{x}} {2h_{\rho}(n^{2},x)} \\ \nonumber
-x^{\rho} \left( \frac{ - \log 2}{4 \pi \Gamma(\rho +1)} + \frac{\pi x}{6 \sqrt{2} \Gamma(\rho+2)}- \frac{1}{3 \sqrt{2 \pi x} \Gamma(\rho +\frac{1}{2})} + \frac{3\sqrt{\pi x}(\log 2)}{4 \pi \Gamma(\rho + \frac{3}{2})}\right) \\ \nonumber
= -\frac{ x^{\frac{\rho}{2} +\frac{1}{4}}}{ \pi^{\rho}} \sum_{n=1}^{\infty} {\frac{\mathrm{Tr_{4n}(1)}}{n^{\frac{\rho}{2} +\frac{1}{4}}}} J_{{\rho} +\frac{1}{2}} \left(2 {\pi} \sqrt{nx}\right) \\ \nonumber
 -\frac{x^{({\rho}+1)/2}}{2 \pi^{\rho}}  \sum_{n=1}^{\infty} {\frac{H(4n)}{n^{\frac{\rho}{2} +\frac{1}{2}}}} \int_{0}^{1/2} {\frac{u^{\frac{\rho}{2}-1}}{(1-u)^{(1+{\rho})/2}} J_{{\rho} +1} \left( 2 {\pi} \sqrt{\frac{nx(1-u)}{u}}\right) du} \\ -\frac{x^{({\rho}+1)/2}}{2 \pi^{\rho +1}} \sum_{n=1}^{\infty} {\frac{1}{n^\rho} \int_{0}^{1} \log \left( \frac{t+1}{2t}\right) t^{\frac{\rho}{2} -1} (1-t) ^{-\frac{1}{2}} J_{{\rho} +1} \left( 2{\pi} n \sqrt{\frac{x}{t}}\right) dt }
\end{align}
where $g_{\rho}(n,x)$ and $h_{\rho}(n^{2},x)$ are as in Theorem \ref{thm:sumformula-sesqui}.
\end{theorem}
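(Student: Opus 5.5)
The plan is to specialize Theorem \ref{thm:sumformula-sesqui} to $F=\widehat{Z}_{+}$ on $\Gamma_0(4)$, so $N=4$, with $G=\widehat{Z}_{+}|W(4)$ given by \eqref{Z(tau)|W(4)}. The coefficient dictionary just listed supplies every input.

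\textbf{Step 1: admissible range of $\rho$.} Since all $\mu$'s may be taken equal to $\epsilon$, we have $\rho_0=2+2\epsilon$. The theorem therefore applies for $\rho>\frac32+2\epsilon$, and letting $\epsilon\to 0$ gives the stated range $\rho>\frac32$. The growth inputs are that $H(n)\ll n^{1/2+\epsilon}$, which makes $b_F(n)=H(n)/\sqrt n\ll n^{\epsilon}$, and the bound $\mathrm{Tr}_n(1)\ll n^{\epsilon}$ from \cite{DukeImamogluToth}.

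\textbf{Step 2: the left-hand side.} Substitute $c_F,b_F,a_F$ into the three sums. Because $a_F$ is supported on squares with value $2$, the third sum collapses to $\sum_{n\le\sqrt x}2h_\rho(n^2,x)$.

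\textbf{Step 3: the residue term $Q_\rho(x)$.} Insert $d_F(0)=0$, $d_F(1)=-\frac1{4\pi}$ and $d_F(2)=\frac13$, together with $d_G(0)=i^{-1/2}\frac{\log 2}{2\pi}$, $d_G(1)=-\frac{i^{-1/2}}{4\pi}$ and $d_G(2)=\frac{i^{-1/2}}6$. Use $i^{1/2}i^{-1/2}=1$, $\log\sqrt4=\log 2$ and $N^{1/4}=\sqrt2$. For example, $-d_F(0)+d_F(1)\log 2=-\frac{\log 2}{4\pi}$, and the $x$-term becomes $\frac{2\pi x}{6\sqrt2\cdot 2\,\Gamma(\rho+2)}=\frac{\pi x}{6\sqrt2\,\Gamma(\rho+2)}$. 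The $d_G(0),d_G(1)$ contribution combines $\frac{\log 2}{2\pi}+\frac{\log 2}{4\pi}=\frac{3\log 2}{4\pi}$, multiplied by $(2\pi x)^{1/2}/\sqrt2=\sqrt{\pi x}$. I would carry out this bookkeeping carefully. The normalizing factors $N^{\mp1/4}$ printed in $Q_\rho$ look inconsistent, so I would rederive each residue directly from Theorem \ref{thm:functional-equation}, namely from $R_j=\operatorname{Res}_{s=j}\Lambda_N(F,s)\,(2\pi x/\sqrt N)^s/\Gamma(\rho+1+s)$. For the double poles at $0$ and $\frac12$, I would check that the $d(1)/s^2$ pieces contribute only terms matching the displayed constants, or else are absorbed in them.

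\textbf{Step 4: the dual side.} Substitute $\sqrt N/(2\pi)=1/\pi$ and $4\pi\sqrt{nx/N}=2\pi\sqrt{nx}$. The prefactors $-i^{1/2}c_G(n)$, $-i^{1/2}b_G(n)$ and $-i^{1/2}a_G(n)$ become real because of the factor $(1-i)/\sqrt2=i^{-1/2}$. This gives $\mathrm{Tr}_{4n}(1)$, then $\frac{H(4n)}{2\sqrt n}$, and finally $\sqrt2\cdot\sqrt2=2$ on squares. In the $a_G$ sum, replacing $n$ by $m^2$ turns $n^{-\rho/2}$ into $m^{-\rho}$ and the Bessel argument into $2\pi m\sqrt{x/t}$. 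The powers of $\pi$ then collect into $\pi^{-\rho}$, $\frac{1}{2\pi^{\rho}}$ and $\frac{1}{2\pi^{\rho+1}}$ respectively.

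\textbf{Main obstacle.} Everything here is specialization. The only real risk is the constant tracking in Step 3, particularly at the double poles and with the powers $N^{\pm1/4}$. I would verify it by recomputing the Laurent expansions of $\Lambda_4(F,s)$ from \eqref{eq:Lambda-formula}.
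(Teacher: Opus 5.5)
Your route is the paper's own: the statement is Theorem \ref{thm:sumformula-sesqui} with $N=4$, $F=\widehat{Z}_{+}$ and $G=\widehat{Z}_{+}|W(4)$. Your Steps 1, 2 and 4 are correct, and so are the simple-pole parts of Step 3. The powers $N^{\mp\frac14}$ in $Q_\rho$ are in fact consistent.

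The genuine gap is the double-pole check you postpone in Step 3: it cannot come out as you hope. Put $X=2\pi x/\sqrt{N}=\pi x$ and $\phi(s)=X^{s}/\Gamma(\rho+1+s)$, with $\psi=\Gamma'/\Gamma$.

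If $\Lambda_4(F,s)=A(s-j)^{-2}+B(s-j)^{-1}+\cdots$ near $s=j$, then the residue of $\Lambda_4(F,s)\phi(s)$ there is $B\phi(j)+A\phi'(j)$. Here $\phi'(j)=\phi(j)\bigl(\log X-\psi(\rho+1+j)\bigr)$. The $Q_\rho(x)$ of Theorem \ref{thm:sumformula-sesqui} records only $B\phi(j)$.

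By \eqref{eq:Lambda-formula}, $A=d_F(1)=-\frac{1}{4\pi}$ at $j=0$ and $A=i^{\frac12}d_G(1)=-\frac{1}{4\pi}$ at $j=\frac12$. So the residue theorem adds the following terms to the bracket multiplying $-x^{\rho}$:
\begin{equation*}
-\frac{\log(\pi x)-\psi(\rho+1)}{4\pi\,\Gamma(\rho+1)}\;-\;\frac{\sqrt{\pi x}\,\bigl(\log(\pi x)-\psi(\rho+\frac32)\bigr)}{4\pi\,\Gamma(\rho+\frac32)}.
\end{equation*}
These depend on $x$ through $\log x$ and $\sqrt{x}\log x$, so they cannot be ``absorbed'' into the displayed constants. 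Nothing else in the contour argument cancels them. The $W_{\frac12}$ and $Z$ integrals are evaluated exactly as the $g_\rho$ and $h_\rho$ sums, and the shifted integral is evaluated exactly as the three dual series.

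So if you carry out Step 3 honestly, you prove the identity with the bracket replaced by
\begin{equation*}
-\frac{\log(2\pi x)-\psi(\rho+1)}{4\pi\,\Gamma(\rho+1)}+\frac{\pi x}{6\sqrt{2}\,\Gamma(\rho+2)}-\frac{1}{3\sqrt{2\pi x}\,\Gamma(\rho+\frac12)}+\frac{\sqrt{\pi x}\,\bigl(\log\frac{8}{\pi x}+\psi(\rho+\frac32)\bigr)}{4\pi\,\Gamma(\rho+\frac32)},
\end{equation*}
not the displayed one. The displayed version comes from inserting the paper's $Q_\rho$ verbatim, and it inherits that formula's omission of the $A\phi'(j)$ terms. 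You should state the corrected identity, or flag the discrepancy, rather than assert the printed constants.

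After multiplying by $x^{\rho}$, the extra terms are $O(x^{\rho+\frac12}\log x)$. So the asymptotic in Theorem \ref{thm:asymptotic-formula-example} is unaffected.
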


\subsection{Proof of Theorem \ref{thm:asymptotic-formula-example}} 
From Theorem \ref{asymptotic-formula}, we have 
\begin{align*} 
\frac{1}{\Gamma(\rho +1)} \sum_{n\leq x}{\mathrm{Tr_{n}(1)}(x-n)^{\rho}} + \frac{x^{\rho}}{2{\pi}^{\frac{3}{2}}i} \sum_{n\leq x} {\frac{H(|n|)}{\sqrt{|n|}}g_{\rho}(n,x)}+\frac{x^{\rho}}{2{\pi}i} \sum_{n\leq \sqrt{x}} {2h_{\rho}(n^{2},x)} 
&\\  = \frac{ \pi x^{\rho+1}}{6 \sqrt{2} \Gamma(\rho+2)}  + \mathcal{O}(x^{\rho +\frac{1}{2}}).
\end{align*}
Now, by Lemma \ref{thm:finite-sum-bound}, for any $\epsilon >0$, the sum $\left|\sum_{n\leq \sqrt{x}} {2h_{\rho}(n^{2},x)}\right| \leq C x^{\alpha}$ for $\alpha = \frac{1}{2}+\epsilon$. Therefore we have 
\begin{align}\label{eq:asympt-trace-Hurwitz}
\frac{1}{\Gamma(\rho +1)} \sum_{n\leq x}{\mathrm{Tr_{n}(1)}(x-n)^{\rho}}+ \frac{x^{\rho}}{2{\pi}^{\frac{3}{2}}i} \sum_{n\leq x} {\frac{H(|n|)}{\sqrt{|n|}}g_{\rho}(n,x)} &=  \frac{ \pi x^{\rho+1}}{6 \sqrt{2} \Gamma(\rho+2)} +\mathcal{O}(x^{\rho+\frac{1}{2}+\epsilon}).
\end{align}

We can simplify the sum involving $g_{\rho}(n,x)$ as follows. 
Recall that
$$
g_{\rho}(n,x) := \frac{2 \pi i}{\Gamma(\rho+\frac{1}{2})} \left(1+ \frac{n}{x} \right)^{\rho} \left( \frac{\sqrt{\pi} \Gamma(\rho+\frac{1}{2})}{\Gamma(\rho+1)} - \mathbf{B}_ {{\frac{2n}{x+n}}}\left(\frac{1}{2}, \rho+\frac{1}{2}\right)\right).
$$
Using \protect{\cite[(8.17.4)]{NIST:DLMF}}, we have
\begin{align*}
\left(1+ \frac{n}x \right)^{\rho} & \left( 1 - \frac{\Gamma(\rho +1)}{\Gamma(\rho+\frac{1}{2}) \sqrt{\pi}} \mathbf{B}_{{\frac{2n}{x+n}}}\left(\frac{1}{2}, \rho+\frac{1}{2}\right)\right)  \\
&= \left(1+ \frac{n}x \right)^{\rho}  I_{\frac{x-n}{x+n}} ( \rho+\frac{1}{2}, \frac{1}{2} ) \\
&= (1- n/x)^{\rho} \frac{ ((x/n) -1)^{\frac{1}{2}}  }{\sqrt{2} (\rho + \frac{1}{2})} { }_2F_1 \left(1, \frac{1}{2}, \rho + \frac{3}{2}; \frac{1}{2} ( 1 - \frac{x}{n}) \right)
\end{align*}

This completes the proof of Theorem \ref{thm:asymptotic-formula-example}.

\begin{example}
Let $\rho =2$. Then Theorem \ref{thm:asymptotic-formula-example} can be written as
\begin{align*}
\sum_{n\leq x}{\mathrm{Tr_{n}(1)}(1-(n/x))^2}+  \sum_{1 \le n\leq x} \frac{H(n)}{\sqrt{n}}  \left(1+ \frac{n}{x} \right)^2 \left( 1 - \frac{8}{3 \pi} \mathbf{B}_{{\frac{2n}{2+n}}}\left(\frac{1}{2},  \frac{5}{2}\right) \right)= \frac{\pi x}{18 \sqrt{2}} + \mathcal{O}(x^{\frac{1}{2}+\epsilon}).
\end{align*}
We compute the two sums divided by $x$ for several large values of $x$ in the following table:
\begin{center}
\begin{tabular}{  |c| c| c| }
\hline
 $x$ & $S_1(x)/x$   & $S_2(x)/x$ \\ 
 \hline 
$10^2$ & 0.3233 & 0.1873  \\  
$10^3$ & 0.495772 & 0.25618   \\
$10^4$ &0.60973 & 0.2809  \\
$2 \cdot 10^4$ & 0.63162  & 0.2844 \\
$10^5$ & 0.670 & 0.2891 \\
\hline
\end{tabular}
\end{center}
where
\begin{align*}
S_1(x) &:= \frac{ 18 \sqrt{2}}{\pi} \sum_{n\leq x}{\mathrm{Tr_{n}(1)}(1-(n/x))^2}, \\
S_2(x)&:= \frac{ 18 \sqrt{2}}{\pi}   \sum_{1 \le n\leq x} \frac{H(n)}{\sqrt{n}}  \left(1+ \frac{n}{x} \right)^2 \left( 1 - \frac{8}{3 \pi} \mathbf{B}_{\frac{2n}{x+n}}\left(\frac{1}{2},  \frac{5}{2}\right) \right)
\end{align*}
The sum of the two columns shows slow convergence toward 1, as expected.
\end{example}

\begin{corollary}\label{thm:main-result-Tr_n(1)}
For $\rho > \frac{3}{2}$ and any $\delta >0$, 
\begin{align}
 \sum_{n\leq x}{\mathrm{Tr_{n}(1)}(x-n)^{\rho}} = o(x^{\rho+1+\delta})
\end{align}
where implied constant depends on $\rho$.
\end{corollary}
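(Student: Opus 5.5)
The plan is to avoid the summation formula altogether and use only the polynomial growth of the coefficients $c_F(n)=\mathrm{Tr_n(1)}$. In the discussion preceding the summation formula for $\mathrm{Tr_n(1)}$, the paper records (citing \cite{DukeImamogluToth}) that one may take $\mu_F^{3}=\epsilon$ for every $\epsilon>0$, i.e.\ $\mathrm{Tr_n(1)}=\mathcal{O}_\epsilon(n^{\epsilon})$. I will use this fact in the form
\begin{align*}
|\mathrm{Tr_n(1)}| \le C_\epsilon\, n^{\epsilon} \qquad (n\ge 1).
\end{align*}

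Given $\delta>0$, fix some $\epsilon$ with $0<\epsilon<\delta$, for instance $\epsilon=\delta/2$. For $1\le n\le x$ we have $0\le x-n\le x$, so $(x-n)^{\rho}\le x^{\rho}$, since $\rho>0$. Applying the triangle inequality termwise gives
\begin{align*}
\Big|\sum_{n\le x}\mathrm{Tr_n(1)}(x-n)^{\rho}\Big| \le C_\epsilon\, x^{\rho}\sum_{n\le x} n^{\epsilon} \le C_\epsilon\, x^{\rho+1+\epsilon}.
\end{align*}
Dividing by $x^{\rho+1+\delta}$ leaves $C_\epsilon x^{\epsilon-\delta}$, which tends to $0$ as $x\to\infty$. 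This proves the claimed $o(x^{\rho+1+\delta})$ bound. The implied dependence is only on $\rho$ and $\delta$ through $\epsilon$; in fact this argument works for every $\rho>0$, so the hypothesis $\rho>\frac32$ is harmless.

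I would add a remark that the natural alternative, namely deducing the corollary from \eqref{eq:asympt-trace-Hurwitz} by moving the Hurwitz term to the right-hand side, does not give anything better. Since $H(n)/\sqrt{n}\asymp n^{1/2+o(1)}$ on average and $g_\rho(n,x)$ is bounded for $n\le x$, the Hurwitz contribution can be as large as $x^{\rho}\cdot x^{3/2}$. That would only yield $\mathcal{O}(x^{\rho+3/2+\epsilon})$ unless one proves cancellation in the Hurwitz sum. The only real point to check is therefore the input bound $\mathrm{Tr_n(1)}\ll n^{\epsilon}$. I would justify it from the class number formula: $h(d)\log\epsilon_d\ll d^{1/2+\epsilon}$ by the Dirichlet class number formula and $L(1,\chi_d)\ll\log d$. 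Summing over the $\ll d^{\epsilon}$ divisors $\ell^2\mid d$ in the formula of the first remark after Theorem \ref{thm:DIT-form}, and using the explicit square-index values, then gives $\mathrm{Tr_n(1)}\ll n^{\epsilon}$.
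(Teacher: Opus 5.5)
Your argument is correct, but it is not the paper's route. The paper deduces the corollary from the asymptotic formula \eqref{eq:asympt-trace-Hurwitz}. It moves the Hurwitz term to the other side and bounds $\sum_{n\le x}\frac{H(n)}{\sqrt{n}}g_\rho(n,x)\ll x^{1+\epsilon}$ by Lemma 4.4 of \cite{bdgrt}, using $H(n)/\sqrt{n}\ll n^{\epsilon}$. Combining this with the main term of size $x^{\rho+1}$ and the error $\mathcal{O}(x^{\rho+\frac12+\epsilon})$ gives $\mathcal{O}(x^{\rho+1+\epsilon})$.

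You instead bound the $\mathrm{Tr_n(1)}$ side pointwise and sum trivially. This reaches the same $\mathcal{O}(x^{\rho+1+\epsilon})$ without the summation formula, and for every $\rho>0$. Your route is the more economical one. The summation formula is only available for $\rho>\rho_0-\frac12$ with $\rho_0=2+2\max_i\{\mu_F^{i},\mu_G^{i}\}$. The paper reaches the range $\rho>\frac32$ only by taking $\mu_F^{3}=\epsilon$, that is, by assuming exactly the bound $\mathrm{Tr_n(1)}\ll n^{\epsilon}$ from which your two-line estimate already finishes. So the paper's derivation adds nothing beyond showing the corollary is consistent with the summation formula.

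Your closing remark about that alternative route is wrong, however. Since $H(n)\ll n^{\frac12+\epsilon}$, one has $H(n)/\sqrt{n}\ll n^{\epsilon}$, not $n^{\frac12+o(1)}$; you appear to have conflated $H(n)$ with $H(n)/\sqrt{n}$. With $g_\rho(n,x)$ bounded for $n\le x$, the Hurwitz term is $x^{\rho}\cdot\mathcal{O}(x^{1+\epsilon})$. Moving it to the other side of \eqref{eq:asympt-trace-Hurwitz} therefore gives $\mathcal{O}(x^{\rho+1+\epsilon})$, the same as your bound, and this is precisely the paper's proof. The error does not affect your main argument, but that paragraph should be corrected or dropped.
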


\begin{proof}
By Lemma 4.4 in \cite{bdgrt}, since $\frac{H(|n|)}{\sqrt{|n|}} \ll n^{\epsilon}$, we have $\sum_{n\leq x} {\frac{H(|n|)}{\sqrt{|n|}}g_{\rho}(n,x)} \ll \mathcal{O}(x^{\alpha}) $ \ as $x$ tends to infinity for $\alpha> 1$. 
Applying this bound to the formula \eqref{eq:asympt-trace-Hurwitz} yields that for any $\epsilon >0$, 
\begin{align*}
\frac{1}{\Gamma(\rho +1)} \sum_{n\leq x}{\mathrm{Tr_{n}(1)}(x-n)^{\rho}} =\mathcal{O}(x^{\rho+1+\epsilon}).
\end{align*}
\end{proof}

 \bibliographystyle{alpha}
\bibliography{references.bib} 
\end{document}